\numberwithin{equation}{section} 
\providecommand\given{\nonscript\;\delimsize|\nonscript\;\mathopen{}}
\DeclarePairedDelimiterX\set[1]\{\}{#1}
\DeclarePairedDelimiterX\seq[1](){#1}
\DeclarePairedDelimiterX\dual[2]{\langle}{\rangle}{#1,#2}
\DeclarePairedDelimiterX\innerprod[2](){#1,#2}
\DeclarePairedDelimiter\abs{\lvert}{\rvert}
\DeclarePairedDelimiter\norm{\lVert}{\rVert}
\DeclarePairedDelimiter\parens()
\newcommand\N{\mathbb{N}}
\newcommand\R{\mathbb{R}}
\renewcommand\d{\mathop{}\!\mathrm{d}}
\newcommand\dx{\d x}
\newcommand\dt{\d t}
\newcommand\HH{\mathcal{H}}
\newcommand\MM{\mathcal{M}}
\DeclareMathOperator{\cl}{cl}
\DeclareMathOperator\interior{int}
\DeclareMathOperator\divergence{div}
\DeclareMathOperator\conv{conv}
\DeclareMathOperator\diag{diag}
\DeclareMathOperator\supp{supp}
\newcommand{\smallO}{\mathpzc{o}}
\newcommand\Uad{U_{\mathrm{ad}}}
\DeclareMathAlphabet{\mathpzc}{OT1}{pzc}{m}{it}
\newcommand\oo{\mathpzc{o}}
\begin{document}
\title{
	Numerical solution of
	optimal control problems
	using quadratic transport regularization
}

\author{%
	Nicolas Borchard%
	\thanks{%
		Brandenburgische Technische Universität Cottbus-Senftenberg,
		Institute of Mathematics,
		03046 Cottbus, Germany,
		\email{nicolas.borchard@b-tu.de}%
	}
	\orcid{0009-0007-7358-7737}%
	\and
	Gerd Wachsmuth%
	\thanks{%
		Brandenburgische Technische Universität Cottbus-Senftenberg,
		Institute of Mathematics,
		03046 Cottbus, Germany,
		\email{gerd.wachsmuth@b-tu.de}%
	}
	\orcid{0000-0002-3098-1503}%
}

\maketitle

\begin{abstract}
	We address optimal control problems on the space of measures for an objective containing a
	smooth functional and an optimal transport regularization.
	That is,
	the quadratic Monge-Kantorovich distance between a given prior measure
	and the control
	is penalized in the objective.
	We consider optimality
	conditions and reparametrize the problem using the celebrated structure theorem by Brenier.
	The optimality conditions can be formulated as a piecewise differentiable equation.
	This is utilized to formulate solution algorithms
	and to analyze their local convergence properties.
	We present a numerical example to illustrate the theoretical findings.
\end{abstract}

\begin{keywords}
	optimal transport regularization,
	semismooth Newton method,
	measure control
\end{keywords}

\begin{MSCcodes}
	49M15,
	49M29,
	49K20
\end{MSCcodes}

\section{Introduction}

We are interested in optimal control problems
governed by a partial differential equation (PDE)
in which the control is given by a measure.
In the literature,
there are many works which
address this topic, we refer exemplarily to
\cite{ClasonKunisch2011,CasasClasonKunisch2012,PieperVexler2013}.
In these contributions,
the Radon norm is used to regularize the control.
In contrast, we consider the regularization by
adding a quadratic Monge-Kantorovich distance to a given prior measure $u_d$.
This distance measures the transport costs if the given measure $u_d$
is transported (in an optimal way) to the control $u$.
We refer to \cite{Villani2003,Santambrogio,FigalliGlaudo2023}
for an introduction to the field of optimal transport.

To motivate this class of problems,
we consider a situation in which the given measure $u_d$
describes the spatial distribution of a certain resource.
It should be transported to some new, unknown position modeled
by a measure $u$.
Consequently, $u$ influences some physical process described by an operator $S$.
The outcome $S(u)$ should be close to a given, desired state $y_d$
and this has to be balanced in an optimal way with the transport costs.

This leads to the problem
\begin{equation*}
	\label{eq:OCP}
	\tag{OCP}
	\text{Minimize} \quad \frac12 \norm{S(u) - y_d}_{L^2(O)}^2 + \frac\alpha2 W_2^2(u_d, u)
	\quad
	\text{with respect to } u \in \mathcal{M}(F)
	.
\end{equation*}
Here,
$S \colon \mathcal{M}(F) \to L^2(O)$
is the control-to-state map
and
$W_2^2(u_d, u)$ is the squared quadratic Monge-Kantorovich distance
between the given measure $u_d \in \mathcal{M}(D)$
and the control $u \in \mathcal{M}(F)$,
see \eqref{eq:monge_kantorovich} below.
Concerning the given prior,
we assume that $u_d$ is absolutely continuous
w.r.t.\ the Lebesgue measure.
Moreover, $y_d \in L^2(O)$ is a given desired state
and $\alpha > 0$
balances the tracking term and the transport costs.
The sets $O, F, D$ are subsets of $\R^2$.

As a concrete example,
let $u_d$ be the distribution of some waste.
After it has been transported to its new position $u$,
the waste enters the convection-diffusion equation
\begin{equation}
	\label{eq:PDE}
	\mathopen{} 
	-\Delta y + \beta \cdot \nabla y = u \; \text{in }\Omega,
	\qquad
	y = 0 \; \text{on }\partial\Omega.
\end{equation}
Here, $y$ models the concentration of some chemicals
(exhaled from the distribution $u$ of the waste)
in the air.
The spread of these chemicals is subject to diffusion
and convection due to the wind speed represented by the vector field $\beta$. 
The goal is to minimize
the concentration $y$ in a certain critical area $O$, i.e., $y_d \equiv 0$.
Finally, the domain $\Omega \subset \R^2$ is chosen large enough
such that the (artificial) boundary conditions
do not have a large impact on the resulting $y$ in the observation area $O$.

The aim of this paper is the numerical solution of problems of the type \eqref{eq:OCP}.
In general, this is an infinite-dimensional optimization problem.
Hence, it has to be discretized at some point in order to be solved numerically.
We propose to use a first-discretize-then-optimize approach, i.e.,
we discretize the control variable $u$.
A possible discretization of $u$ is given by a linear combination of Dirac measures
at fixed points $a_1, \ldots, a_n \in \R^2$.
After this discretization,
we still have a problem of type \eqref{eq:OCP}
with the choice $F := \set{a_1, \ldots, a_n}$.
Note that this implies that the space of measures $\mathcal{M}(F)$
can be identified with $\R^n$,
i.e.,
the vector $v \in \R^n$ is identified with the linear combination
$u = \sum_{i = 1}^n v_i \delta_{a_i} \in \mathcal{M}(F)$ of Dirac measures.
Consequently,
\eqref{eq:OCP}
becomes a finite-dimensional optimization problem.

For the following discussion,
it is beneficial to write \eqref{eq:OCP} in the slightly more general form
\begin{equation*}
	\tag{P}
	\label{P}
	\text{Minimize} \quad
	g(u) + \frac \alpha 2 W_2^2(u_d, u)
	\quad
	\text{with respect to } u \in \mathcal{M}(F),
\end{equation*}
with objective function
$g \colon \mathcal{M}(F) \to \R$
or $g \colon \R^n \to \R$, owing to the identification $\mathcal{M}(F) \cong \R^n$.

Since $u$ is a discrete measure and since $u_d$
is assumed to be absolutely continuous,
the computation of $W_2^2(u_d, u)$
(for a fixed $u$)
is a so-called semi-discrete optimal transport problem.
Numerical methods for such problems are given in, e.g.,
\cite{Merigot2011,KitagawaMerigotThibert2019,TaskesenShafieezadehAbadehKuhn2022}.
In particular,
\cite{KitagawaMerigotThibert2019}
proves global convergence of a damped Newton method.
This being said,
even the evaluation of $W_2^2(u_d, u)$
(for fixed $u$)
is computationally expensive,
see the comment after
\eqref{eq:eval_transport_dist},
and this renders \eqref{P} a challenging problem,
see also the discussion at the beginning of \cref{sec:algorithms}.

We are not aware of any contributions
concerning solution methods
which are directly applicable to \eqref{P}.
Closest to our work are
\cite{Bansil2021,BansilKitagawa2021}.
Therein, the authors study numerical methods for \eqref{P}
with a structured $g$.
In \cite{BansilKitagawa2021},
$g$ is assumed to be the indicator function of a box in $\R^n$,
i.e., this is a constraint on the coefficient vector $v$ of the measure $u$.
In \cite{Bansil2021},
the function $g$ has to be separable,
i.e., $g(v) = \sum_{i = 1}^n g_i(v_i)$
and the component functions $g_i$ have to possess a very special structure,
see \cite[Theorem~2.7]{Bansil2021}.
In both papers,
the authors prove that a Newton-like method converges globally
and locally with a superlinear rate.
In \cite{BansilKitagawa2021}, some numerical results are presented,
but no numerical experiments were performed in \cite{Bansil2021}.
The case $g \equiv 0$ is studied in
\cite{DeGournayKahnLebrat2018,LebratDeGournayKahnWeiss2019},
but the underlying set of measures $u$ is more general,
i.e.,
the positions $a_i$ itself are optimization variables
and there might be constraints on the positions $a_i$
and on the masses $v_i$ in the measure $u$,
see \cite[(2.1), (2.2)]{LebratDeGournayKahnWeiss2019}.
Finally,
we mention that in all of these papers, more general transport costs were considered.

As already said,
our goal is the design
of efficient numerical methods for the solution of
\eqref{eq:OCP} and \eqref{P}.
To this end,
theoretical considerations are necessary.
In particular,
our main contributions are
\begin{enumerate}
	\item
		reformulation of the optimality conditions as $r(\bar \xi) = 0$
		with $r \colon \R^n \to \R^n$, see \cref{lem:optimality of xi on J},
	\item
		verification of semismoothness of $r$,
		see \cref{thm:J_r_tilde_u_W_PC1_semismooth},
	\item
		convergence results for a fixed-point algorithm,
		see \cref{thm:conv_ODE,thm:gradient_method_constant_stepsize_R_linear},
		and for a semismooth Newton method,
		see \cref{thm:convergence of Newton method on r}.
\end{enumerate}

Our findings can also be applied to
semi-discrete optimal transport problems with storage fees, see \cite{Bansil2021,BansilKitagawa2021},
and to
semi-discrete optimal transport problems with queue penalization, see \cite{CrippaJimenezPratelli2009},
if the data of these problems is regular enough
such that they can be formulated as \eqref{P}
with a twice continuously differentiable function $g$.

The outline of the paper is as follows.
\Cref{sec:preliminaries} addresses preliminaries.
In particular,
we fix some notations and the standing assumptions (\cref{subsec:notation_assumptions}),
provide optimality conditions for \eqref{P} (\cref{subsec:existence_and_optimality}),
and recall concepts of generalized differentiability (\cref{subsec:concepts_of_differentiability}).
In \cref{sec:properties_Pxi}
we investigate a reformulation \eqref{P(xi)} of \eqref{P}.
In particular, we provide the semismoothness of the involved functions, see
\cref{thm:J_r_tilde_u_W_PC1_semismooth}.
In combination with the optimality conditions from \cref{subsec:existence_and_optimality},
this gives rise to the algorithms studied in \cref{sec:algorithms}.
We consider a fixed-point iteration and a semismooth Newton method
and provide their local convergence,
see \cref{subsec:fixed_point}
and \cref{subsec:Newton on r}, respectively.
Finally,
the numerical experiments of \cref{sec:numerics}
illustrate the theoretical findings.

\section{Preliminaries}
\label{sec:preliminaries}

\subsection{Notation and assumptions}
\label{subsec:notation_assumptions}
Throughout the paper,
we denote by $\mathcal{M}(B)$
the set of (signed) Borel measures on a compact set $B \subset \R^2$
and $\delta_b$ is the Dirac measure of a point $b \in \R^2$.
By the Riesz representation theorem,
we have that $\mathcal{M}(B)$ is the dual space
of the set of continuous functions $C(B)$.

For convenience,
we summarize our assumptions
on problem \eqref{P}.
We assume
\begin{enumerate}[label=(A\arabic*)]
	\item
		$F = \set{a_1, \ldots, a_n} \subset \R^2$
		is a finite set and all $a_i$ are pairwise distinct,
	\item
		$D \subset \R^2$ is a convex, compact polygon with nonempty interior,
		i.e., a full-dimensional (convex) polytope,
	\item
		\label{item:g_C2}
		$g \colon \mathcal{M}(F) \to \R$ is twice continuously differentiable,
	\item
		$\alpha > 0$ and $W_2^2$ is the squared quadratic Monge-Kantorovich distance, see \eqref{eq:monge_kantorovich} below,
	\item
		$u_d \in \mathcal{M}(D)$ is a given nonnegative measure
		which is absolutely continuous w.r.t.\ the Lebesgue measure
		and its density $\varrho$ is assumed to be continuous on $D$.
\end{enumerate}
The squared quadratic Monge-Kantorovich distance
$W_2^2(u_d, u)$
between the given measure $u_d \in \mathcal{M}(D)$
and the control $u \in \mathcal{M}(F)$
is defined via
\begin{equation}
	\label{eq:monge_kantorovich}
	W_2^2(u_d, u)
	:=
	\inf\set*{
		\int_{D \times F} \abs{ x_1 - x_2 }^2 \d\gamma(x_1, x_2)
		\given
		\gamma \in \Gamma( u_d, u )
	}
	,
\end{equation}
where
the set $\Gamma(u_d, u)$
of couplings between $u_d$ and $u$
is given by
\begin{equation*}
	\Gamma( u_d, u )
	:=
	\set{
		\gamma \in \mathcal{M}( D \times F )
		\given
		\gamma \ge 0, \;
		\pi_1 \mathbin{\#} \gamma = u_d, \;
		\pi_2 \mathbin{\#} \gamma = u
	}
	.
\end{equation*}
Here, $\pi_1 \colon D \times F \to D$
and $\pi_2 \colon D \times F \to F$ are the projections
and ``$\#$'' denotes the push-forward of measures.

Note that the definition of the transport distance
yields
that $W_2^2(u_d, u) < \infty$
implies
the nonnegativity $u \ge 0$
and the equality of total masses $u(F) = u_d(D)$.
Therefore,
\eqref{P} implicitly includes
the constraint
\begin{equation*}
	u
	\in
	\Uad
	:=
	\set*{u \in \mathcal{M}(F) \given u \ge 0, \; u(F) = u_d(D)}
	.
\end{equation*}
Since $F$ is a finite set,
the space of measures $\mathcal{M}(F)$
is finite dimensional
and we identify it with $\R^n$.
Consequently,
$\Uad = \set*{\sum_{i=1}^n v_i \delta_{a_i} \given v_i \ge 0, \sum_{i=1}^n v_i = u_d(D)}$
is identified
with the set of vectors
$\set{ v \in \R^n \given v \ge 0, \sum_{i = 1}^n v_i = u_d(D)}$.
Similarly,
we also write $g(v)$ and $W_2^2(u_d, v)$ for a vector $v$ from this set.
Note that assumption \ref{item:g_C2} is equivalent to $g \in C^2(\R^n)$
under this identification.

For dealing with the problem \eqref{eq:OCP},
we set $g(u) := \frac12 \norm{S(u) - y_d}_{L^2(O)}^2$.
If the solution operator $S \colon \MM(F) \to L^2(O)$
is twice continuously differentiable,
we get
$g \in C^2(\mathcal{M}(F))$ via the chain rule.

Using standard arguments,
see, e.g., \cite[Section~2]{ClasonKunisch2011},
one can show (under appropriate assumptions on the data)
that the PDE \eqref{eq:PDE}
leads to a suitable solution operator $S \colon \MM(F) \to L^2(O)$.
In particular, this requires some regularity of the domain $\Omega$,
e.g., the boundary is $C^{1,\delta}$, $\delta \in (0,1]$, or polygonal.

\subsection{Existence of solutions and optimality conditions}
\label{subsec:existence_and_optimality}
First, we show that problem \eqref{P} possesses a solution.
\begin{theorem}
	\label{thm:uniqueness}
	There exists a global solution of \eqref{P}. If $g$ is convex, the solution is unique.
\end{theorem}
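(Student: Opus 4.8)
The plan is to reduce \eqref{P} to a finite-dimensional optimization over a compact convex set and then to use the direct method for existence and strict convexity for uniqueness. As noted above, $W_2^2(u_d,u)<\infty$ forces $u\in\Uad$, so it suffices to minimize $J(u):=g(u)+\tfrac\alpha2 W_2^2(u_d,u)$ over $\Uad$. By (A1) we identify $\mathcal{M}(F)$ with $\R^n$ and $\Uad$ with the scaled simplex $\set{v\in\R^n \given v\ge 0,\ \sum_i v_i=u_d(D)}$; using $u_d\ge 0$ from (A5), this set is nonempty (it contains the constant vector $v_i=u_d(D)/n$), convex and compact. On it, $J$ is finite: $g$ is real-valued and continuous by (A3), and $W_2^2(u_d,v)<\infty$ for every $v\in\Uad$ since $v,u_d\ge 0$ have equal total mass.

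For existence I would show that $v\mapsto W_2^2(u_d,v)$ is lower semicontinuous on $\Uad$, so that $J$ is lower semicontinuous and attains its infimum on the compact set $\Uad$ by the Weierstrass theorem. To this end, let $v^k\to v$ in $\Uad$, so that the associated measures converge weakly-$\star$ in $\mathcal{M}(F)$, and pick optimal couplings $\gamma^k\in\Gamma(u_d,v^k)$. These have total mass $u_d(D)$ and are supported on the fixed compact set $D\times F$, hence a subsequence converges weakly-$\star$ to some $\gamma$; weak-$\star$ continuity of the marginal maps gives $\gamma\in\Gamma(u_d,v)$, and since $(x_1,x_2)\mapsto\abs{x_1-x_2}^2$ is continuous and bounded on $D\times F$ we obtain $W_2^2(u_d,v)\le\int\abs{x_1-x_2}^2\d\gamma=\liminf_k W_2^2(u_d,v^k)$ after passing to a subsequence realizing the lower limit. (Alternatively, one may simply invoke the standard lower semicontinuity of optimal transport costs.)

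For uniqueness under convexity of $g$, I would prove that $v\mapsto W_2^2(u_d,v)$ is \emph{strictly} convex on $\Uad$; together with convexity of $g$ and $\alpha>0$ this makes $J$ strictly convex, hence with at most one minimizer, which combined with existence yields the claim. Convexity follows from gluing couplings: if $\gamma_0,\gamma_1$ are optimal for $(u_d,v_0),(u_d,v_1)$ and $t\in(0,1)$, then $(1-t)\gamma_0+t\gamma_1\in\Gamma(u_d,(1-t)v_0+tv_1)$ yields $W_2^2(u_d,(1-t)v_0+tv_1)\le(1-t)W_2^2(u_d,v_0)+tW_2^2(u_d,v_1)$. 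If equality held for some $v_0\neq v_1$ and some $t\in(0,1)$, then $(1-t)\gamma_0+t\gamma_1$ would be an optimal coupling for $(u_d,v_t)$ with $v_t=(1-t)v_0+tv_1$. Here the absolute continuity of $u_d$ from (A5) is essential: by Brenier's theorem the optimal coupling of $(u_d,v_t)$ is unique and induced by a map, i.e.\ its disintegration with respect to $u_d$ is $u_d$-a.e.\ a Dirac mass; but $\gamma_0,\gamma_1$ are induced by Brenier maps $T_0,T_1$, so the disintegration of $(1-t)\gamma_0+t\gamma_1$ is $x\mapsto(1-t)\delta_{T_0(x)}+t\,\delta_{T_1(x)}$, which is a Dirac mass for $u_d$-a.e.\ $x$ only if $T_0=T_1$ $u_d$-a.e., forcing $v_0=T_0\mathbin{\#}u_d=T_1\mathbin{\#}u_d=v_1$, a contradiction. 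The main obstacle is precisely this strict convexity step, where the absolute continuity of the prior enters through uniqueness of Brenier maps; without it $W_2^2(u_d,\cdot)$ is in general only convex and uniqueness may fail.
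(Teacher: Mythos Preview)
Your proof is correct and follows the same strategy as the paper: compactness of $\Uad$ plus (lower semi)continuity of the objective for existence via Weierstra\ss, and strict convexity of $W_2^2(u_d,\cdot)$ under absolute continuity of $u_d$ for uniqueness. The only difference is that the paper asserts continuity of the objective and cites \cite[Proposition~7.19]{Santambrogio} for strict convexity, whereas you supply self-contained arguments for both (lower semicontinuity via stability of couplings, and strict convexity via Brenier's uniqueness of optimal maps); your Brenier-based argument is in fact precisely the proof of that cited proposition.
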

\begin{proof}
	Obviously,
	$\Uad$ is the image of the compact set
	$\set{v \in \R^n \given v \ge 0, \sum_{i=1}^n v_i = u_d(D)}$
	under the continuous map
	$\R^n \ni v \mapsto \sum_{i=1}^n v_i \delta_{a_i} \in \mathcal{M}(F)$.
	Consequently, $\Uad$ is compact in $\mathcal{M}(F)$.
	Weierstraß' theorem yields a minimizer as
	the objective functional is continuous.

	Now let $g$ additionally be convex.
	Since $u_d$ is absolutely continuous w.r.t.\ the Lebesgue measure,
	the functional $W_2^2(u_d, \cdot)$ is strictly convex,
	see \cite[Proposition~7.19]{Santambrogio}.
	Consequently,
	the objective of \eqref{P} is strictly convex.
	Hence, the minimizer is unique.
\end{proof}
Next, we address first-order optimality conditions.
Since the set $F$ is finite,
the space $\mathcal{M}(F)$ is finite dimensional.
This allows us to identify the derivative $g'(u)$ for $u \in \mathcal{M}(F)$
with a continuous function from $C(F)$.
For a function $\psi \in C(F)$, we define its $c$-conjugate
$\psi^c \colon \R^2 \to \R$
via
\begin{equation*}
	\psi^c(x_1)
	:=
	\inf\set{ c(x_1,x_2) - \psi(x_2) \given x_2 \in F }
	=
	\inf\set{ c(x_1,a_i) - \psi(a_i) \given i = 1,\ldots,n }
	.
\end{equation*}
Note that this definition easily implies that $\psi^c$ is continuous.

In the next lemma,
we investigate the convex (pre)-conjugate
of the transport term from \eqref{P}.
Recall that $F$ is a finite set,
therefore the spaces $C(F)$ and $\MM(F)$
are finite-dimensional,
dual to each other and, thus, reflexive.

\begin{lemma}
	\label{lem:h for predual}
	The function $H \colon C(F) \to \R$ defined via
	$H(\varphi) := -\int_D \varphi^c \d u_d$
	is
	proper, convex and continuous.
	Moreover, the convex conjugate
	$H^*$ of $H$
	satisfies
	$H^* = W_2^2(u_d, \cdot)$.
\end{lemma}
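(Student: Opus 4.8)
The plan is to identify $H$ as (essentially) the Fenchel conjugate of the $W_2^2$-functional, invoked via Kantorovich duality, and then read off the claimed properties. First I would recall the Kantorovich duality for the quadratic cost: for a fixed $u \in \Uad$,
\begin{equation*}
	W_2^2(u_d, u)
	=
	\sup\set*{
		\int_D \varphi^c \d u_d + \int_F \varphi \d u
		\given
		\varphi \in C(F)
	}
	,
\end{equation*}
where $c(x_1,x_2) = \abs{x_1 - x_2}^2$ and $\varphi^c$ is the $c$-conjugate as defined above. (This is the standard form of duality for lower semicontinuous costs on compact spaces; since $F$ is finite and $D$ compact, the supremum is actually attained and there are no integrability issues.) Rewriting the right-hand side as $\sup_{\varphi \in C(F)} \bigl( \dual{u}{\varphi}_{\mathcal M(F), C(F)} - H(\varphi) \bigr)$ shows directly that $W_2^2(u_d, \cdot) = H^*$ on $\Uad$, where the pairing is the duality between $\mathcal M(F)$ and $C(F)$.

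Next I would establish the three regularity claims for $H$ itself. For convexity: the map $\varphi \mapsto \varphi^c(x_1)$ is a pointwise infimum of the affine functions $\varphi \mapsto c(x_1, a_i) - \varphi(a_i)$, hence concave in $\varphi$; integrating against the nonnegative measure $u_d$ preserves concavity, so $\varphi \mapsto \int_D \varphi^c \d u_d$ is concave and $H$ is convex. For continuity (indeed Lipschitz continuity), note that $\abs{\varphi^c(x_1) - \psi^c(x_1)} \le \norm{\varphi - \psi}_{C(F)}$ for every $x_1$, since both $c$-conjugates are infima over the same finite index set; integrating gives $\abs{H(\varphi) - H(\psi)} \le u_d(D)\,\norm{\varphi - \psi}_{C(F)}$, so $H$ is (globally Lipschitz, hence) continuous and in particular finite-valued, i.e. proper.

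Finally I would combine these. Since $H$ is convex, proper and continuous (hence lower semicontinuous) on the Banach space $C(F)$, the Fenchel--Moreau theorem gives $H^{**} = H$. On the other hand, $H^*$ is a function on $\mathcal M(F)$, and the duality computation above identifies it as $W_2^2(u_d, \cdot)$ — with the understanding that $H^*(u) = +\infty$ whenever $u \notin \Uad$, which matches the convention that $W_2^2(u_d, u) = +\infty$ for $u \ngeq 0$ or $u(F) \ne u_d(D)$, as noted after the assumptions. The main obstacle is making the duality step fully rigorous in the correct functional-analytic setting: one must be careful that the Kantorovich dual is posed over $C(F)$ (not, say, bounded measurable functions), that the $c$-conjugate $\varphi^c$ is the right object appearing there, and that the supremum defining $W_2^2$ via duality genuinely equals the primal infimum in \eqref{eq:monge_kantorovich} — this is where I would cite the standard optimal-transport duality theorem (e.g. from \cite{Santambrogio} or \cite{Villani2003}) rather than reprove it. The remaining verifications (convexity, Lipschitz bound) are elementary given the finiteness of $F$.
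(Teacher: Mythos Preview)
Your proposal is correct and follows essentially the same route as the paper: concavity of $\varphi \mapsto \varphi^c$ for convexity of $H$, then Kantorovich duality (cited from \cite{Santambrogio}) for $H^* = W_2^2(u_d,\cdot)$, with the case $u \notin \Uad$ handled separately. The only differences are cosmetic---the paper deduces continuity from convexity on the finite-dimensional space $C(F)$ rather than via your explicit Lipschitz estimate, and your invocation of Fenchel--Moreau ($H^{**}=H$) is not actually needed for the stated conclusion.
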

\begin{proof}
	Since $c$-conjugates are continuous,
	$H(\varphi) \in \R$ follows for all $\varphi \in C(F)$.
	Further, one can check that
	\begin{equation*}
		( \lambda \varphi_1 + (1 - \lambda) \varphi_2 )^c
		\ge
		\lambda \varphi_1^c
		+
		(1 - \lambda) \varphi_2^c
		,
	\end{equation*}
	see the proof of 
	\cite[Prop.~7.17]{Santambrogio}.
	This implies that $H$ is convex
	and, consequently, continuous.
	Further, for $u \in \mathcal{M}(F)$, it holds
	\[
		H^*(u)
		=
		\sup_{\varphi \in C(F)}
		\parens*{
			\int_F \varphi \d u + \int_D \varphi^c \d u_d
		}
		=
		W_2^2(u_d, u)
		.
	\]
	In case of $u \in \Uad$,
	the last equality
	is the famous Kantorovich duality, see
	\cite[Thm.~1.39]{Santambrogio}.
	In case $u \not\in \Uad$,
	one can easily check that the supremum
	evaluates to $\infty$
	and $W_2^2(u_d, u) = \infty$ as well.
\end{proof}

\begin{lemma}
	\label{lem:optimality condition of u on G}
	Let $\bar u$ be a local minimizer of \eqref{P}.
	Then,
	\begin{equation}
		\label{eq:optimality}
		\mathopen{} 
		-2g'(\bar u)/\alpha
		\in
		\set*{\psi \in C(F) \given \int_D \psi^c \d u_d + \int_F \psi \d \bar u = W_2^2(u_d,\bar u)}
		.
	\end{equation}
	If $g$ is convex,
	\eqref{eq:optimality} is sufficient for global optimality.
\end{lemma}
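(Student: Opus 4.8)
The plan is to derive the optimality condition by combining the Fermat rule for the sum of a smooth function and a convex function with the Fenchel–Young (equality) characterization of the subdifferential of the convex term. First I would recall from \cref{lem:h for predual} that $W_2^2(u_d,\cdot) = H^*$ is a proper, convex, lower semicontinuous functional on $\mathcal{M}(F) \cong \R^n$, so \eqref{P} has the form "minimize $g(u) + \tfrac\alpha2 H^*(u)$" with $g$ smooth. Since $\bar u$ is a local minimizer and $g$ is (continuously) differentiable while $\tfrac\alpha2 H^*$ is convex, the Fermat rule in finite dimensions gives $0 \in g'(\bar u) + \tfrac\alpha2 \partial H^*(\bar u)$, i.e.\ $-2 g'(\bar u)/\alpha \in \partial H^*(\bar u)$. (For a local minimizer of a smooth-plus-convex function in $\R^n$ this is standard: the directional derivative in every feasible direction is nonnegative, which forces $-g'(\bar u) \in \tfrac\alpha2\partial H^*(\bar u)$.)

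Next I would translate the inclusion $-2g'(\bar u)/\alpha \in \partial H^*(\bar u)$ into the explicit set appearing in \eqref{eq:optimality}. By the Fenchel–Young identity, for a proper convex lsc function $H$ on $C(F)$ with conjugate $H^*$ on $\mathcal{M}(F)$, one has $\psi \in \partial H^*(\bar u)$ if and only if $H(\psi) + H^*(\bar u) = \langle \psi, \bar u\rangle$, i.e.\ equality holds in the Young inequality. Plugging in $H(\psi) = -\int_D \psi^c \d u_d$, $H^*(\bar u) = W_2^2(u_d,\bar u)$, and $\langle\psi,\bar u\rangle = \int_F \psi \d\bar u$, the equality $H(\psi) + H^*(\bar u) = \langle\psi,\bar u\rangle$ becomes exactly
\begin{equation*}
	\int_D \psi^c(x_1)\d u_d(x_1) + \int_F \psi(x_2)\d\bar u(x_2) = W_2^2(u_d,\bar u),
\end{equation*}
which is precisely membership in the set on the right-hand side of \eqref{eq:optimality}. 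One small point to address is that $C(F)$ is the predual of $\mathcal{M}(F)$ rather than its dual, but since $F$ is finite both spaces are just $\R^n$ with the usual pairing, so the Fenchel–Young duality applies without subtlety. Combining this with the inclusion from the Fermat rule yields \eqref{eq:optimality}.

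For the sufficiency statement when $g$ is convex, I would argue directly: if $g$ is convex, then $u \mapsto g(u) + \tfrac\alpha2 W_2^2(u_d,u)$ is convex, and a point is a global minimizer of a convex function exactly when $0$ lies in its subdifferential. Since $g$ is differentiable, $\partial(g + \tfrac\alpha2 H^*)(\bar u) = g'(\bar u) + \tfrac\alpha2 \partial H^*(\bar u)$ by the Moreau–Rockafellar sum rule (valid here since $g$ is finite and continuous everywhere), so $0 \in g'(\bar u) + \tfrac\alpha2\partial H^*(\bar u)$ — which is equivalent to \eqref{eq:optimality} as just shown — is necessary and sufficient for global optimality.

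I do not expect a serious obstacle here; the only things requiring a little care are (a) justifying the Fermat-type inclusion for a local (not merely global) minimizer of a smooth-plus-convex objective, which follows from nonnegativity of one-sided directional derivatives together with convexity of $H^*$, and (b) invoking the sum rule for the subdifferential, which is unproblematic because $g$ is everywhere finite and continuous so no constraint qualification is needed. The identification of the Fenchel–Young equality set with the set in \eqref{eq:optimality} is then a direct substitution using \cref{lem:h for predual}.
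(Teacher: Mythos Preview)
Your proposal is correct and follows essentially the same route as the paper: apply the Fermat rule $0 \in g'(\bar u) + \tfrac{\alpha}{2}\partial H^*(\bar u)$ for the smooth-plus-convex objective, then rewrite $\partial H^*(\bar u)$ via the Fenchel--Young equality characterization and substitute the formulas from \cref{lem:h for predual}. Your write-up is in fact more explicit than the paper's about justifying the Fermat inclusion for a merely local minimizer and about the sum rule, but the underlying argument is identical.
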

\begin{proof}
	As $g$ is continuously differentiable and
	$W_2^2$ is convex,
	the optimality condition is
	$0 \in g'(\bar u) + \frac \alpha 2 \partial (W_2^2(u_d,\cdot))(\bar u)$.
	Rearranging this condition yields
	\begin{equation*}
		-2g'(\bar u) / \alpha
		\in
		\partial (W_2^2(u_d,\cdot))(\bar u)
		=
		\partial H^*(\bar u)
	\end{equation*}
	using the function $H$ from \cref{lem:h for predual}.
	From the characterization
	\begin{equation*}
		\partial H^*(\bar u)
		=
		\set{
			\varphi \in C(F)
			\given
			H^*(\bar u) + H(\varphi)
			=
			\dual{\bar u}{\varphi}
		}
	\end{equation*}
	of the subdifferential,
	we get the first claim.

	If $g$ is convex, it is clear that \eqref{eq:optimality}
	yields optimality.
\end{proof}

Brenier's theorem says that the optimal transport
from $u_d$ to $\bar u$
can be realized (under
some conditions) using a transport map which is the gradient of a
convex function $\varphi$.
Since $\bar u$ is supported on the finite set $F$,
the gradient of the convex function $\varphi$
only takes finitely many values
(outside a set of Lebesgue measure zero).
Consequently,
$\varphi$ is a finite supremum of affine functions
(with slopes $a_i$)
and this motivates the next definition.

\begin{definition}
	\label{def:varphi_xi}
	For a weight $\xi \in \R^n$, we define
	the function $\varphi_\xi \colon D \to \R$ via
	\[
		\varphi_\xi(x)
		:=
		\sup \set*{a_i^\top x - \xi_i \given i = 1, \ldots, n}
		.
	\]
	We define the \emph{dominating regions}
	\begin{equation*}
		D_i(\xi)
		:=
		\set*{x \in \interior(D) \given a_i^\top x - \xi_i > a_j^\top x - \xi_j \text{ for all } j \ne i}
		.
	\end{equation*}
\end{definition}
It is clear that $\varphi_\xi$ is differentiable on the open set $D_i(\xi)$
and we have
\begin{equation}
	\nabla \varphi_\xi(x) = a_i
	\quad\forall x \in D_i(\xi)
	.
\end{equation}
The set $D \setminus \bigcup_i D_i(\xi)$ is a Lebesgue null set and thus a $u_d$ null set.
Therefore, we can ignore these points
and use $\nabla\varphi_\xi$ as a transport map.
The measure $u_d$ is transported to
$\nabla \varphi_\xi \mathbin{\#} u_d = \sum_{i=1}^n u_d(D_i(\xi)) \delta_{a_i} \in \Uad$.
As above,
we identify this measure with the vector
\[
	\tilde u(\xi)
	=
	(u_d(D_1(\xi)), \ldots, u_d(D_n(\xi)))^\top
	\in \R^n
	.
\]
Since $\nabla \varphi_\xi$ is the transport map,
we have
\begin{equation}
	\label{eq:eval_transport_dist}
	W_2^2(u_d,\tilde u(\xi))
	=
	\int_D \abs{x - \nabla \varphi_\xi(x)}^2 \d u_d(x)
	.
\end{equation}
We mention that this formula also follows from
\cite[Corollary~1.2]{KitagawaMerigotThibert2019}
or
\cite[(3.3)--(3.5)]{LebratDeGournayKahnWeiss2019}.
At this point, we further mention that the only efficient possibility
for the evaluation of $W_2^2(u_d, u)$
(that we are aware of)
is to find a vector $\xi \in \R^n$ such that $u = \tilde u(\xi)$
and to compute the above integral.
This is the approach used in, e.g., \cite{KitagawaMerigotThibert2019}.

The above considerations lead to the problem
\begin{equation*}
	\tag{P\mbox{\(_\xi\)}}
	\label{P(xi)}
	\min_{\xi \in \R^n}
	j(\xi) + \frac \alpha 2 W(\xi)
\end{equation*}
where
$j(\xi) := g(\tilde u(\xi))$ and
$W(\xi) := W_2^2(u_d,\tilde u(\xi))$.

\begin{lemma}[Equivalence of \texorpdfstring{\eqref{P}}{(P)} and \texorpdfstring{\eqref{P(xi)}}{(Pxi)}]\leavevmode
	\label{lem:P and P(xi) equivalent}
	\begin{enumerate}
		\item
			\label{lem:P and P(xi) equivalent:i}
			Let $\xi \in \R^n$ be arbitrary. Then, $u := \nabla \varphi_\xi \mathbin{\#} u_d \in \Uad$.
		\item
			\label{lem:P and P(xi) equivalent:ii}
			Let $u \in \Uad$ be arbitrary. Then, there exists $\xi \in \R^n$
			such that $u = \nabla \varphi_\xi \mathbin{\#} u_d$.
	\end{enumerate}
	In both cases, the objective values of
	\eqref{P} and \eqref{P(xi)} coincide.
\end{lemma}
\begin{proof}
	\ref{lem:P and P(xi) equivalent:i}:
	This follows from the discussion above.

	\ref{lem:P and P(xi) equivalent:ii}:
	From \cite[Proposition~1.11]{Santambrogio}
	we get the existence of a solution $\psi \in C(F)$
	of the dual Kantorovich problem, i.e.,
	\begin{equation*}
		\int_D \psi^c(x_1) \d u_d(x_1)
		+
		\int_F \psi(x_2) \d u(x_2)
		=
		W_2^2(u_d, u)
		.
	\end{equation*}
	We define $\xi \in \R^n$ via
	\begin{equation}
		\label{eq:xi_from_psi}
		\xi_i := \frac12 \abs{a_i}^2 - \frac12 \psi(a_i)
		.
	\end{equation}
	This implies
	\begin{equation*}
		\varphi_\xi(x_1)
		=
		\sup\set*{
			x_1^\top a_i
			-
			\parens*{
				\frac12 \abs{a_i}^2
				-
				\frac12 \psi(a_i)
			}
			\given
			i = 1,\ldots, n
		}
		=
		\frac12 \abs{x_1}^2 - \frac12\psi^c(x_1)
		.
	\end{equation*}
	Let $\bar\gamma$ be the optimal transport plan
	from $u_d$ to $u$.
	From Kantorovich duality,
	we get that
	$\psi(x_2) + \psi^c(x_1) = c(x_1,x_2)$
	for all $(x_1,x_2) \in \supp(\bar\gamma)$.
	On the other hand, we have
	$\psi(x_2) + \psi^c(\hat x_1) \le c(\hat x_1,x_2)$
	for all $\hat x_1 \in \R^2$.
	By subtracting these two equations,
	we end up with
	\begin{equation*}
		\varphi_\xi(\hat x_1) \ge \varphi_\xi(x_1) + x_2^\top(\hat x_1 - x_1)
		\qquad
		\forall (x_1,x_2) \in \supp(\bar\gamma), \hat x_1 \in \R^2.
	\end{equation*}
	This shows $\supp(\bar\gamma) \subset \partial\varphi_\xi$.
	From the proof of Brenier's theorem \cite[Theorem~2.5.10]{FigalliGlaudo2023},
	it follows that $u = \nabla\varphi_\xi \mathbin{\#} u_d$.

	In both cases,
	the above construction ensures
	$u = \tilde u(\xi)$.
	This yields
	$j(\xi) = g(\tilde u(\xi)) = g(u)$
	and
	$W(\xi) = W_2^2(u_d, \tilde u(\xi)) = W_2^2(u_d, u)$,
	i.e., the objective values coincide.
\end{proof}

We mention that the vector $\xi$
constructed in
\ref{lem:P and P(xi) equivalent:ii}
is not unique.
First, we can add constants to $\xi$
without changing $\nabla\varphi_\xi$.
Moreover, if $D_i(\xi)$ is empty
for some index $i$,
we can change $\xi_i$ without affecting $\varphi_\xi$.

\begin{theorem}[Optimality condition]
	\label{lem:optimality of xi on J}
	For every $\bar u$ satisfying \eqref{eq:optimality},
	there exists a unique
	$\bar \xi \in \R^n$ with
	$\bar u = \tilde u (\bar \xi)$
	and
	$r(\bar \xi) = 0$,
	where
	\[
		r(\xi)
		:=
		A + \frac 1 \alpha \nabla g(\tilde u(\xi)) - \xi
		\qquad
		\text{and}
		\qquad
		A
		:=
		\frac 1 2
		\left(
			\abs{a_1}^2,
			\hdots,
			\abs{a_n}^2
		\right)^\top
		.
	\]
	Moreover, if $\bar \xi$ with $r(\bar \xi) = 0$
	is given,
	$\bar u := \tilde u(\bar \xi)$ satisfies \eqref{eq:optimality}.

	If $g$ is additionally convex, there exists a unique zero $\bar \xi \in \R^n$ of $r$ which is a minimizer
	of \eqref{P(xi)}.
\end{theorem}
\begin{proof}
	Let $\bar u \in \Uad$ satisfy \eqref{eq:optimality}.
	Then, $\psi := -2 g'(\bar u) / \alpha$
	is a solution of the Kantorovich dual problem.
	We define $\bar \xi$ as in \eqref{eq:xi_from_psi}
	and this yields
	$r(\bar \xi) = 0$.
	If we also have $\bar u = \tilde u(\xi)$ and $r(\xi) = 0$
	for some $\xi \in \R^n$,
	we get
	\begin{equation*}
		\bar \xi
		=
		A + \frac1\alpha \nabla g(\tilde u(\bar \xi))
		=
		A + \frac1\alpha \nabla g(\tilde u(\xi))
		=
		\xi
	\end{equation*}
	and this shows uniqueness.

	The uniqueness assertion in the convex case
	follows with \cref{thm:uniqueness}
	and \cref{lem:optimality condition of u on G}.
\end{proof}

Note that the approach of \cref{def:varphi_xi}
is related to the concept of power diagrams
(also known as Laguerre tessellations)
which is used in \cite{Merigot2011,KitagawaMerigotThibert2019}.
Indeed, we get
\begin{equation*}
	D_i(\xi)
	=
	\set*{x \in \interior(D) \given
		\frac12 \abs{x - a_i}^2 + \xi_i - \frac12 \abs{a_i}^2
		<
		\frac12 \abs{x - a_j}^2 + \xi_j - \frac12 \abs{a_j}^2
		\;\forall j \ne i
	}
	.
\end{equation*}

\subsection{Generalized concepts of differentiability}
\label{subsec:concepts_of_differentiability}

The functions $j$, $W$, $\tilde u$, and $r$,
which were defined using $\varphi_\xi$
are, in general,
not differentiable.
In the next section,
we will see that these functions are
amenable
to generalized concepts of differentiability.
In particular, we will use
the notion of
$PC^1$ functions.
We recall these concepts following \cite{Semismooth}.

\begin{definition}
	[\texorpdfstring{\cite[Def.~2.1]{Semismooth}}{}]
	\label{def:Clarke-Ableitung}
	Let $V \subset \R^n$ be open and $f: V \to \R^m$ be Lipschitz continuous near $x \in V$.
	Let $D_f := \set{x \in V \given f \text{ is differentiable at } x}$.
	The set
	\[
		\partial^C f(x)
		:=
		\conv(\set{M \in \R^{m \times n} \given \exists \seq{x_k} \subset D_f : x_k \to x, f'(x_k) \to M})
	\]
	is called Clarke's generalized Jacobian of $f$ at $x$.
\end{definition}
Note that
Clarke's generalized Jacobian is denoted by $\partial f$
in \cite{Semismooth}.
In order to avoid confusion with the convex subdifferential,
we denote it by $\partial^C f$.

\begin{definition}
	[\texorpdfstring{\cite[Def.~2.19]{Semismooth}}{}]
	A function $f : V \to \R^m$ defined on the open set $V \subset \R^n$ is called
	$PC^k$-function (``P'' for piecewise), $1 \le k \le \infty$, if f is continuous and if at every point
	$x_0 \in V$ there exist a neighborhood $U \subset V$ of $x_0$ and a finite collection of $C^k(U)$-functions
	$f^i : U \to \R^m$, $i = 1, \ldots, N$ , such that
	\[
		\forall x \in U:
		\qquad
		f(x)
		\in
		\set{f^1(x), \ldots, f^N(x)}
		.
	\]
	We say that $f$ is a continuous selection of $\set{f^1, \ldots, f^N}$ on $U$.
	The set
	$I(x) = \set{i \given f(x) = f^i(x)}$
	is the active set at $x \in U$, and
	\[
		I^e(x)
		=
		\set{i \in I(x) \given x \in \cl(\interior(\set{y \in U \given f(y) = f^i(y)}))}
	\]
	is the essentially active index set at $x$.
\end{definition}

In the next lemma, $f'(x; y)$ denotes the directional derivative of a function $f$
at the point $x$ in direction $y$,
i.e., $f'(x; y) = \lim_{t \searrow 0} (f(x + t y) - f(x))/t$.

\begin{lemma}
	[\texorpdfstring{\cite[Props.~2.24, 2.25]{Semismooth}}{}]
	\label{lem:PC1-functions:form of derivative}
	Let the $PC^1$-function $f: V \to \R^m$, $V \subset \R^n$ open, be a continuous selection of the $C^1$-functions $\set{f^1, \ldots, f^N} \in C^1(U)$ in a neighborhood $U$ of $x \in V$.
	Then
	$\partial^C f(x) = \conv \set{(f^i)'(x) \given i \in I^e(x)}$
	and
	for all $y \in \R^n$ we have
	$f'(x;y) \in \set{(f^i)'(x)y \given i \in I^e(x)}$.
	Further, if $f$ is differentiable at $x$, then
	$f'(x) \in \set{(f^i)'(x) \given i \in I^e(x)}$.
\end{lemma}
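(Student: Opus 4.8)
The plan is to prove the three assertions in the reverse of the stated order, building everything on the directional derivative. First I would record notation and two topological facts. Write $U_i := \set{y \in U \given f(y) = f^i(y)}$; these are closed in $U$ since $f,f^i$ are continuous, they cover $U$, and on $\interior(U_i)$ we have $f \equiv f^i$, so $f$ is there differentiable with derivative $(f^i)'$. By Baire's theorem $\bigcup_i \interior(U_i)$ is dense in $U$ (else a nonempty open $W \subset U$ missing every $\interior(U_i)$ would be the finite union $\bigcup_i(W\cap U_i)$ of sets closed in $W$ with empty interior, contradicting the Baire property of $W$). Refining near $x$: for $j \notin I(x)$ the closed set $U_j$ misses $x$, and for $j \in I(x)\setminus I^e(x)$ the set $\interior(U_j)$ misses $x$; hence on a small ball $B = B(x,\rho)$ only indices in $I^e(x)$ contribute interior points, and $\bigcup_{j \in I^e(x)} \interior(U_j)$ is dense in $B$. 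I would also use that $PC^1$-functions are locally Lipschitz (this is anyway implicit in the hypothesis, as $\partial^C f(x)$ must be defined), with some constant $L$ on $B$, and fix a uniform first-order Taylor remainder bound for the finitely many $f^i$ on $B$.

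Fix a direction $d \ne 0$ (the case $d=0$ is trivial). For $t>0$ small enough that $x+td \in B$, the inactive pieces disagree with $f$ near $x$, so $f(x+td) = f^{i(t)}(x+td)$ with $i(t) \in I(x)$; using $f^{i(t)}(x) = f(x)$ and the uniform Taylor bound, the difference quotient $q(t) := \parens{f(x+td)-f(x)}/t$ differs by $o(1)$ from the finite set $\Sigma := \set{(f^i)'(x)d \given i \in I(x)}$. Since $q$ is continuous on an interval and eventually lies in an arbitrarily small neighborhood of $\Sigma$ --- a disjoint union of small balls --- connectedness confines it to one of these balls, and shrinking the radius shows that $f'(x;d) = \lim_{t\downarrow 0} q(t)$ exists and lies in $\Sigma$. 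To replace the active index by an essentially active one --- the crux --- I would use the density of $\bigcup_{j \in I^e(x)} \interior(U_j)$ in $B$ to pick $t_k \downarrow 0$ and points $z_k \in \interior(U_{j_k})$ with $j_k \in I^e(x)$ and $\abs{z_k - (x + t_k d)} \le t_k^2$, so that $z_k \to x$ and $(z_k - x)/t_k \to d$. Passing to a subsequence with $j_k \equiv j$, the expansion $f(z_k) = f(x) + (f^j)'(x)(z_k - x) + o(t_k)$ (valid since $f \equiv f^j$ on $\interior(U_j)$ and $f^j(x) = f(x)$) together with the Lipschitz estimate $\abs{f(x + t_k d) - f(z_k)} \le L t_k^2$ gives $q(t_k) \to (f^j)'(x)d$; since the full limit exists, $f'(x;d) = (f^j)'(x)d$ with $j \in I^e(x)$. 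This proves the directional-derivative inclusion.

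From here I would close the loop. If $f$ is differentiable at $x$, then every $d \in \R^n$ lies in $\ker\parens{f'(x) - (f^i)'(x)}$ for some $i \in I^e(x)$; since a real vector space is not a finite union of proper subspaces (their complements are open and dense), $f'(x) = (f^i)'(x)$ for some $i \in I^e(x)$. For the Clarke Jacobian, ``$\supseteq$'' follows because for $i \in I^e(x)$ a sequence $x_k \in \interior(U_i)$ with $x_k \to x$ satisfies $x_k \in D_f$ and $f'(x_k) = (f^i)'(x_k) \to (f^i)'(x)$, so $(f^i)'(x) \in \partial^C f(x)$, and $\partial^C f(x)$ is convex. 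Conversely, any generating matrix $M = \lim_k f'(x_k)$ with $x_k \in D_f$, $x_k \to x$ satisfies, by the differentiability assertion applied at $x_k$, $f'(x_k) = (f^{j_k})'(x_k)$ with $j_k \in I^e(x_k)$; after passing to a subsequence with $j_k \equiv j$ one gets $x \in \cl(\interior(U_j))$, and $f(x_k) = f^j(x_k) \to f(x)$ forces $f^j(x) = f(x)$, so $j \in I^e(x)$ and $M = (f^j)'(x)$. Taking convex hulls yields ``$\subseteq$'' and finishes the proof.

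The step I expect to be the main obstacle is the upgrade from active to essentially active indices in the directional-derivative argument: along a fixed ray the local behavior of $f$ may be governed by a ``thin'' active piece that is not essentially active, and one must show its directional derivative nonetheless coincides with that of an essentially active piece. The device that makes this work --- approaching $x$ along $d$ but through the dense union of interiors of the essentially active pieces, and transferring the Taylor estimate back onto the ray via local Lipschitz continuity --- is exactly what must be set up carefully; everything else is a Baire-category or linear-algebra routine.
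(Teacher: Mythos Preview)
The paper does not prove this lemma; it is quoted from \cite[Props.~2.24, 2.25]{Semismooth} and stated without proof. So there is no ``paper's own proof'' to compare against.

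That said, your argument is correct and essentially the standard one. The Baire step (density of $\bigcup_i \interior(U_i)$), the refinement to essentially active indices near $x$, the connectedness argument for the existence of the directional limit, and the density-based upgrade from $I(x)$ to $I^e(x)$ are all sound. The Clarke Jacobian inclusions are handled cleanly, and the ``$\R^n$ is not a finite union of proper subspaces'' device for the differentiable case is the right tool. One small remark: local Lipschitz continuity of $PC^1$ functions is a genuine (easy) lemma, not merely ``implicit in the hypothesis''; you use it in the transfer estimate $\abs{f(x+t_k d) - f(z_k)} \le L t_k^2$, so it is worth stating rather than deferring to well-definedness of $\partial^C f$.
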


\begin{lemma}
	[\texorpdfstring{\cite[Def.~2.5, Props.~2.7 and 2.26]{Semismooth}}{}]
	\label{lem:PC1-functions are semismooth}
	Let $f: V \to \R^m$ be a $PC^1$-function on the open set $V \subset \R^n$.
	Then $f$ is semismooth, i.e.,
	$f$ is Lipschitz continuous near $x$, $f'(x;\cdot)$ exists and
	\[
		\sup_{M \in \partial^C f(x+s)} \abs{f(x+s) - f(x) - Ms}
		=
		\smallO(\abs{s})
		\quad
		\text{as } s \to 0.
	\]
\end{lemma}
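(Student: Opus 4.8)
The plan is to prove the three assertions separately, keeping a point $x \in V$ fixed throughout. Since $f$ is $PC^1$, there are a neighborhood $U$ of $x$ and finitely many functions $f^1,\dots,f^N \in C^1(U)$ of which $f$ is a continuous selection on $U$. Shrinking $U$, I take it to be open, bounded and convex with $\cl(U) \subset V$, and I discard every $f^i$ that is not active at $x$: if $f^i(x) \neq f(x)$, then by continuity of $f$ and $f^i$ also $f^i(y) \neq f(y)$ for $y$ near $x$, so such an index never occurs in the selection there. Thus I may assume $f^i(x) = f(x)$ for every $i$. Local Lipschitz continuity of $f$ near $x$ is a standard property of $PC^1$-functions — it is in fact already presupposed by \cref{lem:PC1-functions:form of derivative}, whose statement involves $\partial^C f$ — and can also be seen directly: with $L$ a common Lipschitz constant of $f^1,\dots,f^N$ on the compact set $\cl(U)$ and $y,z \in U$, set $u(t) := f((1-t)y+tz)$ and $T_i := \set{t \in [0,1] \given f((1-t)y+tz) = f^i((1-t)y+tz)}$; the $T_i$ are closed, cover $[0,1]$, and $u|_{T_i}$ agrees with a $C^1$-curve and is therefore $L\abs{z-y}$-Lipschitz, so a short connectedness argument — advance $\tau$ as far as the inequality $\abs{u(t)-u(0)} \le L\abs{z-y}\,t$ on $[0,\tau]$ permits, and observe that near any putative obstruction one stays inside a single $T_i$ — gives $\abs{f(y)-f(z)} = \abs{u(1)-u(0)} \le L\abs{z-y}$. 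Existence of the directional derivative $f'(x;\cdot)$ is likewise contained in \cref{lem:PC1-functions:form of derivative}; alternatively, restricting $f$ to a ray $t \mapsto f(x+td)$ yields a one-dimensional $PC^1$-function whose right derivative at $0$ exists by a connectedness argument of the same kind.

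It remains to prove the semismoothness estimate; fix $\varepsilon > 0$. For $\abs{s}$ small enough that $x+s \in U$, applying \cref{lem:PC1-functions:form of derivative} at $x+s$ (with the same selection functions) gives $\partial^C f(x+s) = \conv\set{(f^i)'(x+s) \given i \in I^e(x+s)}$. Writing $M \in \partial^C f(x+s)$ as $M = \sum_{i \in I^e(x+s)} \lambda_i (f^i)'(x+s)$ with $\lambda_i \ge 0$ and $\sum_i \lambda_i = 1$, we have $f(x+s)-f(x)-Ms = \sum_i \lambda_i\bigl(f(x+s)-f(x)-(f^i)'(x+s)s\bigr)$, so by the triangle inequality
\[
	\sup_{M \in \partial^C f(x+s)} \norm{f(x+s)-f(x)-Ms}
	\le
	\max_{i \in I^e(x+s)} \norm{f(x+s)-f(x)-(f^i)'(x+s)\,s}
	.
\]
Now fix any $i \in I^e(x+s)$. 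Then $i \in I(x+s)$, so $f(x+s) = f^i(x+s)$; and $i \in \set{1,\dots,N} = I(x)$, so $f(x) = f^i(x)$. Hence the $i$-th term on the right is the first-order Taylor remainder of the $C^1$-function $f^i$,
\[
	f^i(x+s) - f^i(x) - (f^i)'(x+s)\,s
	=
	\int_0^1 \bracks*{(f^i)'(x+ts) - (f^i)'(x+s)}\,s \d t
	.
\]
Since $(f^1)',\dots,(f^N)'$ are uniformly continuous on $\cl(U)$ and $\abs{(x+ts)-(x+s)} \le \abs{s}$ for $t \in [0,1]$, there is $\delta > 0$, independent of $i$ and of $t$, such that $\abs{s} \le \delta$ forces $\norm{(f^i)'(x+ts)-(f^i)'(x+s)} \le \varepsilon$; the integral then has norm at most $\varepsilon\abs{s}$. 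As this bound is uniform over $i \in I^e(x+s)$, the previous display yields $\sup_{M \in \partial^C f(x+s)} \norm{f(x+s)-f(x)-Ms} \le \varepsilon\abs{s}$ for all $s$ with $\abs{s} \le \delta$, which is the asserted $\smallO(\abs{s})$-estimate.

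In short, once \cref{lem:PC1-functions:form of derivative} is available, the substance of the proof is the elementary convexity remark collapsing the supremum over $\partial^C f(x+s)$ to the finitely many generating Jacobians $(f^i)'(x+s)$: each such generator corresponds to an index $i$ active both at $x$ and at $x+s$, so the error term becomes a genuine Taylor remainder of one fixed $C^1$-function, and the $\smallO$-estimate reduces to the uniform continuity of $(f^1)',\dots,(f^N)'$. The points that need care are the two connectedness arguments underlying local Lipschitz continuity and directional differentiability — and, if one does not take \cref{lem:PC1-functions:form of derivative} as a black box, re-establishing the identity $\partial^C f(x+s) = \conv\set{(f^i)'(x+s) \given i \in I^e(x+s)}$ itself.
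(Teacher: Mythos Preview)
The paper does not prove this lemma at all; it is quoted verbatim from \cite{Semismooth} (Propositions~2.7 and~2.26 there), so there is no proof to compare against. Your argument is correct and self-contained. The essential step --- discarding selection functions inactive at $x$ so that every $i \in I^e(x+s) \subset I(x+s)$ is automatically also in $I(x)$, whence $f(x+s)-f(x)-(f^i)'(x+s)s$ becomes the honest Taylor remainder $f^i(x+s)-f^i(x)-(f^i)'(x+s)s$ of a single $C^1$ function --- is exactly the right idea, and the reduction of the $\smallO$-estimate to uniform continuity of the finitely many $(f^i)'$ on $\cl(U)$ is clean. You correctly note that local Lipschitz continuity and existence of $f'(x;\cdot)$ are already implicit in \cref{lem:PC1-functions:form of derivative}, so your somewhat telegraphic connectedness sketch for the Lipschitz bound is harmless redundancy rather than a gap. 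Compared with simply citing the reference, your route costs a paragraph but makes the mechanism transparent: semismoothness of $PC^1$ functions is nothing more than the uniform $C^1$ Taylor estimate for the pieces, once \cref{lem:PC1-functions:form of derivative} has identified $\partial^C f(x+s)$ with the convex hull of the active Jacobians.
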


\section{Properties of \texorpdfstring{\eqref{P(xi)}}{(Pxi)}}
\label{sec:properties_Pxi}
In this section,
we study properties of the objective function
\begin{equation*}
	J(\xi)
	:=
	j(\xi) + \frac \alpha 2 W(\xi)
\end{equation*}
of problem \eqref{P(xi)}
and of the map $\xi \mapsto \tilde u(\xi)$.
We will see that these functions are differentiable
for almost all $\xi \in \R^n$
and this will be used to show their semismoothness.

\subsection{Differentiability condition and sectors}

We will introduce a criterion for differentiability first.

\begin{definition}
	\label{def:neu}
	Let $k \in \N$ be as small as possible, $\nu_{-1}, \ldots, \nu_{-k} \in \R^2$,
	and $\zeta_{-1}, \ldots, \zeta_{-k} \in \R$ be given
	such that $D = \bigcap_{i=-k}^{-1} \set{x \in \R^2 \given \nu_i^\top x - \zeta_i \le 0}$.
	By $e_{-i} := \set{x \in D \given \nu_{-i}^\top x = \zeta_{-i}}$
	we denote the edges of $D$.
	With this, we define the \emph{active indices}
	\[
		I_\xi(x)
		=
		\set*{i \in \set{1,\ldots,n} \given \varphi_\xi(x) = a_i^\top x - \xi_i}
		\cup
		\set*{-i \in \set{-1, \ldots, -k} \given x \in e_{-i}}
		.
	\]
	We say that
	a weight vector $\xi \in \R^n$ fulfills the
	\emph{differentiability condition},
	if
	\begin{align}
		\label{DC}
		\tag{DC}
		\begin{split}
			\forall q \in D: \abs{I_\xi(q)} \le 3
			.
		\end{split}
	\end{align}
	For $\xi \in \R^n$ satisfying \eqref{DC}
	we define the total configuration
	\[
		k(\xi) := \set*{I_\xi(x) \given x \in D, \abs{I_\xi(x)} = 3}
		.
	\]
	Since there are only finitely many possibilities
	for the total configuration
	$k(\xi)$,
	the set of weights fulfilling \eqref{DC}
	can be partitioned into \emph{sectors}
	$\Xi^1, \ldots, \Xi^N$
	on which $k(\cdot)$ is constant,
	i.e., the sectors are the equivalence classes w.r.t.\ the equivalence relation
	$\xi \sim \hat\xi$ if and only if $k(\xi) = k(\hat\xi)$.
	For convenience, we set $k(\Xi^i) = k(\xi)$, where $\xi \in \Xi^i$ is arbitrary.
\end{definition}

Note that if $\xi$ satisfies \eqref{DC}, $k(\xi)$ is always nonempty:
Indeed for each vertex $v$ of $D$, there exist $i,j \in \set{1,\ldots,k}$, $i \ne j$,
such that $v = e_{-i} \cap e_{-j}$,
i.e., $-i, -j \in I_\xi(v)$.
Further, there has to be at least one positive entry $l \in \set{1,\ldots,n}$ with $l \in I_{\xi}(v)$
due to the definition of $\varphi_\xi$.
Since $\xi$ is assumed to satisfy \eqref{DC},
this $l$ has to be unique, $I_\xi(v) = \set{l, -i, -j}$
and, therefore, $\set{l, -i, -j} \in k(\xi)$.

\begin{remark}
	\label{rem:ein_remark}
	We define additional outer regions $D_{-1}, \ldots, D_{-k}$
	where $D_{-i}$ is the set of all points in $\R^2 \setminus D$ whose projection onto $D$ lies in the relative interior of $e_{-i}$.
	Basically, \eqref{DC} means that nowhere more than three regions (including these outer regions) touch.
	We will see later that \eqref{DC} is sufficient but not necessary for differentiability of $J$,
	see \cref{rem:differentiability}.
	For convenience, we sometimes use $D_{-i}(\xi) := D_{-i}$.
\end{remark}
Now, we define a point $q_{ijl}$ which is a candidate touching point for the regions $D_i$, $D_j$, and $D_l$.
\begin{definition}
	\label{def:q_ijl}
	\begin{subequations}
		For mutually distinct $i,j,l \in \set{1,\ldots,n}$, we define (if the matrix is invertible)
		\begin{equation}
			\label{eq:q_ijl(xi) all positive}
			q_{ijl}(\xi)
			:=
			(a_i - a_j, a_i - a_l)^{-\top}
			\begin{pmatrix}
				\xi_i - \xi_j \\
				\xi_i - \xi_l
			\end{pmatrix}
			.
		\end{equation}
		For distinct $i,j \in \set{1, \ldots, n}$ and $l \in \set{-1, \ldots, -k}$, we define (if the matrix is invertible)
		\begin{equation}
			\label{eq:q_ijl(xi) i and j positive}
			q_{ijl}(\xi)
			:=
			(a_i - a_j, \nu_l)^{-\top}
			\begin{pmatrix}
				\xi_i - \xi_j \\
				\zeta_l
			\end{pmatrix}
			.
		\end{equation}
		For $i \in \set{1, \ldots, n}$ and distinct $j,l \in \set{-1, \ldots, -k}$, we define (if the matrix is invertible)
		\begin{equation}
			\label{eq:q_ijl(xi) j l negative}
			q_{ijl}(\xi)
			:=
			(\nu_j, \nu_l)^{-\top}
			\begin{pmatrix}
				\zeta_j \\
				\zeta_l
			\end{pmatrix}
			.
		\end{equation}
	\end{subequations}
	By permutation,
	\eqref{eq:q_ijl(xi) i and j positive}
	and
	\eqref{eq:q_ijl(xi) j l negative}
	are extended to other combinations
	of positive and negative indices.
\end{definition}

\begin{lemma}
	\label{lem:vertices affinely dependent}
	Let $\xi \in \R^n$ with \eqref{DC} be given.
	Let $\set{i,j,l} \in k(\xi)$. Then, the function $q_{ijl}$ is defined.
	Moreover, $x = q_{ijl}(\xi)$ is the unique point in $D$ with $I_\xi(x) = \set{i,j,l}$.
	It exists $\varepsilon > 0$ such that for all $\tilde \xi \in U_\varepsilon(\xi)$ it holds $I_{\tilde \xi}(q_{ijl}(\tilde \xi)) = \set{i,j,l}$.
\end{lemma}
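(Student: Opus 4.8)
The plan is to use that $\set{i,j,l} \in k(w)$ provides, by definition of the total configuration, some $x \in D$ with $I_w(x) = \set{i,j,l}$, and to read off linear equations from the active indices. If a positive index $m \in \set{i,j,l}$ is active at a point, then $a_m^\top x - w_m = \varphi_w(x)$, so comparing two such indices yields an equation $(a_m - a_{m'})^\top x = w_m - w_{m'}$; if a negative (edge) index $m \in \set{i,j,l}$ is active at $x$, then $x \in e_m$, i.e., $\xi_m^\top x = \zeta_m$. Since the maximum defining $\varphi_w$ is always attained, the positive part of $I_w(x)$ is nonempty, so at least one of $i,j,l$ is a positive index; hence, up to permutation, exactly one of the formulas \eqref{eq:q_ijl(w) all positive}--\eqref{eq:q_ijl(w) j l negative} is the relevant one, and in each case $x$ solves the $2 \times 2$ linear system that defines $q_{ijl}(w)$, whose coefficient matrix $M$ (one of $(a_i-a_j,a_i-a_l)$, $(a_i-a_j,\xi_l)$, $(\xi_j,\xi_l)$) does not depend on $w$.

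The core of the argument, and the main obstacle, is showing that $M$ is invertible; this is where \eqref{DC} is used, and I would argue by contradiction. If $M$ is singular, then — since $M$ is nonzero and $x$ solves the system defining $q_{ijl}(w)$ — that system has a whole affine line $L$ of solutions, with $x \in L$. One checks that along $L$ all affine functions $z \mapsto a_m^\top z - w_m$ with $m \in \set{i,j,l}$ positive stay equal to a common value $h(z)$, and that $L \cap D$ is contained in the edge $e_m$ for every negative $m \in \set{i,j,l}$ (two parallel lines meeting at $x$ coincide, and for a convex polygon the supporting line of an edge meets $D$ exactly in that edge). Thus $L \cap D$ is a segment through $x$, and the subsegment $S \ni x$ on which $\varphi_w = h$ is exactly the part of $L \cap D$ where the active set still contains $\set{i,j,l}$. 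Walking along $L \cap D$ away from $x$, an endpoint $z_0 \neq x$ of $S$ exists — otherwise $S = \set{x}$, which would already force a fourth index active at $x$ — and at $z_0$ one of two things happens: either $z_0 \in \partial D$, so $z_0$ lies on an edge of $D$ (on two edges if it is a vertex), or $z_0 \in \interior(D)$ and some further affine piece $a_{m'}^\top z - w_{m'}$ exceeds $h$ just beyond $z_0$, hence equals $h$ at $z_0$ since $a_{m'}^\top z - w_{m'} - h(z)$ is affine along $L$. Either way, $z_0 \in D$ has four distinct active indices, contradicting \eqref{DC}. (In the ``one positive, two negative'' situation this is even faster: $M$ singular would make the two edge lines distinct and parallel, hence disjoint, so no such $x$ could exist.) The fiddly part is keeping track of whether the points in question lie in $\interior(D)$, in the relative interior of an edge, or at a vertex, but each possibility closes the same way; here one uses that a convex polygon with a minimal facet description has at most two edges through any point.

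Given that $M$ is invertible, the remaining assertions are short. Since $x$ solves the system defining $q_{ijl}(w)$, we obtain $x = q_{ijl}(w)$; as every point of $D$ with active set $\set{i,j,l}$ solves the same system, this yields $q_{ijl}(w) \in D$ together with the uniqueness claim. For the perturbation statement, note that $q_{ijl}$ is an \emph{affine} function of $w$ on all of $\R^n$ (the matrix $M$ is $w$-independent), so $q_{ijl}(\tilde w) \to q_{ijl}(w) = x$ as $\tilde w \to w$, and $q_{ijl}(\tilde w)$ remains on the fixed lines $\xi_m^\top z = \zeta_m$ for the negative $m \in \set{i,j,l}$. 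Depending on whether $\set{i,j,l}$ contains zero, one, or two negative indices, $x$ lies in $\interior(D)$, in the relative interior of an edge, or at a vertex of $D$; in each case $q_{ijl}(\tilde w) \in D$ for $\tilde w$ near $w$. By construction, $q_{ijl}(\tilde w)$ satisfies — for the weight $\tilde w$ — the equalities and edge memberships that force $\set{i,j,l} \subseteq I_{\tilde w}(q_{ijl}(\tilde w))$, while the finitely many strict inequalities $a_m^\top x - w_m < \varphi_w(x)$ for positive $m \notin \set{i,j,l}$ and the conditions $x \notin e_m$ for edges $m \notin \set{i,j,l}$ are open and survive along the continuous perturbation $\tilde w \mapsto q_{ijl}(\tilde w)$. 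Taking $\varepsilon > 0$ small enough for all these finitely many conditions gives $I_{\tilde w}(q_{ijl}(\tilde w)) = \set{i,j,l}$ for all $\tilde w \in U_\varepsilon(w)$.
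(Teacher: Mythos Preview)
Your proof is correct and follows essentially the same approach as the paper: both argue by contradiction that singularity of the coefficient matrix would yield a one-dimensional solution set whose endpoint in $D$ carries a fourth active index, violating \eqref{DC}, and both finish the perturbation claim via continuity of the affine map $q_{ijl}$ together with the openness of the finitely many strict inequalities. You are in fact more thorough than the paper, which treats only the all-positive case explicitly and relegates the mixed-sign cases to ``analogously''.
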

\begin{proof}
	We only consider the case $i,j,l > 0$.
	We denote by $M$ the set of all points
	$x \in D$ satisfying $I_\xi(x) = \set{i,j,l}$.
	For all $x \in M$, we have
	$a_i^\top x - \xi_i = a_j^\top x - \xi_j = a_l^\top x - \xi_l$.
	Assume that the matrix in \eqref{eq:q_ijl(xi) all positive}
	is not invertible, i.e., $a_i, a_j, a_l$ are not affinely independent.
	Then one of these points is a convex combination of the others, w.l.o.g.\ we have
	$a_j = \lambda a_i + (1-\lambda) a_l$ for some $\lambda \in (0,1)$.
	The set $M$ is a convex subset of an affine $1$-dimensional subspace.
	In fact, one can check that $M$ is the edge between $D_i(\xi)$ and $D_l(\xi)$ by considering points of the form $x + t (a_i - a_l)$ for $x \in M$ and $t \in \R$.
	The solution set has to be an edge which ends in a vertex. But in this vertex, another (fourth) index is active. This violates \eqref{DC}.
	Thus, $a_i, a_j, a_l$ are affinely independent, the matrix in \eqref{eq:q_ijl(xi) all positive} is invertible and $q_{ijl}(\xi)$
	is the only point satisfying
	$a_i^\top x - \xi_i = a_j^\top x - \xi_j = a_l^\top x - \xi_l$,
	i.e., $M = \set{q_{ijl}(\xi)}$.

	All other indices $r$ are not active in $q_{ijl}(\xi)$,
	i.e., $a_r^\top q_{ijl}(\xi) < a_i^\top q_{ijl}(\xi)$
	and, by continuity,
	these inequalities hold for all $\tilde \xi$
	in a neighborhood of $\xi$.

	The cases in which one of the indices $i,j,l$ is negative
	can be discussed analogously.
\end{proof}

\begin{theorem}
	\label{thm:sectors are convex and open}
	The sectors $\Xi^r$, $r = 1,\ldots,N$, are convex and open.
\end{theorem}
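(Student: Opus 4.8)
The plan is to derive both assertions from the stronger statement that each sector $W^r$ is the solution set of a fixed finite system of strict linear inequalities in $w$; such a set is automatically open and convex. The starting point is that, for fixed mutually distinct indices, the coefficient matrix in \eqref{eq:q_ijl(w) all positive}--\eqref{eq:q_ijl(w) j l negative} is independent of $w$, while the right-hand side is affine in $w$. Hence, whenever it is defined, $q_{ijl}$ is an affine function of $w$, and for any $\set{i,j,l}$ and any competing positive index $m$ (respectively, any edge $-m'$ of $D$) the requirement that $m$ (respectively, $e_{-m'}$) be \emph{inactive} at $q_{ijl}(w)$ --- that is, $a_m^\top q_{ijl}(w) - w_m < a_i^\top q_{ijl}(w) - w_i$, respectively $q_{ijl}(w)$ lying strictly on the interior side of $e_{-m'}$ --- is a strict linear inequality in $w$. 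Let $\mathcal I_r$ denote the system consisting of all these inequalities, with $\set{i,j,l}$ ranging over $k(W^r)$ and $m, -m'$ over the corresponding competing indices (the sign of each edge inequality being read off from some reference weight in $W^r$). I claim $W^r = \set{w \given \mathcal I_r \text{ holds}}$, which gives the theorem at once.

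One inclusion is straightforward: if $w \in W^r$ and $\set{i,j,l} \in k(w) = k(W^r)$, then by \eqref{DC} and \cref{lem:vertices affinely dependent} the point $q_{ijl}(w)$ is the unique point of $D$ with active index set exactly $\set{i,j,l}$, so every competing positive index and every further edge is strictly inactive there, which is exactly $\mathcal I_r$. For the openness assertion this partial information already suffices, together with a compactness argument showing that \eqref{DC} and the value of $k$ are locally constant: persistence of the triples of $k(w)$ under small perturbations is \cref{lem:vertices affinely dependent}, and if there were $w_m \to w \in W^r$ admitting points $q_m \in D$ with $\abs{I_{w_m}(q_m)} \ge 4$, or with $I_{w_m}(q_m)$ a size-three set outside $k(w)$, then, passing to a subsequence with $q_m \to q \in D$ and fixing the offending index set $S$ (possible by finiteness), one could let $m \to \infty$ in the relations $a_i^\top q_m - (w_m)_i = \varphi_{w_m}(q_m) \ge a_r^\top q_m - (w_m)_r$ and $\xi_{-i}^\top q_m = \zeta_{-i}$ defining membership in $I_{w_m}(q_m)$; by continuity one obtains $S \subset I_w(q)$, contradicting \eqref{DC}. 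The affineness of $q_{ijl}$ in $w$ moreover makes the persistence of single triples along a segment transparent: for $w^t := (1-t) w^0 + t w^1$ one has $q_{ijl}(w^t) = (1-t) q_{ijl}(w^0) + t q_{ijl}(w^1)$, and for $\set{i,j,l} \in k(W^r)$ the condition \eqref{DC} confines both endpoints to the relative interior of the convex face of $D$ singled out by the negative indices of $\set{i,j,l}$, while the relevant separation functions are affine in $t$ and keep their strict sign on $[0,1]$; hence every triple of $k(W^r)$ is realized as a three-fold point of every $w^t$.

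The converse inclusion $\set{w \given \mathcal I_r \text{ holds}} \subset W^r$ is the core of the proof and the step I expect to be the main obstacle: one has to show that the finitely many strict inequalities attached to the triple points of $k(W^r)$ already force $w$ to reproduce the entire combinatorial type $k(W^r)$, and in particular to satisfy \eqref{DC}. The lever is the convexity of $\varphi_w$. Along each edge $e_{-m'}$ of $D$, $x \mapsto \varphi_w(x)$ restricts to a one-dimensional convex piecewise-affine function whose pieces are the traces of the regions $D_i(w)$; since the slope of the piece carried by $D_i(w)$ equals $a_i^\top \hat e$ for $\hat e$ the direction of $e_{-m'}$, convexity forces the pieces to occur in order of increasing $a_i^\top \hat e$, so that no region recurs. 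Using the elementary fact that a convex function touching an affine function at two points agrees with it on the segment between them, one checks that the transition points $q_{ij,-m'}(w)$ mandated by $\mathcal I_r$, together with the active region prescribed at each vertex of $D$, partition $e_{-m'}$ into exactly the pieces dictated by $k(W^r)$, and the strict inequalities rule out any extra active index or four-fold point on $\partial D$. An analogous two-dimensional argument --- propagating this local information across the cell structure of $\interior(D)$ encoded by the positive triples of $k(W^r)$ --- settles the interior of $D$ and delivers \eqref{DC} together with $k(w) = k(W^r)$. Once this is established, $W^r$ is a finite intersection of open half-spaces and therefore open and convex.
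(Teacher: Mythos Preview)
Your overall strategy coincides with the paper's: both prove that $W^r$ is exactly the solution set of the finite system of strict linear inequalities expressing that, for each $\{i,j,l\}\in k(W^r)$, every competing positive index and every non-incident edge of $D$ is strictly inactive at $q_{ijl}(w)$. Since $q_{ijl}$ is affine in $w$, this set is an open convex polyhedron. The easy inclusion $W^r\subset\{\mathcal I_r\text{ holds}\}$ is handled identically in both, via \cref{lem:vertices affinely dependent}. Your separate compactness argument for openness is correct and is an extra route the paper does not take; the paper gets openness only after the full characterization.

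The genuine divergence is in the converse inclusion, which you rightly single out as the crux. You propose to exploit the convexity of $\varphi_w$: along each edge of $D$ the restriction is a one-dimensional convex piecewise-affine function, and an affineness argument (if an extra index $s$ were active somewhere on the segment between two prescribed triple points where $s$ is strictly inactive, the affine function $x\mapsto (a_s-a_i)^\top x-(w_s-w_i)$ would be negative at the endpoints and zero in between) rules out spurious breakpoints. This is correct on the boundary, and the same affineness trick works on each inner segment $[q_{ijl}(w),q_{ijm}(w)]$ prescribed by $k(W^r)$. What you leave as ``an analogous two-dimensional argument'' is the remaining step: ruling out extra triple or four-fold points in the interiors of the cells and concluding $k(w)=k(W^r)$. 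This can be completed (once all prescribed edges are shown to carry exactly the expected two active indices, the cells they bound are determined), but it takes more care than your sketch suggests.

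The paper handles this step differently and more cleanly: it encodes the incidence structure of the regions as a planar graph $G_w$ (vertices are points where $\ge 3$ regions meet, edges correspond to shared boundaries) and compares it with the graph $G_{\mathbb I}$ built from a reference weight in $W^r$. From $\mathcal I_r$ one gets $G_{\mathbb I}\subset G_w$; then a degree-count (each vertex of $\mathbb I$ has degree $2$ or $3$ in \emph{both} graphs, depending on how many of $i,j,l$ are negative) together with connectedness of $G_w$ forces $G_w=G_{\mathbb I}$, whence \eqref{DC} and $k(w)=\mathbb I$. This combinatorial argument bypasses the edge-by-edge propagation you outline and gives the two-dimensional conclusion in one stroke; it is worth comparing with your approach, since it avoids precisely the part of your plan that is least fleshed out.
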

\begin{proof}
	Let $\mathbb{I} := k(\Xi^r)$.
	We check that the sector $\Xi^r$ is the set of all solutions $\xi \in \R^n$ of
	\begin{align*}
		&\forall \set{i,j,l} \in \mathbb{I}, i > 0:
		\forall s > 0, s \notin \set{i,j,l}:
		&
		a_s^\top q_{ijl}(\xi) - \xi_s &< a_i^\top q_{ijl}(\xi) - \xi_i
		, \\
		&\forall \set{i,j,l} \in \mathbb{I}: \phantom{i > 0,{}}
		\forall s < 0, s \notin \set{i,j,l}:
		&
		\nu_s^\top q_{ijl}(\xi) - \zeta_s &< 0
		.
	\end{align*}
	If this is verified, we easily get that $\Xi^r$ is convex and open.

	Let $\xi \in \Xi^r$.
	\Cref{lem:vertices affinely dependent} yields that no other index is active in $q_{ijl}(\xi)$,
	hence the two conditions follow.

	Let, on the other hand, $\xi \in \R^n$ be a solution of the above system.
	We have to check that \eqref{DC} holds at $\xi$ and $\xi \in \Xi^r$, i.e., $k(\xi) = \mathbb{I}$.
	For any triple $\set{i, j, l} \in \mathbb{I}$, the
	definition of the points $q_{ijl}(\xi)$
	and the validity of the above system
	readily imply
	$q_{ijl}(\xi) \in D$
	and $I_\xi(q_{ijl}(\xi)) = \set{i, j, l}$.
	If we would already know that \eqref{DC} is satisfied in $\xi$,
	this shows $\set{i, j, l} \in k(\xi)$ for all $\set{i, j, l} \in \mathbb{I}$,
	i.e.,
	$\mathbb{I} \subset k(\xi)$.
	It remains to check that $\xi$ satisfies \eqref{DC} and $\mathbb{I} = k(\xi)$.

	To this end,
	we show that the structure of the regions $D_i(\xi)$ can already be inferred from $\mathbb{I}$.
	We encode this structure in
	a graph $G_\xi = (V_\xi, E_\xi)$
	using the regions $D_i(\xi)$ from \cref{def:varphi_xi}
	and the outer regions $D_{-i}$ from \cref{rem:ein_remark}.
	We set
	\begin{equation*}
		V_\xi
		:=
		\set{
			I \subset \set{1,\ldots,n}\cup\set{-1,\ldots,-k}
			\given
			\abs{I} \ge 3,
			\exists x \in D:
			\forall i: i \in I \Leftrightarrow x \in \cl(D_i(\xi))
		},
	\end{equation*}
	i.e., the vertices in $V_\xi$
	correspond to the vertices of the regions $D_i(\xi)$.
	The edge set encodes the edges of the regions, i.e.,
	\begin{equation*}
		E_\xi
		:=
		\set{
			\set{I_1, I_2}
			\given
			I_1, I_2 \in V_\xi,
			\abs{ I_1 \cap I_2 } = 2
		}
		.
	\end{equation*}
	This is a connected graph.
	Similarly, we define a graph
	$G_{\mathbb{I}} = (V_{\mathbb{I}}, E_{\mathbb{I}})$
	by using some $\xi_r \in \Xi^r$, i.e., $k(\xi_r) = \mathbb{I}$.
	Since \eqref{DC} is satisfied for $\xi_r$,
	we can check $V_{\mathbb{I}} = \mathbb{I}$.
	We will show that both graphs coincide.

	Given a triple $\set{i, j, l} \in \mathbb{I}$,
	the point $q_{ijl}(\xi)$ ensures $\set{i, j, l} \in V_\xi$,
	since the linear inequality system shows that
	$q_{ijl}(\xi)$ does not belong to the closure of $D_m(\xi)$
	for all $m \not\in \set{i, j, l}$.
	Thus, $V_{\mathbb{I}} \subset V_\xi$.
	Similarly, if
	$\set{I_1, I_2} \in E_{\mathbb{I}}$,
	we have $\abs{I_1 \cap I_2} = 2$
	and $I_1, I_2 \in V_{\mathbb{I}} \subset V_\xi$.
	Consequently, $G_{\mathbb{I}}$
	is a subgraph of $G_\xi$.

	In order to check equality of both graphs,
	we note that any vertex
	$\set{i, j, l} \in \mathbb{I}$
	has degree $2$
	(if exactly two of $i, j, l$ are negative)
	or $3$
	(if at most one of $i, j, l$ is negative)
	in both graphs $G_\xi$ and $G_{\mathbb{I}}$.
	Hence, every edge from $E_\xi$ incident to $\set{i, j, l}$
	is already present in $E_{\mathbb{I}}$.
	This shows that both graphs coincide.

	Let us check that \eqref{DC} is satisfied at $\xi$.
	By definition of the graphs and $V_\xi = V_{\mathbb{I}}$,
	for every $x \in D$ with $x \in \cl(D_i(\xi))$
	for at least three indices $i$,
	we have $x = q_{ijl}(\xi)$ for some $\set{i,j,l} \in \mathbb{I}$.
	Thus, $I_\xi(x) = \set{i,j,l}$ follows from the inequality system.
	On the other hand, if $x$ lies on the edge between
	$q_{ijl}(\xi)$ and $q_{ijm}(\xi)$,
	i.e., $x = \lambda q_{ijl}(\xi) + (1 - \lambda) q_{ijm}(\xi)$
	for $\lambda \in (0,1)$,
	$I_\xi(x) = \set{i,j}$ follows from the inequality system.
	All other $x \in D$ belong to a region $D_i(\xi)$
	and, thus, $I_\xi(x) = \set{i}$.
	This shows \eqref{DC}.

	The definition of $k(\xi)$ now yields
	$k(\xi) = V_\xi = V_{\mathbb{I}} = \mathbb{I}$,
	hence $\xi \in \Xi^r$.
	This finishes the proof.
\end{proof}
\begin{remark}[Reconstruction of regions]
	\label{remark:reconstruction of regions from sector}
	The proof of \cref{thm:sectors are convex and open}
	actually shows
	that
	if $\xi$ satisfies \eqref{DC},
	we can reconstruct the whole partition of $D$ into
	the regions $D_i(\xi)$
	by looking at the configuration $k(\xi)$.
	In particular, two regions $D_i(\xi)$ and $D_j(\xi)$
	are adjacent if and only if
	$\set{i, j, l} \in k(\xi)$
	for some index $l$.
	Note that there exist exactly two indices $l$ and $m$
	with this property
	and the points
	$q_{i j l}(\xi)$ and $q_{i j m}(\xi)$
	are the vertices of the edge between
	$D_i(\xi)$ and $D_j(\xi)$.
\end{remark}

Now we show that \eqref{DC}
is valid at almost all points.
\begin{theorem}
	\label{thm:degenerated sectors are zero-set}
	The set
	$S := \set{\xi \in \R^n \given \eqref{DC} \text{ is violated}}$
	is a $\lambda^n$ null set.
	In particular, $\bigcup_{i=1}^N \cl(\Xi^i) = \R^n$ holds.
\end{theorem}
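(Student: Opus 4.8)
The plan is to exhibit $S$ as a finite union of sets, each of which is contained in a hyperplane or lower-dimensional affine variety in $\R^n$, hence a $\lambda^n$ null set; since a finite union of null sets is null, $S$ is null. The violation of \eqref{DC} means that there exists a point $q \in D$ at which at least four indices from $\set{1,\ldots,n}\cup\set{-1,\ldots,-k}$ are active. Fix such a quadruple $\set{i,j,l,m}$ (allowing negative indices, which correspond to edges $e_{-i}$ of $D$). The first step is to write the condition ``there exists $q$ with $i,j,l,m$ all active at $q$'' as a polynomial (in fact affine) constraint on $w$. For four positive indices, activity of $i,j,l,m$ at a common $q$ means
\[
	a_i^\top q - w_i = a_j^\top q - w_j = a_l^\top q - w_l = a_m^\top q - w_m .
\]
Generically the first three equations already pin down $q = q_{ijl}(w)$ (an affine function of $w$ when the relevant $2\times 2$ matrix is invertible, by \cref{lem:vertices affinely dependent}'s matrix), and substituting into the fourth equation yields a single affine equation in $w$, defining a hyperplane $H_{ijlm} \subset \R^n$. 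When instead $a_i,a_j,a_l$ are affinely dependent, the set of $w$ with a genuine common point already lies in a lower-dimensional set, so it is still null. The cases with one, two, or three negative indices among $\set{i,j,l,m}$ are handled the same way, using the definitions \eqref{eq:q_ijl(w) i and j positive} and \eqref{eq:q_ijl(w) j l negative}: negative indices contribute equations $\xi_{-p}^\top q = \zeta_{-p}$ that do not involve $w$ at all, so each extra active index beyond the first three imposes at least one nontrivial affine relation on $w$ (and if it imposes none because the geometry is degenerate, the relevant $w$-set is again lower-dimensional). Thus for every choice of four distinct indices we get a $\lambda^n$-null set, and
\[
	S \subset \bigcup_{\set{i,j,l,m}} \bigl(H_{ijlm} \cup (\text{degenerate piece})\bigr),
\]
a finite union, so $S$ is null.

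The second step is the ``in particular'' claim: $\bigcup_{i=1}^N \cl(W^i) = \R^n$. By \cref{thm:sectors are convex and open} each $W^i$ is open, and by definition every $w \notin S$ lies in some $W^i$, so $\R^n \setminus S = \bigcup_i W^i$ and hence $\bigcup_i \cl(W^i) \supset \cl(\R^n \setminus S)$. Since $S$ is a null set it has empty interior, so $\R^n \setminus S$ is dense in $\R^n$, giving $\cl(\R^n \setminus S) = \R^n$. Therefore $\bigcup_{i=1}^N \cl(W^i) = \R^n$.

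The main obstacle I anticipate is the careful bookkeeping of the degenerate sub-cases: when the $2\times 2$ matrix used to define $q_{ijl}(w)$ fails to be invertible, or when a fourth active index happens to be automatically implied by three others for \emph{all} $w$ (which, as the argument in \cref{lem:vertices affinely dependent} shows, forces $a_i,a_j,a_l$ to be affinely dependent — a codimension condition on the fixed points $a_i$, not on $w$, but one must still check that the resulting $w$-set is lower-dimensional rather than all of $\R^n$). One should organize the proof by first treating the generic case (all relevant matrices invertible, producing honest hyperplanes $H_{ijlm}$), then observing that the non-generic configurations only shrink the set of admissible $w$, so they contribute sets of dimension $< n$; a clean way to package this is to note that in every case the set of $w$ admitting a common active quadruple $\set{i,j,l,m}$ is the image under a fixed affine map of the solution set of an overdetermined affine system, hence contained in a proper affine subspace. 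Once this case analysis is set up, the measure-theoretic conclusion and the density argument are routine.
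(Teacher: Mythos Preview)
Your proposal is correct and follows the same overall strategy as the paper: write $S$ as a finite union over quadruples $\set{i,j,l,m}$ of sets $M_{ijlm}$, show each is contained in a proper affine subspace of $\R^n$, and deduce the density statement from $S$ having empty interior.

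The one point worth noting is that the paper's execution avoids exactly the case analysis you flag as the ``main obstacle''. Rather than solving for $q = q_{ijl}(w)$ from three equations and substituting into the fourth (which forces you to treat invertible and non-invertible $2\times 2$ matrices separately), the paper writes the four simultaneous activity conditions directly as the linear system $A q = T w$ with $A \in \R^{3\times 2}$ and a surjective $T \in \R^{3\times n}$, and observes that
\[
	M_{ijlm} \subset \set{w \given Aq = Tw \text{ solvable}} = T^{-1}(A\R^2).
\]
Since $\operatorname{rank} A \le 2$ and $\dim\ker T = n-3$, this preimage has dimension at most $n-1$ regardless of any degeneracies among $a_i,a_j,a_l,a_m$. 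This is precisely the ``clean packaging'' you allude to in your final paragraph, and it makes the degenerate sub-cases disappear entirely; your proof would be streamlined by leading with it rather than the substitution argument.
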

\begin{proof}
	By definition,
	$S = \set*{\xi \in \R^n \given \exists q \in D : \abs{I_\xi(q)} \ge 4}$.
	This is the union of the sets
	$M_{ijlm} := \set{\xi \in \R^n \given \exists q \in D: i,j,l,m \in I_\xi(q)}$
	for all pairwise distinct $i,j,l,m \in \set{-k, \ldots,n} \setminus \set{0}$.
	It is sufficient to show that these sets have Lebesgue measure zero.
	We consider the case that all indices are positive,
	the other cases being similar.
	We define the matrix
	$A = (a_i - a_j, a_i - a_l, a_i - a_m)^\top \in \R^{3 \times 2}$
	and the matrix $T \in \R^{3 \times n}$ via
	$T \xi := (\xi_i - \xi_j, \xi_i - \xi_l, \xi_i - \xi_m)^\top$.
	We have
	\[
		M_{ijlm}
		\subset
		\set*{
			\xi \in \R^n \given
			A q = T \xi
			\text{ has a solution }
			q \in \R^2
		}
		=
		T^{-1}( A \R^2 )
		.
	\]
	Note that this is only an inclusion,
	since we no longer require $q \in D$ in the right-hand side.
	The matrix $A$ has rank at most $2$,
	thus the image $A \R^2$ is at most two dimensional.
	Further, $\dim(\ker(T)) = n - 3$.
	Combined, $\dim(M_{ijlm}) \le n - 3 + 2 \le n - 1$ and thus $\lambda^n(M_{ijlm}) = 0$.
\end{proof}

\subsection{Continuity and differentiability of \texorpdfstring{$J$}{J}}
\label{subsec:diff}
Now, we are going to
prove properties of the functions $J, W, j, \tilde u$
appearing in the problem \eqref{P(xi)}.
For convenience, we recall the definition of $W$,
\begin{equation}
	\label{eq:W(xi) as sum of integrals over regions}
	W(\xi)
	:=
	\int_D \abs{x - \nabla \varphi_\xi(x)}^2 \d u_d(x)
	=
	\sum_{i=1}^n \int_{D_i(\xi)} \abs{x}^2 - 2 a_i^\top x + \abs{a_i}^2 \d u_d(x)
	.
\end{equation}

\begin{theorem}
	\label{thm:J_r_W_tilde_u_Lipschitz}
	The maps $\tilde u, r, W, J$ are Lipschitz continuous.
\end{theorem}
\begin{proof}
	We start with the components of the function $\tilde u \colon \R^n \to \R^n$.
	Let $\xi, \hat \xi \in \R^n$ be given.
	The density function $\varrho$ of $u_d$
	is continuous and
	hence bounded on $D$ by some constant $K > 0$.
	This yields
	\begin{equation}
		\label{eq:first_estimate}
		\begin{aligned}
			\abs{\tilde u_i(\xi) - \tilde u_i(\hat \xi)}
			&=
			\abs{u_d(D_i(\xi)) - u_d(D_i(\hat \xi))}
			\\&
			\le
			u_d(D_i(\xi) \setminus D_i(\hat \xi)) + u_d(D_i(\hat \xi) \setminus D_i(\xi))
			\\
			&\le
			K \left( \lambda^2(D_i(\xi) \setminus D_i(\hat \xi)) + \lambda^2(D_i(\hat \xi) \setminus D_i(\xi)) \right)
			.
		\end{aligned}
	\end{equation}
	The measure of the first set difference can be bounded by
	\begin{equation*}
		\lambda^2(D_i(\xi) \setminus D_i(\hat \xi))
		\le
		\sum_{j \ne i} \lambda^2\parens{D \cap \set{x \in \R^2 \given \hat \xi_i - \hat \xi_j \ge (a_i - a_j)^\top x > \xi_i - \xi_j}}
		.
	\end{equation*}
	For every $j$ the set on the right-hand side
	is a strip of width
	$\frac{\abs{\xi_i - \xi_j - \hat \xi_i + \hat \xi_j}}{\abs{a_i - a_j}}$
	intersected with $D$.
	Since the diameter $\hat L$ of $D$ is finite,
	we get
	\begin{equation}
		\label{eq:second_estimate}
		\abs{\tilde u_i(\xi) - \tilde u_i(\hat \xi)}
		\le
		2K
		\hat L
		\sum_{j \ne i} \frac{\abs{\xi_i - \xi_j - \hat \xi_i + \hat \xi_j}}{\abs{a_i - a_j}}
		\le
		4 K \hat L \sum_{j \ne i} \frac 1 {\abs{a_i - a_j}}
		\abs{\xi - \hat \xi}
		.
	\end{equation}
	This shows that $\tilde u$ is Lipschitz continuous.

	Now we consider the function $j = g \circ \tilde u$.
	The function $g$ is Lipschitz continuous
	on the compact set $\Uad$,
	since it is continuously Fréchet differentiable.
	Since $\tilde u$ maps to $\Uad$,
	$j$ is the composition of Lipschitz continuous functions and therefore Lipschitz.

	We continue with $W$ using \eqref{eq:W(xi) as sum of integrals over regions}.
	The integrand $\abs{x-a_i}^2$
	is bounded by some constant $\kappa > 0$ as $D, F$ are compact.
	We get
	\begin{align*}
		\abs{W(\xi) - W(\hat \xi)}
		&\le
		\sum_{i=1}^n
		\left[
			\int_{D_i(\xi) \setminus D_i(\hat \xi)} \abs{x - a_i}^2 \d u_d(x)
			+
			\int_{D_i(\hat \xi) \setminus D_i(\xi)} \abs{x - a_i}^2 \d u_d(x)
		\right]
		\\
		&\le
		\kappa
		\sum_{i=1}^n
		\left( u_d(D_i(\xi) \setminus D_i(\hat \xi)) + u_d(D_i(\hat \xi) \setminus D_i(\xi)) \right)
		.
	\end{align*}
	Arguing as in \eqref{eq:first_estimate}, \eqref{eq:second_estimate}
	yields the Lipschitz continuity of $W$.
	Finally,
	$J = j + \frac \alpha 2 W$ is Lipschitz as well.

	As for $r$, it is sufficient to check that $\xi \mapsto \nabla g(\tilde u(\xi))$
	is Lipschitz continuous.
	The function $\nabla g$ is Lipschitz continuous on
	the compact set $\Uad$, since $g$ is $C^2$.
	Further, $\tilde u$ is Lipschitz continuous as proven above.
	This shows the claim.
\end{proof}
For each $\xi$,
we define a matrix which will serve as
(a substitute of) the derivative of $\tilde u$.
\begin{definition}
	\label{def:theta}
	Given $\xi \in \R^n$,
	we define the matrix $\Theta(\xi) \in \R^{n \times n}$.
	Let $i \in \set{1,\ldots,n}$ be given.

	Case 1: $u_d(D_i(\xi)) > 0$.
	Let $i_1, \ldots, i_m$ be the indices of the neighboring regions
	(sharing an edge with $D_i(\xi)$)
	in mathematically positive order.
	For convenience, we set
	$i_0 := i_m$ and $i_{m+1} := i_1$.
	Note that $q_{i i_{s-1} i_s}(\xi)$
	is well defined and is the vertex between
	$D_i(\xi)$, $D_{i_{s-1}}(\xi)$, $D_{i_s}(\xi)$;
	and the edge between $D_i(\xi)$ and $D_{i_s}(\xi)$
	is the segment from
	$q_{i i_{s-1} i_s}(\xi)$
	to
	$q_{i i_s i_{s+1}}(\xi)$.
	We set
	\[
		\Theta_{ik}(\xi)
		:=
		\begin{cases}
			\displaystyle
			- \sum_{s : i_s > 0}
			\frac{1}{\abs{a_{i_s} - a_i}}
			\int_{q_{i i_{s-1} i_s}(\xi)}^{q_{i i_s i_{s+1}}(\xi)} \varrho(x) \d \HH^1(x)
			&\text{if } k = i,
			\\
			\displaystyle
			\frac{1}{\abs{a_{i_s} - a_i}}
			\int_{q_{i i_{s-1} i_s}(\xi)}^{q_{i i_s i_{s+1}}(\xi)} \varrho(x) \d \HH^1(x)
			&\text{if } k = i_s \text{ for some } s \ge 1,
			\\
			0
			&\text{otherwise.}
		\end{cases}
	\]
	Here, $\HH^1$ is the one-dimensional Hausdorff measure.

	Case 2: $u_d(D_i(\xi)) = 0$.
	We set
	$\Theta_{ij}(\xi) := 0$ for all $j \in \set{1,\ldots,n}$.
\end{definition}
Roughly speaking,
an off-diagonal entry $\Theta_{i k}(\xi)$, $i \ne k$
is non-zero only if $D_i(\xi)$ and $D_k(\xi)$
are adjacent and the entry is
a weighted length of the corresponding edge.
The diagonal entries are chosen
such that the row-wise sum is zero.

\begin{lemma}
	\label{lem:properties of Theta}
	The function $\Theta \colon \R^n \to \R^{n \times n}$
	has the following properties.
	\begin{enumerate}
		\item
			\label{lem:properties of Theta:i}
			For all $\xi \in \R^n$,
			the matrix $\Theta(\xi)$ is symmetric and negative semidefinite.
		\item
			\label{lem:properties of Theta:ii}
			There exists a constant $C > 0$, such that for all $\xi \in \R^n$,
			$\norm{\Theta(\xi)} \le C$ holds.
		\item
			\label{lem:properties of Theta:iii}
			For every $k \in \set{1, \ldots, N}$,
			$\Theta$ is uniformly continuous on the sector $\Xi^k$
			(see \cref{def:neu}).
	\end{enumerate}
\end{lemma}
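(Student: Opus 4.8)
The plan is to read off from \cref{def:theta} three structural features of $\Theta(w)$ that drive all three claims: for $i \ne k$ the entry $\Theta_{ik}(w)$ is symmetric, nonnegative and bounded by a $w$-independent constant, and each row of $\Theta(w)$ sums to zero. For part~\ref{lem:properties of Theta:i} I would first note that, for $i \ne k$, $\Theta_{ik}(w)$ is either zero or equals $\abs{a_i - a_k}^{-1}\int_e \varrho \d s$ over the common edge $e$ of $D_i(w)$ and $D_k(w)$; since $e$ and $\abs{a_i-a_k}$ are unchanged when $i$ and $k$ are swapped, $\Theta_{ik}(w) = \Theta_{ki}(w)$ whenever both regions fall under Case~1. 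If $u_d(D_i(w)) = 0$ (Case~2), the $i$-th row of $\Theta(w)$ is zero by definition; moreover, since $\varrho \ge 0$ is continuous and its integral over the open set $D_i(w)$ vanishes, $\varrho \equiv 0$ on $D_i(w)$ and hence on $\cl(D_i(w))$, so every edge of $D_i(w)$ carries $\int \varrho \d s = 0$ and the Case~1 formula yields $\Theta_{ki}(w) = 0$ for all $k$; thus the $i$-th column vanishes too and symmetry is preserved. The diagonal is defined so that $\sum_k \Theta_{ik}(w) = 0$. From symmetry, the zero row sums, and $\Theta_{ik}(w) \ge 0$ for $i \ne k$, the elementary identity
\[
	v^\top \Theta(w) v = -\frac12 \sum_{i \ne k} \Theta_{ik}(w)\,(v_i - v_k)^2 \le 0, \qquad v \in \R^n,
\]
gives negative semidefiniteness (equivalently, $-\Theta(w)$ is a weighted graph Laplacian). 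For part~\ref{lem:properties of Theta:ii}, every off-diagonal entry is bounded by $K\hat L/\delta_0$, where $K$ bounds $\varrho$ on $D$, $\hat L$ is the diameter of $D$ (an upper bound for the length of any segment contained in $D$), and $\delta_0 := \min_{i\ne j}\abs{a_i-a_j} > 0$ as the $a_i$ are pairwise distinct; each diagonal entry is a sum of at most $n$ such terms, so every entry of $\Theta(w)$ is at most $nK\hat L/\delta_0$ in absolute value, uniformly in $w$.

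For part~\ref{lem:properties of Theta:iii}, fix a sector $W^k$. By \cref{remark:reconstruction of regions from sector}, the adjacency pattern of the regions $D_i(w)$ — which regions neighbor $D_i(w)$, and in which cyclic order — is the same for all $w \in W^k$, so on $W^k$ each $\Theta_{ik}(w)$ is one fixed combinatorial instance of the formula in \cref{def:theta}. Its building blocks are the vertices $q_{ijl}(w)$, which by \eqref{eq:q_ijl(w) all positive}--\eqref{eq:q_ijl(w) j l negative} are \emph{affine} in $w$ (with a single Lipschitz constant valid on all of $W^k$) and, by the characterization of $W^k$ in \cref{thm:sectors are convex and open}, lie in the compact polygon $D$ for every $w \in W^k$. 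Writing an edge integral as $\int_0^1 \varrho\big(q(w) + t(q'(w) - q(w))\big)\,\abs{q'(w) - q(w)}\d t$ with $q, q'$ affine and $\abs{q'(w)-q(w)} \le \hat L$, the uniform continuity of $\varrho$ on the compact set $D$ together with the Lipschitz dependence of $q, q'$ on $w$ makes this integral uniformly continuous in $w$ on $W^k$; one also checks that when $u_d(D_i(w)) = 0$ the Case~1 formula already returns $0$ (by the argument used for part~\ref{lem:properties of Theta:i}), so the Case~1/Case~2 split introduces no jump. As $\Theta$ has finitely many entries, uniform continuity on $W^k$ follows.

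The step needing the most care is part~\ref{lem:properties of Theta:iii}. One must make sure the combinatorics underlying \cref{def:theta} is genuinely frozen on the \emph{open} sector — no edge of any $D_i(w)$ can shrink to a point within $W^k$, since that would create a fourth active index and contradict \eqref{DC} — and that the Case~1/Case~2 alternative, which unlike the adjacency pattern is \emph{not} constant on $W^k$, does not spoil continuity. The facts that make the passage from mere continuity to \emph{uniform} continuity on the possibly unbounded sector go through are the affine dependence of the $q_{ijl}$ on $w$ and their confinement to the compact set $D$.
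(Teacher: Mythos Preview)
Your proof is correct and follows essentially the same line as the paper. For part~\ref{lem:properties of Theta:i}, the paper writes $\Theta(w) = -\sum_{\{i,j\}\in E(w)} Q_{ij}(w)(e_i-e_j)(e_i-e_j)^\top$ as a sum of symmetric negative semidefinite rank-one blocks; your graph-Laplacian identity $v^\top\Theta(w)v = -\tfrac12\sum_{i\ne k}\Theta_{ik}(w)(v_i-v_k)^2$ is the same computation expanded. Parts~\ref{lem:properties of Theta:ii} and~\ref{lem:properties of Theta:iii} are argued identically (bounded edge lengths and density; affine $q_{ijl}$ into compact $D$ composed with a uniformly continuous map), except that you are more explicit than the paper about why the Case~1/Case~2 split in \cref{def:theta} cannot create a discontinuity on a sector---a point the paper absorbs into ``straightforward to check''.
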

In \ref{lem:properties of Theta:ii}, $\norm{\cdot}$ denotes the spectral norm of a matrix.
\begin{proof}
	\ref{lem:properties of Theta:i}:
	We denote by
	\begin{equation*}
		E(\xi) :=
		\set{
			\set{i, j} \subset \set{1,\ldots,n}
			\given
			\text{$D_i(\xi)$ and $D_j(\xi)$ share an edge}
		}
	\end{equation*}
	the indices corresponding to the inner edges
	(of the partition of $D$ into the regions $D_i(\xi)$).
	For $\set{i, j} \in E(\xi)$,
	we set
	$Q_{ij}(\xi) := \int_{p}^{q} \varrho(x) \d \HH^1(x) / \abs{a_{j} - a_i} \ge 0$,
	where $p$ and $q$ are the vertices of the edge between
	$D_i(\xi)$ and $D_j(\xi)$.
	We further
	define the matrix $A^{ij}(\xi) \in \R^{n \times n}$ via
	\[
		A^{ij}(\xi)
		:=
		-Q_{ij}(\xi)
		(e_i - e_j) (e_i - e_j)^\top
		,
	\]
	where $e_i, e_j$ are unit vectors in $\R^n$.
	It is easy to check
	$\Theta(\xi) = \sum_{\set{i,j} \in E(\xi)} A^{ij}(\xi)$,
	i.e., $\Theta(\xi)$ is the sum of symmetric negative semidefinite matrices.

	\ref{lem:properties of Theta:ii}:
	As $\Theta(\xi)$ is the sum of at most $n(n-1)/2$ matrices of type $A^{ij}$, it is sufficient to check their boundedness.
	The numbers $Q_{ij}(\xi)$ are uniformly bounded as the density $\varrho$ is bounded on the compact set $D$, hence the weighted length of the edge between two vertices is bounded.

	\ref{lem:properties of Theta:iii}:
	Let $\xi, \tilde \xi \in \Xi^k$ be arbitrary.
	\Cref{remark:reconstruction of regions from sector} allows us to
	reconstruct the structure of the regions from the configuration.
	In particular, we have $E(\xi) = E(\tilde \xi)$.
	Thus,
	\[
		\norm{\Theta(\xi) - \Theta(\tilde \xi)}
		\le
		\sum_{\set{i,j} \in E(\xi)} \norm*{A^{ij}(\xi) - A^{ij}(\tilde \xi)}
		=
		\sqrt{2} \sum_{\set{i,j} \in E(\xi)} \abs*{Q_{ij}(\xi) - Q_{ij}(\tilde \xi)}
		.
	\]
	Here, $\sqrt{2}$ is the norm of the matrix
	$(e_i - e_j) (e_i - e_j)^\top$.
	In order to estimate $Q_{ij}$,
	let $l \ne m$ be those indices with
	$\set{i,j,l}, \set{i,j,m} \in k(\Xi^k)$,
	see \cref{remark:reconstruction of regions from sector}.
	Now, $Q_{ij}$ is the composition of
	\begin{align*}
		\Xi^k \ni \xi
		&\mapsto
		(q_{ijl}(\xi), q_{ijm}(\xi))
		\in D \times D
		,
		\\
		D \times D \in (p,q)
		&\mapsto
		\int_p^q \varrho \d \HH^1(x) / \abs{a_j - a_i}
		=
		\int_0^1 \varrho(p+t(q-p)) \dt \abs{p-q} / \abs{a_j - a_i}
		\in \R
		.
	\end{align*}
	The former function is affine, cf.\ \cref{lem:vertices affinely dependent}, thus Lipschitz.
	The latter function is continuous,
	thus uniformly continuous (Heine--Cantor theorem).
	In total, $Q_{ij}$ is uniformly continuous
	on $\Xi^k$ for all $\set{i,j} \in E(\xi)$.
	In turn, $\Theta$, which is the sum of uniformly continuous functions, has this property as well.
\end{proof}

\begin{theorem}
	\label{thm:J_r_tilde u_W_differentiable_for_DC}
	Let $\xi \in \R^n$ with \eqref{DC}.
	Then, $\tilde u, r, W, J$ are continuously differentiable at $\xi$ with the derivatives
	$\tilde u'(\xi) = \Theta(\xi)$,
	$r'(\xi) = \frac 1 \alpha \nabla^2 g(\tilde u(\xi)) \Theta(\xi) - I$,
	$W'(\xi) = 2(A-\xi)^\top \Theta(\xi)$ and
	$J'(\xi) = \alpha r(\xi)^\top \Theta(\xi)$,
	where $A$ and $r$ were defined in \cref{lem:optimality of xi on J}.
\end{theorem}
\begin{proof}
	As \eqref{DC} holds and sectors are open,
	cf.\ \cref{thm:sectors are convex and open},
	the structure of the regions, i.e., the total configuration, stays constant for perturbations $\delta \xi \in \R^2$ with small enough norm $\abs{\delta\xi}$.
	We consider the directional derivative of $\tilde u$ in a direction $\delta \xi$. For every $\set{i, j, l} \in k(\xi)$,
	the map $q_{i j l}$ is affine, see \cref{lem:vertices affinely dependent}.
	We denote by
	$\delta q_{i j l}$
	the directional derivative of $q_{i j l}$
	at $\xi$ in direction $\delta \xi$,
	i.e., $\delta q_{i j l} := q_{i j l}'(\xi; \delta \xi)$.
	Next, we extend these perturbations
	of the vertices to the edges.
	That is,
	we define
	\begin{equation*}
		\tilde V
		\colon
		\set[\big]{
			\lambda q_{i j l}(\xi) + (1 - \lambda) q_{i j m}(\xi)
			\given
			\set{i, j, l}, \set{i, j, m} \in k(\xi),
			l \ne m,
			\lambda \in [0,1]
		}
		\to
		\R^2
	\end{equation*}
	via
	\begin{equation*}
		\tilde V\parens[\big]{
			\lambda q_{i j l}(\xi) + (1 - \lambda) q_{i j m}(\xi)
		}
		=
		\lambda \delta q_{i j l} + (1 - \lambda) \delta q_{i j m}
		.
	\end{equation*}
	The map $\tilde V$ is Lipschitz,
	hence, by the Kirszbraun theorem, there is a Lipschitz extension $V : \R^2 \to \R^2$.
	By construction of $V$,
	we have
	\begin{equation*}
		D_i(\xi + t \delta \xi)
		=
		(I + t V)(D_i(\xi))
	\end{equation*}
	for all $t \ge 0$ small enough,
	since we assumed that $\delta \xi$ does not change the total configuration.
	Applying \cite[Theorem~5.2.2]{HenrotPierro2018}
	to $\Phi(t) = I + t V$
	yields
	\begin{align*}
		\tilde u_i'(\xi;\delta \xi)
		:=
		\left( \frac{\d}{\d t} \int_{D_i(\xi + t \delta \xi)} \varrho(x) \dx \right) \Bigg|_{t=0}
		=
		0
		+
		\int_{\partial D_i(\xi)} \varrho(x) n(x)^\top V(x) \d \HH^1(x)
		.
	\end{align*}
	Here, $n$ is the outer unit normal vector.
	The boundary $\partial D_i(\xi)$ consists
	of outer edges (subsets of $\partial D$)
	and inner edges.
	On the outer edges, we have $n(x)^\top V(x) = 0$,
	since \eqref{DC} ensures that
	perturbations of boundary vertices stay on the boundary,
	see \cref{lem:vertices affinely dependent}.
	For an inner edge
	we have indices
	$j, l, m$
	such that
	the points $q_{i j l}(\xi)$ and $q_{i j m}(\xi)$ are the vertices,
	i.e., $D_i(\xi)$ and $D_j(\xi)$ are incident with the edge.
	Consequently, the outer unit normal vector on this edge is
	$(a_{j} - a_i)/(\abs{a_{j} - a_i})$.
	Further, on the original edge
	we have
	\begin{equation*}
		(a_{j} - a_i)^\top
		\parens{
			\lambda q_{i j l}(\xi) + (1 - \lambda) q_{i j m}(\xi)
		}
		=
		\xi_{j} - \xi_i
		\qquad\forall \lambda \in [0,1]
		,
	\end{equation*}
	while the perturbed points satisfy
	\begin{equation*}
		(a_{j} - a_i)^\top
		\parens{
			\lambda (q_{i j l}(\xi) + \delta q_{i j l}) + (1 - \lambda) (q_{i j m}(\xi) + \delta q_{i j m})
		}
		=
		(\xi_{j} + \delta \xi_j) - (\xi_i + \delta \xi_i)
	\end{equation*}
	for all $\lambda \in [0,1]$.
	Taking the difference yields
	\begin{equation*}
		(a_{j} - a_i)^\top
		V\parens[\big]{
			\lambda q_{i j l}(\xi) + (1 - \lambda) q_{i j m}(\xi)
		}
		=
		(a_{j} - a_i)^\top
		\parens{
			\lambda \delta q_{i j l} + (1 - \lambda) \delta q_{i j m}
		}
		=
		\delta \xi_j - \delta \xi_i
	\end{equation*}
	for all $\lambda \in [0,1]$.
	Thus,
	\begin{align*}
		\tilde u_i'(\xi;\delta \xi)
		&=
		\sum_{s : i_s > 0}
		\frac{\delta \xi_{i_s} - \delta \xi_i}{\abs{a_{i_s} - a_i}}
		\int_{q_{i i_{s-1} i_s}(\xi)}^{q_{i i_s i_{s+1}}(\xi)}
		\varrho(x)
		\d \HH^1(x)
		=
		\Theta_i(\xi) \delta \xi
		,
	\end{align*}
	where we used the same notation as in \cref{def:theta}.
	This yields $\tilde u'(\xi; \delta \xi) = \Theta(\xi) \delta \xi$.
	In particular, $\tilde u$ has continuous partial derivatives,
	see \itemref{lem:properties of Theta:iii},
	hence $\tilde u$ is continuously differentiable.

	We continue with $W$.
	Considering \eqref{eq:W(xi) as sum of integrals over regions},
	the integral over $\abs{x}^2$ is constant as
	we have
	$\sum_{i=1}^n \int_{D_i(\xi)} \abs{x}^2 \dx = \int_D \abs{x}^2 \dx$.
	For the other addends,
	we argue as above and
	apply \cite[Theorem~5.2.2]{HenrotPierro2018} again.
	We obtain
	\begin{align*}
		W'(\xi;\delta \xi)
		&=
		\sum_{i=1}^n
		\sum_{s : i_s > 0}
		\frac{\delta \xi_{i_s} - \delta \xi_i}{\abs{a_{i_s} - a_i}}
		\int_{q_{i i_{s-1} i_s}(\xi)}^{q_{i i_s i_{s+1}}(\xi)}
		\varrho(x)
		(\abs{a_i}^2 - 2 a_i^\top x)
		\d \HH^1(x)
		.
	\end{align*}
	For the term involving $\abs{a_i}^2$,
	we can reuse the computation from above.
	For the other term,
	we use the set of inner edges $E(\xi)$
	from the proof of \cref{lem:properties of Theta}.
	In the above sum,
	an edge $(i,j) \in E(\xi)$ is considered twice
	and the factor $\delta \xi_{i_s} - \delta \xi_i$
	changes its sign.
	This yields the representation
	\begin{equation*}
		W'(\xi;\delta \xi)
		=
		\sum_{i=1}^n
		\abs{a_i}^2 \Theta_i(\xi) \delta \xi
		-2 \!
		\sum_{\set{i,j} \in E(\xi)}
		\frac{\delta \xi_{j} - \delta \xi_i}{\abs{a_{j} - a_i}}
		\int_{q_{i j}^1(\xi)}^{q_{i j}^2(\xi)}
		\varrho(x)
		(a_i - a_j)^\top x
		\d \HH^1(x)
		,
	\end{equation*}
	where 
	$q_{i j}^1(\xi)$ and $q_{i j}^2(\xi)$
	are the vertices of the edge $\set{i,j}$.
	Since the edge $\set{i, j}$ lies between the regions
	$D_i(\xi)$ and $D_j(\xi)$,
	all points $x$ on this edge
	satisfy
	$(a_j - a_i)^\top x = \xi_j - \xi_i$.
	Thus,
	\begin{align*}
		W'(\xi;\delta \xi)
		&=
		2A^\top \Theta(\xi) \delta \xi
		-2
		\sum_{\set{i,j} \in E(\xi)}
		\frac{\delta \xi_{j} - \delta \xi_i}{\abs{a_{j} - a_i}}
		\int_{q_{i j}^1(\xi)}^{q_{i j}^2(\xi)}
		\varrho(x)
		(\xi_i - \xi_j)
		\d \HH^1(x)
		\\
		&=
		2A^\top \Theta(\xi) \delta \xi
		+2
		\sum_{\set{i,j} \in E(\xi)}
		Q_{ij}(\xi) \xi^\top (e_i - e_j) (e_i - e_j)^\top \delta \xi
		\\
		&=
		2(A-\xi)^\top \Theta(\xi) \delta \xi,
	\end{align*}
	where we reused the notation from the proof of
	\itemref{lem:properties of Theta:i}.
	This yields continuous differentiability of $W$
	and $W'(\xi) = 2(A-\xi)^\top \Theta(\xi)$.
	Since $j$ is continuously differentiable,
	the definition of $r$ implies
	$J'(\xi) = \alpha r(\xi)^\top \Theta(\xi)$.
	Further, by the chain rule $r'(\xi) = \frac 1 \alpha \nabla^2 g(\tilde u(\xi)) \Theta(\xi) - I$ is valid.
\end{proof}

We compare our differentiability result \cref{thm:J_r_tilde u_W_differentiable_for_DC}
with corresponding results from the literature.

\begin{remark}
	\label{rem:other_results}
	We mention that \cref{thm:J_r_tilde u_W_differentiable_for_DC}
	is a special case of the general \cite[Theorem~1.3]{KitagawaMerigotThibert2019}.
	Note that the derivative of the function $\Phi$ therein
	corresponds to our function $\tilde u$,
	see \cite[(1.4)]{KitagawaMerigotThibert2019},
	and \cite[(1.8)]{KitagawaMerigotThibert2019}
	is our $\Theta$ from \cref{def:theta}.
	Therein, the main assumption regarding the differentiability
	is \cite[(1.7)]{KitagawaMerigotThibert2019},
	which in our language reads $u_d(D_i(\xi)) > 0$ for all $i = 1,\ldots,n$.

	In \cite[Proposition~2]{DeGournayKahnLebrat2018},
	also the differentiability with respect to the points $a_i$
	is proved
	and the differentiability assumption is slightly relaxed to
	$D_i(\xi) \ne \emptyset$ for all $i = 1,\ldots,n$.
	Finally,
	\cite[Proposition~3.4]{LebratDeGournayKahnWeiss2019}
	proves differentiability almost everywhere
	by showing that $\tilde u$ is differentiable at all points $\xi$
	such that for all $i = 1,\ldots,n$ we have
	$D_i(\xi) \ne \emptyset$ or $\varphi_\xi(x) > a_i^\top x - \xi_i$ for all $x \in D$.
	One can check that this condition is implied by \eqref{DC}.

	We included the above proof of \cref{thm:J_r_tilde u_W_differentiable_for_DC},
	since it is simpler than the proofs of
	\cite{KitagawaMerigotThibert2019,DeGournayKahnLebrat2018}.
\end{remark}

\begin{remark}
	\label{rem:differentiability}
	By means of some simple examples,
	we want to point out the differences between \eqref{DC}
	and the differentiability assumptions from
	\cite{KitagawaMerigotThibert2019,DeGournayKahnLebrat2018,LebratDeGournayKahnWeiss2019}.
	We choose $D = [-1,1]^2$.

	In the first example,
	we choose $n = 4$,
	\begin{equation*}
		a_1 =
		\begin{pmatrix}
			1 \\ 1
		\end{pmatrix}
		,\;
		a_2 =
		\begin{pmatrix}
			-1 \\ 1
		\end{pmatrix}
		,\;
		a_3 =
		\begin{pmatrix}
			0 \\ -1
		\end{pmatrix}
		,\;
		a_4 =
		\begin{pmatrix}
			0 \\ 0
		\end{pmatrix}
	\end{equation*}
	and $\xi \equiv 0$.
	For $\bar x = (0,0) \in D$ we have
	$I_\xi(\bar x) = \set{1,2,3,4}$,
	which shows that
	\eqref{DC}
	does not hold.
	Moreover,
	it can be checked that $D_4(\xi) = \emptyset$,
	but
	$\varphi_\xi(\bar x) = 0 = a_4^\top \bar x - \xi_4$,
	therefore the differentiability assumption from 
	\cite[Proposition~3.4]{LebratDeGournayKahnWeiss2019}
	is also violated.
	By a detailed analysis one can check that $\tilde u$
	is still differentiable at $\xi$, since
	$\abs{\tilde u_4(\xi + h) - \tilde u_4(\xi)} = \tilde u_4(\xi + h) = \mathcal{O}(h^2)$
	as $h \to 0$.

	Next, we consider $n = 4$,
	\begin{equation*}
		a_1 =
		\begin{pmatrix}
			1 \\ 0
		\end{pmatrix}
		,\;
		a_2 =
		\begin{pmatrix}
			-1 \\ 0
		\end{pmatrix}
		,\;
		a_3 =
		\begin{pmatrix}
			0 \\ 1
		\end{pmatrix}
		,\;
		a_4 =
		\begin{pmatrix}
			0 \\ -1
		\end{pmatrix}
	\end{equation*}
	and $\xi \equiv 0$.
	Again, 
	$I_\xi(\bar x) = \set{1,2,3,4}$
	and \eqref{DC} is violated.
	However, $D_i(\xi) \ne \emptyset$ for all $i$.
	Therefore,
	the conditions from
	\cite{KitagawaMerigotThibert2019,LebratDeGournayKahnWeiss2019}
	hold
	and this yields differentiability.

	In the third example,
	we choose $n = 3$,
	\begin{equation*}
		a_1 =
		\begin{pmatrix}
			1 \\ 0
		\end{pmatrix}
		,\;
		a_2 =
		\begin{pmatrix}
			-1 \\ 0
		\end{pmatrix}
		,\;
		a_3 =
		\begin{pmatrix}
			0 \\ 0
		\end{pmatrix}
	\end{equation*}
	and $\xi \equiv 0$.
	Now,
	$D_3(\xi) = \emptyset$,
	but we still have
	$\varphi_\xi(x) = 0 = a_3^\top x - \xi_3$
	for all $x$ belonging to
	the line segment $\set{0} \times [-1, 1]$.
	Further, we can check that
	$\tilde u_3(0,0,s) = -4 s$ for $s \in [-1,0]$
	and
	$\tilde u_3(0,0,s) = 0$ for $s \ge 0$.
	This shows that $\tilde u$ is not differentiable at $\xi$
	and, consequently, all the differentiability conditions are violated.

	Finally,
	we just mention that
	the situation from this last example is the only source of
	nondifferentiability.
	In fact,
	nondifferentiability occurs
	if and only if
	two regions $D_i(\xi)$ and $D_j(\xi)$,
	with $i,j \in \set{-k,\ldots,-1} \cup \set{1,\ldots, n}$,
	share an edge
	and a third index $k \not\in \set{i, j}$, $k > 0$,
	exists with $\varphi_\xi(x) = a_k^\top x - \xi_k$
	on the edge between $D_i(\xi)$ and $D_j(\xi)$.
	This characterization will be of no importance for the remainder.
\end{remark}

\begin{theorem}
	\label{thm:J_r_tilde_u_W_PC1_semismooth}
	The functions $J, r, \tilde u, W$ are $PC^1$ and semismooth.

	Further, for all $\xi \in \R^n$,
	$\Theta(\xi) \in \partial^C \tilde u(\xi)$,
	$\frac 1 \alpha \nabla^2 g(\tilde u(\xi)) \Theta(\xi) - I \in \partial^C r(\xi)$,
	$2(A-\xi)^\top \Theta(\xi) \in \partial^C W(\xi)$ and
	$\alpha r(\xi)^\top \Theta(\xi) \in \partial^C J(\xi)$.
\end{theorem}
\begin{proof}
	We consider the function $W$.
	The argument for the other functions is similar.

	We already know that $W$ is $C^1$ on each sector $\Xi^k$,
	see \cref{thm:J_r_tilde u_W_differentiable_for_DC}.
	Our goal is to extend the function $W$ from $\Xi^k$
	to a $C^1$ function $\Phi_k$ on $\R^n$.
	This implies $W(\xi) \in \set{\Phi_k(\xi) \given k = 1,\ldots,N}$
	for all $\xi \in \bigcup_{k = 1}^N \cl(\Xi^k) = \R^n$,
	i.e., $W$ is $PC^1$.
	For the extension of the function, we use
	Whitney's extension theorem,
	\cite[Thm.~I]{WhitneyExtension}.

	Let $\Xi^k$ be a sector.
	The function $W$ is Lipschitz continuous on $\cl(\Xi^k)$
	by \cref{thm:J_r_W_tilde_u_Lipschitz}.
	The derivative on $\Xi^k$ is $W'(\xi) = 2(A-\xi)^\top \Theta(\xi)$, see \cref{thm:J_r_tilde u_W_differentiable_for_DC}.
	Since $\Theta$ is uniformly continuous on $\Xi^k$,
	see \itemref{lem:properties of Theta:iii},
	it can be extended continuously to $\cl(\Xi^k)$
	and, consequently,
	the derivative $W'$ can be extended to a continuous function
	$\Psi_k$
	on $\cl(\Xi^k)$.

	In order to apply
	\cite[Thm.~I]{WhitneyExtension},
	we have to check the Taylor-like prerequisites.
	For any $\xi, \tilde \xi \in \Xi^k$,
	the differentiability of $W$ on the open and convex set $\Xi^k$
	implies
	the existence of $t \in [0,1]$ with
	\begin{equation*}
		W(\tilde \xi)
		=
		W(\xi)
		+
		W'(\xi + t (\tilde \xi - \xi)) (\tilde \xi - \xi).
	\end{equation*}
	By continuity and since $[0,1]$ is compact,
	for any $\xi, \tilde \xi \in \cl(\Xi^k)$,
	there exists $t \in [0,1]$ with
	\begin{equation*}
		W(\tilde \xi)
		=
		W(\xi)
		+
		\Psi_k(\xi + t (\tilde \xi - \xi)) (\tilde \xi - \xi).
	\end{equation*}
	This implies
	\begin{equation*}
		W(\tilde \xi)
		=
		W(\xi)
		+
		\Psi_k(\xi) (\tilde \xi - \xi)
		+
		R(\tilde \xi, \xi)
	\end{equation*}
	with
	\begin{equation*}
		R(\tilde \xi, \xi)
		=
		(\Psi_k(\xi + t (\tilde \xi - \xi)) - \Psi_k(\xi)) (\tilde \xi - \xi)
		.
	\end{equation*}
	Owing to the continuity of $\Psi_k$ on $\cl(\Xi^k)$,
	for each $\xi_0 \in \cl(\Xi^k)$,
	we have
	\begin{equation*}
		R(\tilde \xi, \xi)
		=
		\oo(\abs{\tilde \xi - \xi})
		\qquad
		\text{as } \xi, \tilde \xi \to \xi_0
		.
	\end{equation*}
	Thus, we can apply
	\cite[Thm.~I]{WhitneyExtension}
	and obtain a $C^1$ function $\Phi_k$ on $\R^n$
	which extends $W$.
	As explained above, this shows that $W$
	is $PC^1$.

	It remains to check that the given expression belong to
	the Clarke subdifferential.
	Let $\xi \in \R^n \setminus \bigcup_{i = 1}^N \Xi^i$ be arbitrary.
	We only verify $\Theta(\xi) \in \partial^C \tilde u(\xi)$.
	The other functions can be treated analogously.
	By \cref{thm:degenerated sectors are zero-set},
	the index set $\hat I := \set{i \in \set{1,\ldots,N} \given \xi \in \cl(\Xi^i)}$ is nonempty.
	We choose $i \in \hat I$, such that
	the cardinality of the configuration $k(\Xi^i)$ (see \cref{def:neu}) is minimal.
	Now, we choose an arbitrary $\hat \xi \in \Xi^i$ and consider
	$\xi_t := t \hat \xi + (1 - t) \xi$ for $t \in (0,1)$.
	Since $\Xi^i$ is open and convex, $\xi_t \in \Xi^i$ for all $t \in (0,1)$
	and we clearly have $\xi_t \to \xi$ as $t \searrow 0$.
	It remains to show that $\Theta(\xi_t) \to \Theta(\xi)$.
	Note that this does not simply follow from
	\itemref{lem:properties of Theta:iii}, since $\xi \not\in \Xi^i$.
	Due to the minimality of the cardinality of $k(\Xi^i)$,
	a region $D_i(\xi_t)$ is nonempty if and only if $D_i(\xi)$
	is nonempty, in particular, no new regions could appear.
	Since the coordinates $q_{i j l}(\xi_t)$
	appearing in the definition of $\Theta(\xi_t)$
	depend linearly on $t$, see \cref{def:q_ijl},
	we can check $\lim_{t \searrow 0} \Theta(\xi_t) = \Theta(\xi)$.
	Note that it can happen that a region $D_i(\xi_t)$ has more edges than the corresponding $D_i(\xi)$,
	but the lengths of these additional edges converge to $0$
	and, therefore, the additional integral appearing in the definition of $\Theta_{ik}(\xi_t)$
	converges to $0$ as well.
	The claim follows from the definition of the Clarke subdifferential in \cref{def:Clarke-Ableitung} and the differentiability in $\Xi^i$, see \cref{thm:J_r_tilde u_W_differentiable_for_DC}.
\end{proof}

\begin{remark}
	\label{rem:PC1}
	We emphasize that the partitioning of $\R^n$ into the sectors $\Xi^i$
	plays a crucial role in the proof of \cref{thm:J_r_tilde_u_W_PC1_semismooth}.
	We note that
	it does not simply follow from the almost-everywhere differentiability
	provided \cite[Proposition~3.4]{LebratDeGournayKahnWeiss2019}
	that the functions in \cref{thm:J_r_tilde_u_W_PC1_semismooth}
	are indeed $PC^1$.

	We illustrate this with an example.
	We consider $D = [-1, 3] \times [2, 2]$, $n = 5$,
	\begin{equation*}
		a_1 =
		\begin{pmatrix}
			0 \\ 0
		\end{pmatrix}
		,\;
		a_2 =
		\begin{pmatrix}
			2 \\ 1
		\end{pmatrix}
		,\;
		a_3 =
		\begin{pmatrix}
			1 \\ 2
		\end{pmatrix}
		,\;
		a_4 =
		\begin{pmatrix}
			-1 \\ -2
		\end{pmatrix}
		,\;
		a_5 =
		\begin{pmatrix}
			-3 \\ -1
		\end{pmatrix}
		.
	\end{equation*}
	For the presentation, we restrict $\xi$ to the two-dimensional subspace $\R^2 \times \set{0}^3$.
	The regions $D_i(\bar \xi)$ for $\bar \xi = (0, 1, 0, 0, 0)$
	are shown on the left-hand side of \cref{fig:differentiability}.
	Note that we have $D_1(\bar \xi) = \emptyset$,
	but $\varphi_{\bar \xi}(x) = a_1^\top x - \bar \xi_1$ for all $x \in \cl(D_3(\bar\xi)) \cap \cl(D_4(\bar\xi))$.
	Consequently, $W$ is not differentiable at $\bar \xi$.
	In fact, the points of non-differentiability within $\R^2 \times \set{0}^3$
	are precisely $\set{0} \times (0,\infty) \times \set{0}^3$.
	Note that for $\xi_2 \le 0$, the problematic middle edge
	$\cl(D_3(\xi)) \cap \cl(D_4(\xi))$
	vanishes.
	On the right-hand side of \cref{fig:differentiability},
	we show the function
	$(\xi_1, \xi_2) \mapsto W(\xi_1, \xi_2, 0, 0, 0) - 2.5 \xi_2^2$.
	The quadratic modification of the function $W$ enhances the visibility of the nondifferentiability.
	We can see that the points of differentiability within the region
	$[-1/4,1/4] \times [-1, 2] \times \set{0}^3$
	is a connected set.
	In particular, it is not possible to extend $W'$
	in a continuous way to the points at which $W$ is not differentiable.
	In the proof of \cref{thm:J_r_tilde_u_W_PC1_semismooth},
	we used the definition of the sectors $\Xi^i$
	in order to partition the points of differentiability into smaller, convex subsets
	and this allows us to show that $W$ is indeed $PC^1$.
\end{remark}
\begin{figure}[ht]
	\centering
	\includegraphics[scale=0.14]{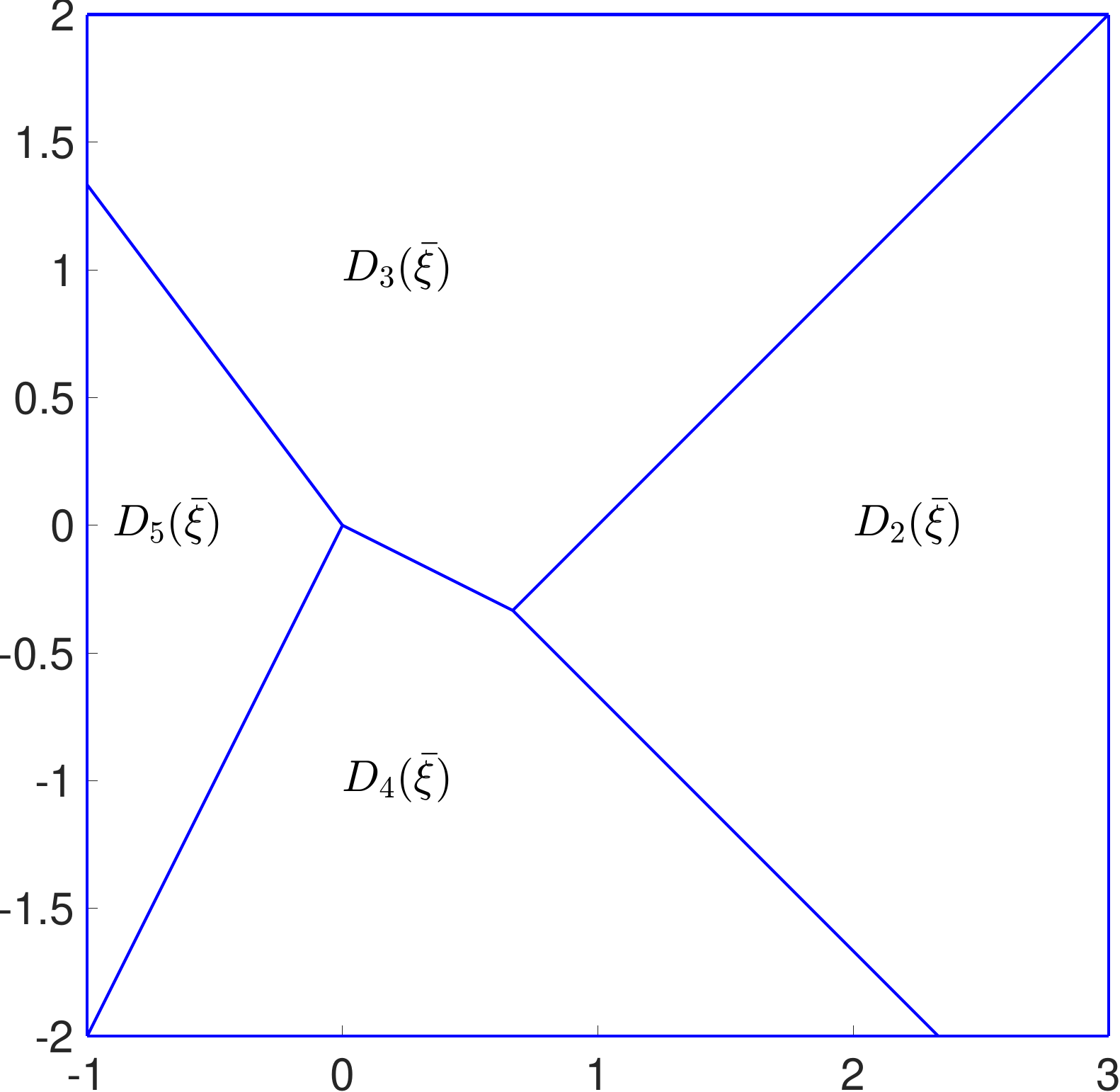}%
	\hspace{1cm}%
	\includegraphics[scale=0.14]{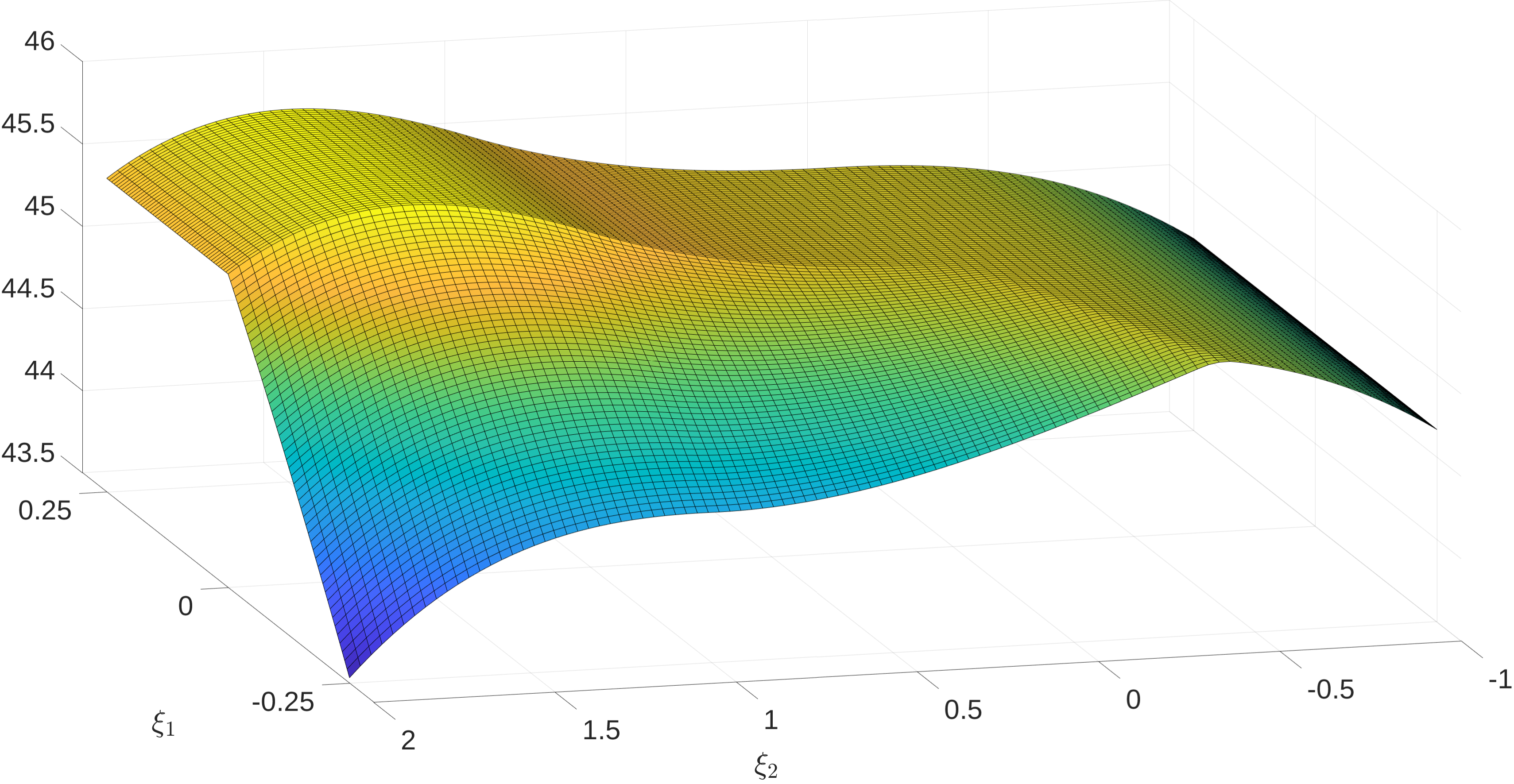}
	\caption{%
		In the left plot, we visualize the regions $D_i(\bar\xi)$ for $\bar\xi = (0,2,0,0,0)$.
		The nondifferentiability of $W(\cdot, \cdot, 0,0,0)$ is shown in the right plot.
	}
	\label{fig:differentiability}
\end{figure}

\subsection{The case \texorpdfstring{$u_d = \lambda^2$}{the Lebesgue measure}}
\label{subsec:Lebesgue_case}

In this section, we consider the important case that
$u_d$ is (the restriction of) the Lebesgue measure $\lambda^2$
on $D$, i.e.,
the density function satisfies $\varrho \equiv 1$
on $D$.
This allows us to simplify some of the integrals.

For the derivative $\Theta$ of $\tilde u$,
we can use the representation from
the proof of \itemref{lem:properties of Theta:i}
in terms of the inner edges.
This gives
\begin{equation*}
	\Theta(\xi)
	=
	-\sum_{\set{i,j} \in E(\xi)} \frac{\ell_{i j}}{\abs{a_i - a_j}}
	(e_i - e_j) (e_i - e_j)^\top,
\end{equation*}
where $\ell_{i j}$ is the length of the edge.

We consider the integrals in $W$, see \eqref{eq:W(xi) as sum of integrals over regions}.
First, the integral $\int_D \abs{x}^2 \d x$
can be calculated easily since $D$ is a polygon.
Second, we consider an integral with a constant integrand on an arbitrary (convex) polygon $P$
with boundary vertices
$q_1, \ldots, q_m = q_0$ (in counterclockwise order).
The function $f\colon \R^2 \to \R^2, x \mapsto \frac 1 2 x$,
satisfies $\divergence f = 1$.
Gauß' theorem and exact evaluation of the boundary integrals by the midpoint rule yield
\begin{align*}
	\int_P 1 \d x
	&=
	\int_{\partial P} f(x)^\top n(x) \d \HH^1(x)
	=
	\frac 1 2 \sum_{i=1}^m
	\int_{q_{i-1}}^{q_i}
	\frac{x^\top}{\abs{q_i - q_{i-1}}}
	\left(
		\begin{matrix}
			q_{i, 2} - q_{i-1, 2} \\
			q_{i-1, 1} - q_{i, 1}
		\end{matrix}
	\right)
	\d \HH^1(x)
	\\
	&=
	\frac 1 4 \sum_{i=1}^m
	\left(
		\begin{matrix}
			q_{i-1, 1} + q_{i, 1} \\
			q_{i-1, 2} + q_{i, 2}
		\end{matrix}
	\right)
	^\top
	\left(
		\begin{matrix}
			q_{i, 2} - q_{i-1, 2} \\
			q_{i-1, 1} - q_{i, 1}
		\end{matrix}
	\right)
	=
	\frac 1 2 \sum_{i=1}^m
	q_{i-1, 1} q_{i, 2} - q_{i, 1} q_{i-1, 2}
	,
\end{align*}
which is the so-called shoelace formula.
Third, we compute integrals of type $\int_{P} x^\top v \d x$ with $v \in \R^2$.
We set $f: \R^2 \to \R, x \mapsto x_1 x_2 (v_2, v_1)^\top$ and therefore
$\divergence f = x^\top v$.
Gauß' theorem gives
\[
	\int_{P} x^\top v \d x
	=
	\int_{\partial P} f(x)^\top n(x) \d \HH^1(x)
	=
	\sum_{i=1}^m
	\int_{q_{i-1}}^{q_i}
	f(x)^\top
	n(x)
	\d \HH^1(x)
	.
\]
As $f$ is quadratic on the edges, Simpson's rule yields the exact integral
\begin{align*}
	\MoveEqLeft[1]
	\int_{q_{i-1}}^{q_i} f(x)^\top n(x) \d \HH^1(x)
	=
	\left(
		\begin{matrix}
			v_2 \\
			v_1
		\end{matrix}
	\right)^\top
	\frac{1}{\abs{q_i - q_{i-1}}}
	\left(
		\begin{matrix}
			q_{i, 2} - q_{i-1, 2} \\
			q_{i-1, 1} - q_{i, 1}
		\end{matrix}
	\right)
	\int_{q_{i-1}}^{q_i} x_1 x_2 \d \HH^1(x)
	\\
	&=
	\frac 1 6
	\left(
		\begin{matrix}
			v_2 \\
			v_1
		\end{matrix}
	\right)^\top
	\left(
		\begin{matrix}
			q_{i, 2} - q_{i-1, 2} \\
			q_{i-1, 1} - q_{i, 1}
		\end{matrix}
	\right)
	\left(
		q_{i-1, 1} q_{i-1, 2} +
		4 \frac{q_{i-1, 1} + q_{i, 1}}{2} \frac{q_{i-1, 2} + q_{i, 2}}{2} +
		q_{i, 1} q_{i, 2}
	\right)
	\\
	&=
	\frac 1 6
	\left(
		\begin{matrix}
			v_2 \\
			v_1
		\end{matrix}
	\right)^\top
	\left(
		\begin{matrix}
			q_{i, 2} - q_{i-1, 2} \\
			q_{i-1, 1} - q_{i, 1}
		\end{matrix}
	\right)
	\left(
		2 q_{i,1} q_{i,2}
		+ 2 q_{i-1,1} q_{i-1,2}
		+ q_{i-1,1} q_{i,2}
		+ q_{i,1} q_{i-1,2}
	\right)
	.
\end{align*}
By summing up these integrals, some terms cancel out
and we get
\begin{align*}
	\int_{P} x^\top v \d x
	&=
	\sum_{i=1}^m
	\frac{v_2 (q_{i, 2} + q_{i-1, 2}) + v_1 (q_{i, 1} + q_{i-1, 1})}{6}
	\left(
		q_{i-1,1} q_{i,2} - q_{i,1} q_{i-1,2}
	\right)
	\\
	&=
	\sum_{i=1}^m
	\frac{v^\top (q_{i-1} + q_{i})}{3}
	\frac{q_{i-1,1} q_{i,2} - q_{i,1} q_{i-1,2}}{2}
	.
\end{align*}
For the calculation of $W$,
we sum up these integrals
with $P = D_i(\xi)$ and $v = a_i$
for all $i \in \set{1,\ldots,n}$.
For practical reasons we write this in terms of the edges, where $E$ denotes all edges.
Let $e \in E$ be an edge with vertices $q_1^e, q_2^e$.
Let $a_l, a_r \in \R^2$ be the vectors $a_i$ on the left/right side of $e$.
We set $a_l = 0$ or $a_r = 0$ for the non-existing side of boundary edges.
Previously,
every edge was considered twice,
thus we can write
\begin{align*}
	\begin{split}
		\sum_{i=1}^n \int_{D_i} x^\top a_i \d x
		=
		\frac 1 3
		\sum_{e \in E}
		\left(
			(a_l - a_r)^\top (q_1^e + q_2^e)
		\right)
		\frac{q_{1,1}^e q_{2,2}^e - q_{2,1}^e q_{1,2}^e}{2}
		.
	\end{split}
\end{align*}
For an inner edge,
the addend can be rewritten as
\(
	(\xi_l - \xi_r)
	\parens{q_{1,1}^e q_{2,2}^e - q_{2,1}^e q_{1,2}^e}
	.
\)

\section{Numerical algorithms}
\label{sec:algorithms}
In this section, we address algorithms for the solution of \eqref{P}.
We briefly argue that it seems not be a good idea to solve \eqref{P} directly,
although the function $W_2^2(u_d, \cdot)$ appearing in \eqref{P}
is convex
(and even differentiable under slight assumptions, see \cite[Propositions~7.17 and 7.18]{Santambrogio}).
The reason is that the evaluation of $W_2^2(u_d, u)$ for a given $u$ is very time consuming,
see the discussion after \eqref{eq:eval_transport_dist}.
Consequently, our algorithms work on problem \eqref{P(xi)}.

\subsection{Fixed-point iteration}
\label{subsec:fixed_point}

As an optimality condition for \eqref{P(xi)} we derived $r(\xi) = 0$, cf.\ \cref{lem:optimality of xi on J}.
This can be transformed into the fixed-point equation
$\xi = \xi + \tau r(\xi)$ with $\tau > 0$.
A possible algorithm for the solution
is the fixed-point iteration
$\xi_{k+1} := \xi_k + \tau_k r(\xi_k)$
with appropriate step sizes $\tau_k > 0$.
It is not clear, how to choose these step sizes.
We will see in \cref{lem:r(xi) is descent AND J(xi(t)) monotonically decreasing} below,
that the direction $r(\xi_k)$ is a non-ascent direction for $J$.
However, since $J$ is not $C^1$, Armijo step sizes could become
arbitrarily small and the usual convergence proof does not work.
Similarly, constant (but small) step sizes might not give a
decrease of $J$.

Instead, we can view the above iteration scheme
as a discretization of an ODE.
Indeed, the above update formula yields
$(\xi_{k+1} - \xi_k)/\tau_k = r(\xi_k)$.
This is an explicit Euler scheme for the ODE
\begin{align}
	\label{ODE(xi)}
	\tag{ODE\mbox{$_\xi$}}
	\begin{split}
		\xi'(t) &= r(\xi(t)),
		\\
		\xi(0) &= \xi_0.
	\end{split}
\end{align}

We state some simple properties for
\eqref{ODE(xi)}.

\begin{lemma}
	\label{lem:r Lipschitz AND xi-ODE unique solution AND difference of ODE solutions}
	The following properties hold.
	\begin{enumerate}
		\item The function $r$ is globally Lipschitz continuous with constant $L > 0$.
			\label{lem:r Lipschitz AND xi-ODE unique solution AND difference of ODE solutions:i}
		\item There exists a unique solution of \eqref{ODE(xi)} on $[0,\infty)$.
			\label{lem:r Lipschitz AND xi-ODE unique solution AND difference of ODE solutions:ii}
		\item Let $\xi, \tilde \xi$ be the solutions of \eqref{ODE(xi)} for given initial data $\xi_0, \tilde \xi_0$.
			Then, $\abs{\xi(t) - \tilde \xi(t)} \le \abs{\xi(0) - \tilde \xi(0)} e^{Lt}$ holds for $t \ge 0$.
			\label{lem:r Lipschitz AND xi-ODE unique solution AND difference of ODE solutions:iii}
	\end{enumerate}
\end{lemma}
\begin{proof}
	\ref{lem:r Lipschitz AND xi-ODE unique solution AND difference of ODE solutions:i}:
	This has been proven in \Cref{thm:J_r_W_tilde_u_Lipschitz}.

	\ref{lem:r Lipschitz AND xi-ODE unique solution AND difference of ODE solutions:ii}:
	This follows from the Picard--Lindelöf theorem.

	\ref{lem:r Lipschitz AND xi-ODE unique solution AND difference of ODE solutions:iii}:
	This can be done as usual using Gronwall's lemma.
\end{proof}
As announced, $r(\xi)$ yields a non-ascent direction for $J$.
\begin{lemma}
	\label{lem:r(xi) is descent AND J(xi(t)) monotonically decreasing}
	For every $\hat \xi \in \R^n$,
	we have $J'(\hat \xi;r(\hat \xi)) \le 0$.
	Further, let $\xi \colon [0,\infty) \to \R^n$ solve \eqref{ODE(xi)}.
	Then, $J(\xi(t))$ is monotonically decreasing in $t$
	and $J(\xi(t)) \to J_0$ as $t \to \infty$ for some $J_0 \in \R$.
\end{lemma}
\begin{proof}
	Since $\cl(\Xi^k)$ are closed and convex sets which cover $\R^n$,
	there exist $k \in \set{1,\ldots,N}$ and $\hat h > 0$
	such that
	$\hat \xi + h r(\hat \xi) \in \cl(\Xi^k)$ for all $h \in [0,\hat h]$.
	The proof of \cref{thm:J_r_tilde_u_W_PC1_semismooth} yields
	a function $J^k \in C^1(\R^n)$ with $J = J^k$
	on the set $\cl(\Xi^k)$.
	Consequently,
	\begin{equation*}
		J'(\hat \xi;r(\hat \xi)) 
		=
		\lim_{h \searrow 0} \frac{J^k(\hat \xi + h r(\hat \xi)) - J^k(\hat \xi)}{h}
		=
		(J^k)'(\hat \xi)r(\hat \xi)
		.
	\end{equation*}
	For a sequence
	$\seq{\hat \xi_i}_i \subset \Xi^k$ with $\hat \xi_i \to \hat \xi$
	we get
	\begin{align*}
		J'(\hat \xi;r(\hat \xi))
		= 
		(J^k)'(\hat \xi)r(\hat \xi)
		=
		\lim_{i \to \infty} (J^k)'(\hat \xi_i) r(\hat \xi_i)
		=
		\lim_{i \to \infty} \alpha r(\hat \xi_i)^\top \Theta(\hat \xi_i) r(\hat \xi_i)
		\le
		0
		,
	\end{align*}
	where we used \cref{thm:J_r_tilde u_W_differentiable_for_DC},
	\cref{thm:J_r_W_tilde_u_Lipschitz}
	and
	\itemref{lem:properties of Theta:i}.

	Let $\xi$ be the solution of \eqref{ODE(xi)}.
	Since $J$ is Lipschitz continuous by \cref{thm:J_r_W_tilde_u_Lipschitz},
	we can use the chain rule for directionally differentiable functions
	and find that the
	right directional derivative of
	$t \mapsto J(\xi(t))$ coincides with $J'(\xi(t); r(\xi(t)))$.
	Consequently,
	the first part of the proof shows that this
	directional derivative is nonpositive.
	Hence, the function $t \mapsto J(\xi(t))$ is decreasing.
	Since $J$ is bounded from below, see \cref{thm:uniqueness},
	this also implies the convergence.
\end{proof}

At this point, we would like to mention that \eqref{ODE(xi)}
is not a (sub)-gradient flow.
Indeed, at points $\xi$ satisfying \eqref{DC},
we have 
$r'(\xi) = \frac1\alpha \nabla^2 g(\tilde u(\xi)) \Theta(\xi) - I$
and this matrix is, in general, not symmetric;
consequently, $r$ cannot be a gradient.

Further, the map $J$ is in general not convex,
since the map $W$ is not convex, see also \cref{fig:differentiability}.
Thus, it is also not amenable to established convex optimization methods.

\begin{theorem}
	The solution of	\eqref{ODE(xi)} is bounded.
	In particular, $\xi$ has an accumulation point,
	i.e., there exists a sequence $\seq{t_i} \subset (0,\infty)$
	with $t_i \to \infty$
	and $\xi(t_i) \to \tilde \xi_0$ for some $\tilde \xi_0 \in \R^n$.
\end{theorem}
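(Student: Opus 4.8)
The plan is to exploit the affine-plus-bounded structure of $r$. Writing $r(w) = b(w) - w$ with $b(w) := A + \frac1\alpha \nabla g(\tilde u(w))$, the crucial observation is that $\tilde u(w) \in \Uad$ for every $w \in \R^n$ by \cref{lem:P and P(w) equivalent}, and $\Uad$ is compact; since $g \in C^2$, its gradient $\nabla g$ is bounded on $\Uad$, so that $\abs{b(w)} \le \abs{A} + \frac1\alpha \sup_{v \in \Uad}\abs{\nabla g(v)} =: C_0 < \infty$ uniformly in $w$. Note that the descent property of \cref{lem:r(w) is descent AND J(w(t)) monotonically decreasing} cannot be used here, because $J$ is not coercive — it is invariant under the shifts $w \mapsto w + c\,(1,\dots,1)^\top$ — so monotone decrease of $J(w(t))$ does not by itself bound the trajectory.

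First I would recall from \cref{lem:r Lipschitz AND w-ODE unique solution AND difference of ODE solutions} that a unique solution $w \colon [0,\infty) \to \R^n$ of \eqref{ODE(w)} exists, and then differentiate $t \mapsto \tfrac12\abs{w(t)}^2$ along this solution:
\[
	\frac{\d}{\dt}\,\tfrac12\abs{w(t)}^2
	=
	w(t)^\top r(w(t))
	=
	w(t)^\top b(w(t)) - \abs{w(t)}^2
	\le
	C_0 \abs{w(t)} - \abs{w(t)}^2 .
\]
Hence, whenever $\abs{w(t)} > C_0$ the right-hand side is strictly negative, so $\abs{w(t)}$ can never exceed $\max\set{\abs{w_0}, C_0}$, which gives the uniform bound $\abs{w(t)} \le \max\set{\abs{w_0}, C_0}$ for all $t \ge 0$. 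Alternatively, one may multiply \eqref{ODE(w)} by the integrating factor $e^t$ and integrate to obtain $w(t) = e^{-t} w_0 + \int_0^t e^{-(t-s)} b(w(s)) \d s$, from which $\abs{w(t)} \le e^{-t}\abs{w_0} + C_0(1 - e^{-t})$ follows directly.

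Finally, since the trajectory $\set{w(t) \given t \ge 0}$ is contained in the closed ball of radius $\max\set{\abs{w_0}, C_0}$, which is compact, any sequence $\seq{t_i} \subset (0,\infty)$ with $t_i \to \infty$ admits, by the Bolzano--Weierstraß theorem, a subsequence along which $w(t_i)$ converges to some $\tilde w_0 \in \R^n$; relabelling gives the asserted accumulation point. I do not expect a genuine obstacle in carrying this out; the only real subtlety is the one noted above, namely recognizing that neither coercivity nor the descent of $J$ is available, so that boundedness must be read off from the structure $r(w) = (\text{bounded}) - w$.
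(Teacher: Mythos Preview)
Your proof is correct and essentially matches the paper's argument: both rest on the observation that $b(w) := A + \tfrac{1}{\alpha}\nabla g(\tilde u(w))$ is uniformly bounded because $\tilde u(w) \in \Uad$ is compact and $g \in C^2$, and your integrating-factor alternative is exactly the paper's route (the paper applies the fundamental theorem to $e^t w(t)$ and obtains $\abs{w(t)} \le \abs{w_0} + \kappa$). Your additional Lyapunov-style estimate on $\tfrac12\abs{w(t)}^2$ is a minor variant of the same idea, and your remark on the non-coercivity of $J$ is a nice clarification the paper does not spell out.
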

\begin{proof}
	We rewrite the ODE as $\xi + \xi' = A + \frac 1 \alpha \nabla g(\tilde u(\xi))$.
	The right-hand side is bounded by a constant $\kappa$ using Weierstraß' theorem
	as $g$ is $C^2$, $\tilde u$ is continuous by \cref{thm:J_r_W_tilde_u_Lipschitz} and $\Uad$ is compact.
	Using the fundamental theorem of calculus on $t \mapsto e^t \xi(t)$
	we get
	\begin{align*}
		\abs{\xi(t)}
		&\le
		e^{-t} \left( e^0 \abs{\xi_0} + \int_0^t \abs*{e^s (\xi(s) + \xi'(s))} \d s \right)
		\le
		\abs{\xi_0} + e^{-t} \int_0^t e^s \kappa \d s
		\le
		\abs{\xi_0} + \kappa
		.
	\end{align*}
\end{proof}

Now, we are going to check
that accumulation points correspond to solutions.

\begin{theorem}
	\label{thm:conv_ODE}
	Let $\xi: [0,\infty) \to \R^n$ solve \eqref{ODE(xi)}.
	Then,
	$\operatorname{dist}( \xi(t), \tilde W) \to 0$
	as $t \to \infty$, where
	$\tilde W := \set{ \tilde \xi_0 \in \R^n \given \tilde u(\tilde \xi_0) \text{ satisfies } \eqref{eq:optimality} }$.
	If all roots of $r$ are isolated, then $\xi$ converges to a root of $r$.
\end{theorem}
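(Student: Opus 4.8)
The argument is of LaSalle type, adapted to the piecewise structure of $r$.

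\emph{Set-up.} By the previous theorem the trajectory $w$ is bounded, and by \cref{lem:r Lipschitz AND w-ODE unique solution AND difference of ODE solutions} the map $r$ is globally Lipschitz; hence \eqref{ODE(w)} induces a global flow $\Phi\colon\R\times\R^n\to\R^n$ depending continuously on the initial datum (Grönwall). Let $\Omega:=\bigcap_{T\ge 0}\cl(\set{w(t)\given t\ge T})$ be the $\omega$-limit set of $w$. As a nested intersection of nonempty compact connected sets it is nonempty, compact and connected; it is invariant under $\Phi$, since for $p=\lim_i w(t_i)$ with $t_i\to\infty$ we have $\Phi_s(p)=\lim_i w(t_i+s)\in\Omega$ for every $s\in\R$; and $\operatorname{dist}(w(t),\Omega)\to 0$ as $t\to\infty$. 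Since $J\circ w$ decreases to some $J_0\in\R$ by \cref{lem:r(w) is descent AND J(w(t)) monotonically decreasing} and $J$ is continuous, $J\equiv J_0$ on $\Omega$.

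\emph{Every point of $\Omega$ solves $r(p)=0$.} Fix $p\in\Omega$ and set $\gamma(s):=\Phi_s(p)$; by invariance $\gamma(s)\in\Omega$, so $J(\gamma(s))\equiv J_0$. As $J$ is $PC^1$, hence Lipschitz and directionally differentiable (\cref{lem:PC1-functions are semismooth}), and $\gamma'=r\circ\gamma$, the right derivative of the constant function $s\mapsto J(\gamma(s))$ equals $J'(\gamma(s);r(\gamma(s)))$, so $J'(\gamma(s);r(\gamma(s)))=0$ for all $s$. Fix $s$, write $q:=\gamma(s)$ and $v:=r(q)$; by a pigeonhole argument choose a sector $W^k$ and $h_i\downarrow 0$ with $q+h_iv\in\cl(W^k)$. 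Since $J$ and $\tilde u$ are continuous and coincide on $W^k$ with their $C^1$ extensions (see the proof of \cref{thm:J and r and tilde u are PC1 and semismooth}), letting $i\to\infty$ gives, with the \emph{same} index $k$,
\[
	J'(q;v)=(J^k)'(q)v=\alpha\,v^\top\Theta^k(q)\,v,
	\qquad
	\tilde u'(q;v)=\Theta^k(q)\,v,
\]
where $\Theta^k$ is the continuous extension of $\Theta|_{W^k}$ to $\cl(W^k)$ (well defined by \cref{lem:properties of Theta}), $\Theta^k(q)$ is symmetric negative semidefinite, and the derivative formulas are those of \cref{thm:J tilde u j and W differentiable for DC}. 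From $v^\top\Theta^k(q)v=0$ and negative semidefiniteness we get $\Theta^k(q)v=0$, hence $\tilde u'(\gamma(s);r(\gamma(s)))=0$ for every $s$; thus the continuous function $s\mapsto\tilde u(\gamma(s))$ has right derivative identically $0$ and is therefore constant, $\tilde u(\gamma(s))\equiv\tilde u(p)$.

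\emph{Conclusion.} Then $\gamma$ solves the linear ODE $\gamma'=b-\gamma$ with the constant $b:=A+\tfrac1\alpha\nabla g(\tilde u(p))$, so $\gamma(s)=b+e^{-s}(p-b)$ for all $s\in\R$; boundedness of $\Omega\ni\gamma(s)$ as $s\to-\infty$ forces $p=b$, i.e.\ $r(p)=A+\tfrac1\alpha\nabla g(\tilde u(p))-p=0$. Thus $\Omega\subseteq\set{w\in\R^n\given r(w)=0}\subseteq\tilde W$, the last inclusion by \cref{lem:optimality of w on J}, and $\operatorname{dist}(w(t),\Omega)\to 0$ gives $\operatorname{dist}(w(t),\tilde W)\to 0$. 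If moreover all zeros of $r$ are isolated, $\Omega$ is a nonempty connected subset of a discrete set, hence a singleton $\set{\bar w}$ with $r(\bar w)=0$, and then $w(t)\to\bar w$. The main obstacle I anticipate is the simultaneous sector selection: one must ensure that along one sequence $h_i\downarrow 0$ the points $q+h_iv$ stay in a single $\cl(W^k)$ on which both $J$ and $\tilde u$ equal their $C^1$ extensions, so that the two displayed identities hold with the \emph{same} $k$ — this is precisely what propagates $J'(p;r(p))=0$ to $\tilde u'(p;r(p))=0$. The remaining ingredients (the structure of $\omega$-limit sets, and ``right derivative $\equiv0\Rightarrow$ constant'' via the Dini-derivative monotonicity criterion) are routine.
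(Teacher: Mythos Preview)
Your proof is correct and follows the same LaSalle-type strategy as the paper: both pass from $J(\gamma(\cdot))\equiv J_0$ to $\tilde u'(\gamma(s);r(\gamma(s)))=0$ via the crucial simultaneous sector selection (one sequence $h_i\downarrow 0$ with $q+h_iv\in\cl(W^k)$, so that the same extension $\Theta^k$ governs both directional derivatives), and then solve the resulting linear ODE.

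The endgame is genuinely different, however. The paper only runs the auxiliary trajectory $\tilde w$ \emph{forward} in time, obtains $\tilde w(t)\to\bar w$ with $r(\bar w)=0$, and concludes merely that $\tilde u(\tilde w_0)=\tilde u(\bar w)$ satisfies \eqref{eq:optimality}; it does \emph{not} assert $r(\tilde w_0)=0$. You instead exploit the \emph{backward} invariance of the $\omega$-limit set: since $\gamma(s)=b+e^{-s}(p-b)$ must stay in the bounded set $\Omega$ as $s\to-\infty$, you force $p=b$, i.e.\ $r(p)=0$ for every $p\in\Omega$. This is a strictly stronger conclusion and it pays off immediately in the isolated-roots case: you get $\Omega\subset\{r=0\}$ connected in a discrete set, hence a singleton, whereas the paper has to introduce the auxiliary map $a(w)=A+\tfrac1\alpha\nabla g(\tilde u(w))$, argue that its accumulation points lie in $\{r=0\}$, use connectedness of the range of $a\circ w$ to pin down a single limit $\bar w$, and then solve the inhomogeneous linear ODE once more. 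Your route is shorter and more transparent here.
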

Recall that the convexity of $g$ implies the uniqueness of the root of $r$,
i.e., this root is isolated, see \cref{lem:optimality of xi on J}.

\begin{proof}
	In order to prove $\operatorname{dist}( \xi(t), \tilde W) \to 0$,
	it is sufficient to check that every accumulation point
	$\tilde \xi_0$ belongs to $\tilde W$.
	For an arbitrary accumulation point $\tilde \xi_0$,
	let $\tilde \xi \colon [0,\infty) \to \R^n$ be the solution of the ODE
	\begin{align}
		\label{eq:ODE tilde xi}
		\begin{split}
			\tilde \xi'(t) &= r(\tilde \xi(t)),
			\\
			\tilde \xi(0) &= \tilde \xi_0.
		\end{split}
	\end{align}
	Since $\tilde \xi_0$ is an accumulation point,
	we have $\xi(t_k) \to \tilde \xi_0$ as $k \to \infty$
	for some $t_k \to \infty$.
	Thus,
	we can apply
	\itemref{lem:r Lipschitz AND xi-ODE unique solution AND difference of ODE solutions:iii}
	to
	$\xi(t_k + \cdot)$ and $\tilde \xi(\cdot)$
	and get
	\begin{equation*}
		\xi(t_k + s) \to \tilde \xi(s)
		\qquad\text{for } k \to \infty
	\end{equation*}
	for all $s > 0$.
	Since $J$ is continuous, this yields
	\begin{equation*}
		J(\xi(t_k + s)) \to J(\tilde \xi(s))
		\qquad\text{for } k \to \infty
	\end{equation*}
	for all $s \ge 0$.
	However, from \cref{lem:r(xi) is descent AND J(xi(t)) monotonically decreasing},
	we get
	$J(\xi(t_k + s)) \to J_0$
	and
	$J(\xi(t_k)) \to J_0$.
	Putting everything together, this yields
	$J(\tilde \xi_0) = J(\tilde \xi(s))$
	for all $s > 0$.
	In particular,
	$J'(\tilde \xi(t);r(\tilde \xi(t))) = 0$ holds.

	Next, we check that $\tilde u(\tilde \xi(t))$
	is constant as well.
	We fix some $t \ge 0$.
	There exists a sequence $\seq{h_i}_i \subset (0,\infty)$
	with $h_i \to 0$
	such that
	$\tilde \xi(t) + h_i r(\tilde \xi(t)) \in \cl(\Xi^k)$
	for some fixed $k \in \set{1,\ldots,N}$.
	Consequently,
	\begin{equation*}
		0
		=
		J'(\tilde \xi(t); r(\tilde \xi(t)))
		=
		\lim_{i \to \infty}
		\frac{J^k(\tilde \xi(t) + h_i r(\tilde \xi(t))) - J^k(\tilde \xi(t))}{h_i}
		=
		(J^k)'( \tilde \xi(t) ) r(\tilde \xi(t))
		.
	\end{equation*}
	Here, $J^k$ are $C^1$ extensions from
	the restriction of $J$ to $\Xi^k$,
	see the proof of \cref{thm:J_r_tilde_u_W_PC1_semismooth}.
	From
	\cref{lem:properties of Theta,thm:J_r_tilde u_W_differentiable_for_DC}
	we get
	\begin{equation*}
		0
		=
		J'(\tilde \xi(t); r(\tilde \xi(t)))
		=
		\alpha r(\tilde \xi(t))^\top \Theta_k(\tilde \xi(t)) r(\tilde \xi(t))
		,
	\end{equation*}
	where $\Theta_k$ is the continuous extension to $\cl(\Xi^k)$
	of the restriction of $\Theta$ to $\Xi^k$.
	Similarly,
	we get
	\begin{equation*}
		\tilde u'(\tilde \xi(t); r(\tilde \xi(t)))
		=
		\Theta_k(\tilde \xi(t)) r(\tilde \xi(t)).
	\end{equation*}
	Since the matrix
	$\Theta_k(\tilde \xi(t))$
	is symmetric and negative semidefinite,
	combining the previous two equations
	yields
	$\tilde u'(\tilde \xi(t); r(\tilde \xi(t))) = 0$,
	i.e.,
	$\tilde u(\tilde \xi(\cdot)) =: \bar u$ is constant.
	Consequently,
	\eqref{eq:ODE tilde xi}
	implies
	\begin{equation*}
		\tilde \xi'(t) + \tilde \xi(t)
		=
		A + \frac1\alpha \nabla g(\bar u)
		=:
		\bar \xi
	\end{equation*}
	and the solution of this ODE satisfies
	\begin{equation*}
		\tilde \xi(t)
		\to
		\bar \xi
		.
	\end{equation*}
	This gives
	$\bar u = \tilde u(\bar \xi)$
	and
	$r(\bar \xi) = 0$.
	\Cref{lem:optimality of xi on J}
	implies that $\bar u = \tilde u(\tilde \xi_0)$ satisfies \eqref{eq:optimality}.

	Now, let solutions of $r(\bar \xi) = 0$ be isolated.
	We set $\overline{W} := \set{\bar \xi \in \R^n \given r(\bar \xi) = 0}$
	and
	$a(\xi) := A + \frac1\alpha \nabla g(\tilde u(\xi))$.
	The first part of the proof shows that
	$r(a(\tilde \xi_0)) = 0$
	holds for every
	accumulation point $\tilde \xi_0$.
	Consequently,
	every accumulation point of $a(\xi(t))$ lies in $\overline{W}$.
	As $\xi$ and $a$ are continuous and $\overline{W}$ contains only isolated points,
	it exists $\bar \xi \in \overline{W}$ with $a(\xi(t)) \to \bar \xi$.
	Therefore,
	$\xi'(t) + \xi(t) = a(\xi(t)) \to \bar \xi$, i.e., there is a function $R: [0,\infty) \to \R$ with $R(t) \to 0$ and $\xi'(t) + \xi(t) = \bar \xi + R(t)$.
	Consequently,
	\begin{equation*}
		\xi(t)
		=
		\bar \xi
		+
		e^{-t} \parens*{
			\xi(0)
			+
			\int_0^t R(s) e^s \d s
		}
		\to
		\bar \xi
		\in
		\overline{W}
	\end{equation*}
	and this shows the claim.
\end{proof}
This result is similar to LaSalle's invariance principle \cite[Thm.~3]{LaSalle}.
In our case, $J$ is similar to a Lyapunov function as $\frac \d \dt J(\xi(t)) \le 0$ but $J$ is not differentiable.
\cite[Thm.~3]{LaSalle} states that bounded solutions approach the set $M$
which is the set of starting points $\tilde \xi_0$ whose respective solutions $\tilde \xi(t)$ fulfill $J(\tilde \xi(t)) = C$ for some constant $C \in \R$ and whose solutions do not leave $M$.
In our case, this is just $\tilde W$.

Although the above result
shows convergence of the solutions of the ODE,
it is not clear
whether this implies the convergence
of the fixed-point iteration
$\xi_{k + 1} := \xi_k + \tau_k r(\xi_k)$
for some choice of the step sizes.
If $r$ would be differentiable,
one could use results from ODE theory
involving the eigenvalues of the Jacobian of $r$.
However, since $r$ is only $PC^1$,
it is not clear if these results carry over
to our situation.

We will see in the following that small constant step sizes are feasible in some situations.

\begin{theorem}
	\label{thm:gradient_method_constant_stepsize_R_linear}
	Let $\bar \xi \in \R^n$ be a zero of $r$ and $\bar u := \tilde u(\bar \xi)$.
	We assume that the matrix $\nabla^2 g(\bar u)$ is positive semidefinite.
	Then,
	there exist $\varrho, \tau_0 > 0$,
	such that for all $\xi_0 \in \R^n$ with $\abs{\xi_0 - \bar \xi} < \varrho$ and $\tau \in (0,\tau_0)$
	the sequence $\seq{\xi_k} \subset \R^n$ inductively defined via $\xi_{k+1} := \xi_k + \tau r(\xi_k)$ converges r-linearly towards $\bar \xi$.
\end{theorem}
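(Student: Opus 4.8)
The plan is to show that the fixed-point map $w \mapsto w + \tau r(w)$ is, for $\tau$ small enough, a contraction on a small ball around $\bar w$ with respect to a suitably weighted norm $\norm{\cdot}_{**}$ equivalent to the Euclidean one; $q$-linear convergence in $\norm{\cdot}_{**}$ then yields $r$-linear convergence in the Euclidean norm. Throughout write $G := \nabla^2 g(\bar u) \succeq 0$ and recall $\bar u = \tilde u(\bar w)$. First I would linearize using semismoothness: by \cref{thm:J and r and tilde u are PC1 and semismooth} the matrix $M_w := \frac1\alpha \nabla^2 g(\tilde u(w))\,\Theta(w) - I$ lies in $\partial^C r(w)$, and since $r$ is semismooth with $r(\bar w)=0$, \cref{lem:PC1-functions are semismooth} gives $r(w) = M_w(w-\bar w) + \oo(\abs{w-\bar w})$ as $w \to \bar w$. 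Hence $w_{k+1}-\bar w = \bigl(I + \tau M_{w_k}\bigr)(w_k-\bar w) + \tau\,\oo(\abs{w_k-\bar w})$, and replacing the Hessian at $w_k$ by the one at the limit,
\[
	I + \tau M_{w} = B(w) + \tau E_{w},
	\qquad
	B(w) := (1-\tau)I + \tfrac\tau\alpha G\,\Theta(w),
\]
where $\norm{E_w} = \tfrac1\alpha\norm{(\nabla^2 g(\tilde u(w)) - G)\,\Theta(w)} \to 0$ as $w \to \bar w$, using the continuity of $\nabla^2 g \circ \tilde u$ (from $g \in C^2$ and \cref{thm:J and tilde u and W are Lipschitz}), $\tilde u(\bar w)=\bar u$, and the uniform bound $\norm{\Theta(w)}\le C$ from \cref{lem:properties of Theta}.

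The technical core is the uniform estimate $\norm{B(w)}_{**} \le 1 - \tfrac\tau2$ for all $w$. I would decompose $\R^n = R \oplus N$ with $R := \operatorname{range}(G)$, $N := \ker(G)$ and orthogonal projections $P,Q$. Since $G$ vanishes on $N$ and restricts to a symmetric positive definite operator $\tilde G$ on $R$, the matrix $B(w)$ is block upper triangular for this splitting: its $N$-block is $(1-\tau)I_N$, its $R$-block is $B_{RR}(w) = (1-\tau)I_R + \tfrac\tau\alpha \tilde G\,\Theta_{RR}(w)$, and the coupling block $\tfrac\tau\alpha\tilde G\,\Theta_{RN}(w)$ has norm at most $\tfrac\tau\alpha\norm{G}C$. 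Because $\Theta(w)$ is symmetric negative semidefinite with $\norm{\Theta(w)}\le C$ (\cref{lem:properties of Theta}), $B_{RR}(w)$ is self-adjoint with respect to $\dual{\cdot}{\cdot}_{\tilde G^{-1}}$ on $R$, and the eigenvalues of $\tilde G\,\Theta_{RR}(w)$ — which equal those of the symmetric negative semidefinite matrix $\tilde G^{1/2}\Theta_{RR}(w)\tilde G^{1/2}$ — lie in $[-\norm{G}C, 0]$; hence $\norm{B_{RR}(w)}_{\tilde G^{-1}} \le 1-\tau$ once $\tau < \tau_0 := 2/(\norm{G}C/\alpha + 2)$. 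Defining $\norm{x}_{**} := \norm{Px}_{\tilde G^{-1}} + \beta\,\abs{Qx}$ with $\beta$ large enough to swallow the coupling block (concretely $\beta := 2\norm{G}C\,\norm{\tilde G^{-1}}^{1/2}/\alpha$), a short triangle-inequality computation gives $\norm{B(w)x}_{**} \le (1-\tfrac\tau2)\norm{x}_{**}$ for all $x \in \R^n$ and all $w \in \R^n$, with $\norm{\cdot}_{**}$ equivalent to $\abs{\cdot}$.

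It remains to close the argument. Combining the two displays, for any $\varepsilon > 0$ there is $\delta>0$ such that $\abs{w_k-\bar w} < \delta$ forces $\norm{w_{k+1}-\bar w}_{**} \le (1 - \tfrac\tau2 + \varepsilon\tau)\norm{w_k-\bar w}_{**}$, where $\varepsilon\tau$ absorbs both $\tau\norm{E_{w_k}}$ and the $\oo$-remainder after passing to the equivalent norm; both are made small by shrinking $\delta$. Taking $\varepsilon = \tfrac14$ gives the contraction factor $1-\tfrac\tau4$. Finally, choose $\varrho>0$ so small that $\abs{w_0-\bar w}<\varrho$ places $w_0$ inside the $\norm{\cdot}_{**}$-ball on which this holds; that ball is then forward invariant, so induction yields $\norm{w_k-\bar w}_{**} \le (1-\tfrac\tau4)^k\norm{w_0-\bar w}_{**}$, and by norm equivalence $\abs{w_k-\bar w} \le c\,(1-\tfrac\tau4)^k\abs{w_0-\bar w}$, which is the asserted $r$-linear convergence (with rate $1-\tfrac\tau4$).

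I expect the main obstacle to be the uniform contractivity of $B(\cdot)$ when $\nabla^2 g(\bar u)$ is only positive semidefinite: if $G$ is singular there is provably no single inner product on $\R^n$ making $\tfrac1\alpha G\,\Theta(w)$ self-adjoint for all $w$ simultaneously, so one is forced to split $\R^n$ along $\operatorname{range}(G)\oplus\ker(G)$ and to control the off-diagonal coupling through the relative weight $\beta$ of the two subspaces in $\norm{\cdot}_{**}$. A secondary subtlety is that $\nabla^2 g(\tilde u(w))$ need not be positive semidefinite for $w$ near $\bar w$, which is precisely why one linearizes around $B(w)$ (built from the limiting Hessian $G$) rather than around $M_w$ directly, pushing the error $E_w$ into the $\varepsilon$-budget. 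All other ingredients — semismoothness of $r$, the sign and norm bounds on $\Theta$, and continuity of $w \mapsto \nabla^2 g(\tilde u(w))$ — are already available.
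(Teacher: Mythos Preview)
Your proof is correct and follows essentially the same strategy as the paper: both arguments construct a norm adapted to the splitting $\R^n = \operatorname{range}(G) \oplus \ker(G)$, weight the range part by $G^{-1}$ and the kernel part by a free constant chosen large enough to absorb the coupling, and then show that $w \mapsto w + \tau r(w)$ contracts in this norm for small $\tau$.

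The one noteworthy technical difference is the shape of the norm and how the cross-term is handled. The paper uses the inner-product Lyapunov function $V(w) = \tfrac12\,\delta w^\top N\,\delta w$ with $N = U^\top \operatorname{diag}(M^{-1},\kappa I)\,U$ and bounds $\delta w^\top N M_w\,\delta w$ directly, invoking Young's inequality to control the off-diagonal contribution of $\Theta$. You instead take the $\ell^1$-type combination $\norm{x}_{**} = \norm{Px}_{\tilde G^{-1}} + \beta\,\abs{Qx}$ and exploit that $G\Theta(w)$ is block upper triangular in the range/kernel splitting (since $G$ annihilates $N$); the cross-term then appears only once and is absorbed by the triangle inequality after choosing $\beta$. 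This is a clean alternative that sidesteps Young's inequality, at the cost of working with a norm that is not induced by an inner product. Both routes yield the same $r$-linear rate. One cosmetic point: your formula for $\beta$ presumes $G \ne 0$; the degenerate case $G = 0$ (which the paper treats separately at the very end) is trivial in your framework as well, since then $B(w) = (1-\tau)I$ and any $\beta > 0$ works.
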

\begin{proof}
	As $\nabla^2 g(\bar u)$ is symmetric (thus diagonizable) with non-negative eigenvalues,
	there exist
	an eigenvalue decomposition
	$\nabla^2 g(\bar u) = U^\top \Lambda U$,
	i.e.,
	$U \in \R^{n \times n}$ is orthogonal and
	$\Lambda = \diag(\lambda_1, \ldots, \lambda_n)$
	with eigenvalues $\lambda_1 \ge \ldots \ge \lambda_i > 0 = \lambda_{i+1} = \ldots = \lambda_n$ for some $i \in \set*{0, \ldots, n}$.
	We first consider the case $\lambda_1 > 0$
	which corresponds to $i \ge 1$.
	We define s.p.d.\ matrices $M \in \R^{i \times i}$ and $N \in \R^{n \times n}$
	with
	\[
		\Lambda
		=
		\begin{pmatrix}
			M & 0 \\
			0 & 0
		\end{pmatrix}
		,
		\qquad
		N
		:=
		U^\top
		\begin{pmatrix}
			M^{-1} & 0 \\
			0 & \kappa I
		\end{pmatrix}
		U
		,
	\]
	where $I$ is the $(n-i)$-dimensional identity matrix and
	$\kappa :=  \frac {\lambda_1} {2 \alpha^2} \max_{\xi \in \R^n} \norm{\Theta(\xi)}^2 + \frac 1 {2 \lambda_1}$.
	Note that $\kappa < \infty$ due to \itemref{lem:properties of Theta:ii}.
	Let $\hat \Lambda(\xi) \in \R^{n \times n}$ be the matrix such that
	$\nabla^2 g(\tilde u(\xi)) = U^\top \hat \Lambda(\xi) U$.

	We define the Lyapunov function
	$V(\xi) := \frac 1 2 \abs{\delta \xi}_N^2 := \frac 1 2 \delta \xi^\top N \delta \xi$
	where $\delta \xi := \xi - \bar \xi$.	

	We choose $\varrho > 0$ such that
	for all $\xi$ with $\abs{\xi-\bar \xi} < \varrho$
	we have
	$\frac 1 \alpha \norm{N U^\top (\hat \Lambda(\xi) - \Lambda) U \Theta(\xi)} \le \frac 1 {4 \lambda_1}$ and
	$\abs{N R(\delta \xi)} / \abs{\delta \xi} \le \frac 1 {8 \lambda_1}$,
	where $R$ is the remainder in the Taylor-like expansion of $r$ at $\bar \xi$,
	cf.\ \cref{lem:PC1-functions are semismooth}, i.e.,
	\[
		r(\xi)
		=
		0
		+
		\parens*{
			\frac 1 \alpha U^\top \hat \Lambda(\xi) U \Theta(\xi) - I
		}\delta \xi + R(\delta \xi)
		.
	\]
	Let $(U \delta \xi)_1$ be the first $i$ components of $U \delta \xi$ and $(U \delta \xi)_2$ the last $n-i$ components.
	We start with the bound
	\begin{align*}
		\MoveEqLeft
		\delta \xi^\top N \parens*{\frac 1 \alpha U^\top \hat \Lambda(\xi) U \Theta(\xi) - I} \delta \xi
		\\
		&=
		\delta \xi^\top N \parens*{\frac 1 \alpha U^\top \Lambda U \Theta(\xi) - I} \delta \xi
		+
		\delta \xi^\top N \frac 1 \alpha  U^\top (\hat \Lambda(\xi) - \Lambda) U \Theta(\xi) \delta \xi
		\\
		&\le
		\frac 1 \alpha \delta \xi^\top U^\top
		\begin{pmatrix}
			I & 0 \\
			0 & 0
		\end{pmatrix}
		U \Theta(\xi) U^\top U \delta \xi
		-
		\abs{\delta \xi}_N^2
		+ \frac {\abs{\delta \xi}^2} {4 \lambda_1} 
		\\
		&\le
		\frac 1 \alpha 
		\begin{pmatrix}
			(U \delta \xi)_1 \\
			0
		\end{pmatrix}
		^\top
		U \Theta(\xi) U^\top
		\begin{pmatrix}
			(U \delta \xi)_1 \\
			(U \delta \xi)_2
		\end{pmatrix}
		- \abs{\delta \xi}_N^2
		+ \frac {\abs{\delta \xi}^2} {4 \lambda_1} 
		.
	\end{align*}
	The matrix $U \Theta(\xi) U^\top$ is symmetric and negative
	semidefinite, see \itemref{lem:properties of Theta:i}, thus
	the upper component $(U \delta \xi)_1$ yields an upper bound
	of zero. We continue with
	\begin{align*}
		\MoveEqLeft
		\delta \xi^\top N \parens*{\frac 1 \alpha U^\top \hat \Lambda(\xi) U \Theta(\xi) - I} \delta \xi
		\\
		&\le
		0
		+ \frac 1 \alpha \norm{U \Theta(\xi) U^\top} \abs{(U \delta \xi)_1} \abs{(U \delta \xi)_2}
		- \frac { \abs{(U \delta \xi)_1}^2} {\lambda_1}
		- \kappa \abs{(U \delta \xi)_2}^2
		+ \frac {\abs{\delta \xi}^2} {4 \lambda_1} 
		\\
		&\le
		\left( \frac {\abs{(U \delta \xi)_1}^2} {2 \lambda_1}
		+ \frac{\lambda_1 \norm{\Theta(\xi)}^2 \abs{(U \delta \xi)_2}^2 }{2 \alpha^2} \right)
		- \frac {\abs{(U \delta \xi)_1}^2} {\lambda_1}
		- \kappa \abs{(U \delta \xi)_2}^2
		+ \frac {\abs{\delta \xi}^2} {4 \lambda_1}
		\\
		&=
		- \frac {\abs{(U \delta \xi)_1}^2} {2 \lambda_1}
		- \frac {\abs{(U \delta \xi)_2}^2} {2 \lambda_1}
		+ \frac {\abs{\delta \xi}^2} {4 \lambda_1}
		=
		-\frac{\abs{\delta \xi}^2}{4 \lambda_1}
		.
	\end{align*}
	In the third line, we used Young's inequality.
	Let $L$ be the Lipschitz constant of $r$
	and
	we recall $r(\bar \xi) = 0$.
	We set
	$\tau_0 := \frac 1 {4 \norm{N} \lambda_1 L^2}$.
	For $\tau < \tau_0$, we obtain
	\begin{align*}
		\MoveEqLeft
		V(\xi+\tau r(\xi)) - V(\xi)
		=
		\tau \delta \xi^\top N r(\xi)
		+
		\frac{\tau^2}{2} \abs{r(\xi)}_N^2
		\\
		&\le
		\tau \delta \xi^\top N
		\left( \left( \frac 1 \alpha U^\top \hat \Lambda(\xi) U \Theta(\xi) - I \right) \delta \xi
		+ R(\delta \xi) \right)
		+
		\frac{\tau^2}{2} \abs{r(\xi) - r(\bar \xi)}_N^2
		\\
		&\le
		\tau \left( \frac {\abs{N R(\delta \xi)}}{\abs{\delta \xi}} -\frac 1 {4\lambda_1}  \right)
		\abs{\delta \xi}^2
		+
		\frac{\tau^2}{2} \norm{N} L^2 \abs{\delta \xi}^2
		\le
		\left(\frac \tau 2 \norm{N} L^2  - \frac 1 {8 \lambda_1} \right)
		\tau \abs{\delta \xi}^2
		.
	\end{align*}
	The above choice of $\tau$
	yields that
	$\beta := (\frac 1 {8 \lambda_1} - \frac \tau 2 \norm{N} L^2) \tau > 0$.
	By the equivalence of norms in $\R^n$ we get the existence of $\vartheta > 0$ with
	$\abs{\delta \xi} \ge \vartheta \abs{\delta \xi}_N$.
	Thus, we obtain
	\[
		V(\xi + \tau r(\xi))
		\le
		V(\xi) - \beta \abs{\delta \xi}^2
		\le
		V(\xi) - \beta \vartheta^2 \abs{\delta \xi}_N^2
		=
		(1-2 \beta \vartheta^2) V(\xi)
	\]
	and, in turn, $\abs{\xi_{k+1} - \bar \xi}_N \le \sqrt{1-2\beta \vartheta^2} \abs{\xi_k - \bar \xi}_N$.
	This shows
	q-linear convergence in the $N$-norm
	and, consequently,
	r-linear convergence in the Euclidean norm.

	It remains to
	consider the case $\lambda_1 = 0$, i.e., $\nabla^2 g(\bar u) = 0$.
	We can choose the Lyapunov function $V(\xi) := \frac 1 2 \abs{\delta \xi}^2$.
	With arguments similar to those above,
	one can again check the convergence.
\end{proof}

\subsection{Semismooth Newton algorithm}
\label{subsec:Newton on r}

The semismooth Newton method aims to solve
the operator equation
\begin{equation}
	\label{eq:operator equation Semismooth Newton}
	G(x) = 0,
\end{equation}
where $G \colon X \to Y$ is a mapping between Banach spaces $X,Y$.
We say that $G$ is Newton differentiable
with derivative $DG \colon X \to \mathcal{L}(X, Y)$
at $\bar x \in X$
if
\begin{equation*}
	\frac{ \norm{ G(x) - G(\bar x) - DG(x)(x - \bar x)}_Y }
	{\norm{x - \bar x}_X}
	\to
	0
	\qquad
	\text{as } x \to \bar x
	.
\end{equation*}
Note that \cref{lem:PC1-functions are semismooth}
shows that $PC^1$ functions
are Newton differentiable
for a single-valued selection $D f$
from the generalized Jacobian $\partial^C f$.

Given an initial guess $x_0 \in X$,
we define the semismooth Newton iteration via
\begin{equation}
	\label{eq:SSN}
	x_{k + 1}
	:=
	x_k - DG(x_k)^{-1} G(x_k)
	.
\end{equation}
Here, we need invertibility of $DG(x_k)$.

\begin{theorem}[\texorpdfstring{\cite[Thm.~2.12]{OptimizationBanachspaces}}{}]
	\label{thm:Semismooth Newton converges q-superlinearly}
	Let $\bar x \in X$ be a solution of \eqref{eq:operator equation Semismooth Newton}.
	We assume that $G$ is Newton differentiable at $\bar x$
	with derivative $DG$.
	Further, we suppose that there exists $C > 0$
	such that
	$DG(x)$ is invertible with $\norm{DG(x)^{-1}} \le C$ for all $x$
	in a neighborhood of $\bar x$.
	Then, there exists $\delta > 0$, such that
	for all $x_0 \in X$ with $\norm{x - \bar x}_X \le \delta$,
	the sequence $\seq{x_k} \subset X$ defined via
	\eqref{eq:SSN}
	converges towards
	$\bar x \in X$ $q$-superlinearly.
\end{theorem}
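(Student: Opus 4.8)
The plan is to run the classical local convergence argument for Newton-type iterations, with the Newton-differentiability estimate playing the role that the second-order Taylor remainder plays when $G$ is smooth. The whole proof reduces to a single error recursion combined with the uniform bound on $DG(x)^{-1}$.

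First I would derive the error recursion. Subtracting $\bar x$ in \eqref{eq:SSN} and using $G(\bar x) = 0$ gives
\[
	x_{k+1} - \bar x
	=
	- DG(x_k)^{-1}\bigl( G(x_k) - G(\bar x) - DG(x_k)(x_k - \bar x) \bigr),
\]
hence
\[
	\norm{x_{k+1} - \bar x}_X
	\le
	\norm{DG(x_k)^{-1}} \, \norm{G(x_k) - G(\bar x) - DG(x_k)(x_k - \bar x)}_Y .
\]
By hypothesis the first factor is bounded by $C$ on some neighborhood $U$ of $\bar x$, and by Newton differentiability at $\bar x$ the second factor is $\oo(\norm{x_k - \bar x}_X)$ as $x_k \to \bar x$.

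Next I would fix the radius and argue by induction. Applying Newton differentiability with the prescribed constant $\tfrac1{2C}$, choose $\delta > 0$ so small that the closed ball $B_\delta(\bar x)$ lies in $U$ and $\norm{G(x) - G(\bar x) - DG(x)(x - \bar x)}_Y \le \tfrac1{2C}\norm{x - \bar x}_X$ for all $x \in B_\delta(\bar x)$. If $x_k \in B_\delta(\bar x)$, the two displays above give $\norm{x_{k+1} - \bar x}_X \le \tfrac12 \norm{x_k - \bar x}_X$, so in particular $x_{k+1} \in B_\delta(\bar x)$ again. Thus for every start $x_0$ with $\norm{x_0 - \bar x}_X \le \delta$ the whole sequence stays in $B_\delta(\bar x)$ and $\norm{x_k - \bar x}_X \le 2^{-k}\norm{x_0 - \bar x}_X \to 0$. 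For the q-superlinear rate I would then bootstrap: given $\varepsilon > 0$, Newton differentiability supplies $\delta_\varepsilon \in (0,\delta]$ on which the remainder is bounded by $\tfrac{\varepsilon}{C}\norm{\cdot}_X$; since $x_k \to \bar x$, eventually $x_k \in B_{\delta_\varepsilon}(\bar x)$ and then $\norm{x_{k+1} - \bar x}_X \le \varepsilon \norm{x_k - \bar x}_X$, i.e.\ $\norm{x_{k+1} - \bar x}_X / \norm{x_k - \bar x}_X \to 0$.

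The argument is essentially routine; the only place requiring care is the bookkeeping of neighborhoods, namely intersecting the neighborhood on which $\norm{DG(\cdot)^{-1}} \le C$ with the (generally smaller and $\varepsilon$-dependent) neighborhoods produced by Newton differentiability, and checking that the resulting contraction factor is genuinely below $1$ so that $B_\delta(\bar x)$ is forward invariant under the iteration. This self-improving structure is exactly what upgrades plain convergence to q-superlinear convergence, and it is the one step I would write out carefully rather than treat as obvious.
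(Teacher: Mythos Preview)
Your argument is correct and is exactly the standard proof of this result; the paper does not give its own proof but simply cites \cite[Thm.~2.12]{OptimizationBanachspaces}, whose proof proceeds in precisely the way you outline. There is nothing to add.
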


Due to \cref{thm:J_r_tilde_u_W_PC1_semismooth},
the function $r$
is Newton differentiable on $\R^n$ with derivative
\begin{equation}
	\label{eq:Dr}
	Dr(\xi) := \frac 1 \alpha \nabla^2 g(\tilde u(\xi)) \Theta(\xi) - I
	.
\end{equation}
In order to apply the semismooth Newton method,
we have to show the uniform invertibility of $Dr(\xi)$.
This can be guaranteed in case $g$ is convex.

\begin{lemma}
	\label{lem:inverses S^xi in Newton method bounded}
	We assume that $g$ is convex.
	For all $\xi \in \R^n$, the matrix $Dr(\xi)$ is invertible.
	Further, it exists $C > 0$ such that $\norm{Dr(\xi)^{-1}} \le C$ for all $\xi \in \R^n$.
\end{lemma}
\begin{proof}
	We consider $-Dr(\xi) = \frac 1 \alpha \nabla^2 g(\tilde u(\xi)) (-\Theta(\xi)) + I$.
	The convexity of $g$ and \itemref{lem:properties of Theta:i} yield that
	both
	$\nabla^2 g(\tilde u(\xi))$ and $-\Theta(\xi)$ are symmetric and positive semidefinite.
	Consequently,
	\cite[Theorem~3.1]{Veselic_2015}
	implies that $Dr(\xi)$ is invertible and
	\begin{equation*}
		\norm{ Dr(\xi)^{-1} }
		\le
		1 + \frac1\alpha \norm{\nabla^2 g(\tilde u(\xi))} \norm{\Theta(\xi)}
		.
	\end{equation*}
	\itemref{lem:properties of Theta:ii} shows that $\Theta$ is bounded.
	The Hessian of $g$ is bounded as it is continuous by assumption on $g$ and $\tilde u(\xi) \in \Uad$ where $\Uad$ is compact.
\end{proof}

Owing to the results above,
we get the convergence of the semismooth Newton method
in the case that $g$ is convex.
We emphasize that we do not need strong convexity of $g$.
\begin{theorem}
	\label{thm:convergence of Newton method on r}
	We assume that $g$ is convex
	and let
	$\bar \xi \in \R^n$ with $r(\bar \xi) = 0$ be given.
	Then, there exists $\delta > 0$,
	such that for all $\xi_0 \in \R^n$ with $\abs{\xi_0 - \bar \xi} \le \delta$
	the sequence $\seq{\xi_k}_{k \in \N_0} \subset \R^n$ generated by
	\begin{equation*}
		\xi_{k + 1}
		:=
		\xi_k - Dr(\xi_k)^{-1} r(\xi_k)
	\end{equation*}
	converges $q$-superlinearly towards $\bar \xi$.
\end{theorem}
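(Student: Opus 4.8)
The plan is to invoke the abstract convergence result \cref{thm:Semismooth Newton converges q-superlinearly} with $X = Y = \R^n$, $G := r$, $\bar x := \bar w$, and $DG := Dr$ as in \eqref{eq:Dr}. Thus two hypotheses must be verified: Newton differentiability of $r$ at $\bar w$ with derivative $Dr$, and uniform boundedness of $Dr(x)^{-1}$ on a neighborhood of $\bar w$. Essentially no new work is needed; the substance has already been assembled in \cref{thm:J and r and tilde u are PC1 and semismooth} and \cref{lem:inverses S^w in Newton method bounded}.

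For the first hypothesis I would argue as follows. By \cref{thm:J and r and tilde u are PC1 and semismooth}, $r$ is $PC^1$, hence semismooth, and $Dr(w) = \frac 1 \alpha \nabla^2 g(\tilde u(w)) \Theta(w) - I \in \partial^C r(w)$ for every $w \in \R^n$. Writing $x = \bar w + s$ and using the semismoothness estimate of \cref{lem:PC1-functions are semismooth} (which supremizes over the whole Clarke Jacobian at the perturbed point), we obtain
\begin{equation*}
	\abs{r(\bar w + s) - r(\bar w) - Dr(\bar w + s)\, s}
	\le
	\sup_{M \in \partial^C r(\bar w + s)} \abs{r(\bar w + s) - r(\bar w) - M s}
	=
	\smallO(\abs{s})
	\qquad\text{as } s \to 0,
\end{equation*}
which is exactly Newton differentiability of $r$ at $\bar w$ with the single-valued selection $Dr$ (cf.\ the remark following the definition of Newton differentiability). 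For the second hypothesis, since $g$ is assumed convex, \cref{lem:inverses S^w in Newton method bounded} supplies a constant $C > 0$ with $\norm{Dr(w)^{-1}} \le C$ for \emph{all} $w \in \R^n$; in particular this bound holds on every neighborhood of $\bar w$.

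With both hypotheses of \cref{thm:Semismooth Newton converges q-superlinearly} in place, the theorem yields a $\delta > 0$ such that for every $w_0$ with $\abs{w_0 - \bar w} \le \delta$ the iteration $w_{k+1} = w_k - Dr(w_k)^{-1} r(w_k)$ is well defined and converges $q$-superlinearly to $\bar w$, which is the claim.

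The main (and only) point requiring care is the identification in the second paragraph: the abstract theorem demands a \emph{fixed} selection $Dr$ realizing the Newton-differentiability limit, whereas \cref{lem:PC1-functions are semismooth} is phrased via a supremum over $\partial^C r$ at the perturbed point. Once one observes that $Dr(\bar w + s) \in \partial^C r(\bar w + s)$ by \cref{thm:J and r and tilde u are PC1 and semismooth}, so that the chosen selection is dominated by that supremum, the matching is immediate and the rest is a direct citation.
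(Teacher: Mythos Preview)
Your proof is correct and matches the paper's approach exactly: the paper does not give a separate proof but simply states that the result follows from \cref{thm:Semismooth Newton converges q-superlinearly} together with \cref{thm:J and r and tilde u are PC1 and semismooth} and \cref{lem:inverses S^w in Newton method bounded}. Your explicit verification that the selection $Dr(w)$ lies in $\partial^C r(w)$ (so that the semismoothness estimate dominates it) makes the argument more careful than what the paper spells out.
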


As usual,
the globalization of the semismooth Newton method
is a delicate issue.
In the numerical examples,
we use a line search.
This is motivated by the next lemma
which shows the Newton direction
is a non-ascent direction under certain circumstances.

\begin{lemma}
	\label{lem:Newton direction of r is descent on J}
	We assume that $g$ is convex
	and that
	$\xi \in \R^n$ satisfies \eqref{DC}.
	Then, the Newton direction $\delta \xi := - Dr(\xi)^{-1} r(\xi)$
	satisfies $J'(\xi; \delta \xi) \le 0$.
\end{lemma}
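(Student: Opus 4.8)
Since $g$ is convex, \cref{lem:inverses S^w in Newton method bounded} guarantees that $Dr(w)$ is invertible, so the Newton direction $\delta w = -Dr(w)^{-1}r(w)$ is well defined. Because $w$ satisfies \eqref{DC}, \cref{thm:J tilde u j and W differentiable for DC} tells us that $J$ is (continuously) differentiable at $w$ with $J'(w) = \alpha\, r(w)^\top \Theta(w)$, so that $J'(w;\delta w) = \alpha\, r(w)^\top \Theta(w)\,\delta w$. Hence the whole task reduces to showing $r(w)^\top \Theta(w)\,\delta w \le 0$.

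The key idea is to eliminate $r(w)$ using the Newton equation. Writing $H := \nabla^2 g(\tilde u(w))$ and $\Theta := \Theta(w)$, the definition \eqref{eq:Dr} gives $Dr(w) = \tfrac1\alpha H\Theta - I$, and the identity $Dr(w)\,\delta w = -r(w)$ rearranges to
\begin{equation*}
	r(w) = \bigl(I - \tfrac1\alpha H\Theta\bigr)\delta w .
\end{equation*}
Substituting this into $r(w)^\top \Theta\,\delta w$ and using that both $\Theta$ and $H$ are symmetric (the former by \itemref{lem:properties of Theta:i}, the latter trivially), I would obtain
\begin{equation*}
	r(w)^\top \Theta\,\delta w
	= \delta w^\top \Theta\,\delta w - \tfrac1\alpha (\Theta\,\delta w)^\top H (\Theta\,\delta w).
\end{equation*}
Now the first term is $\le 0$ because $\Theta$ is negative semidefinite (\itemref{lem:properties of Theta:i}), and the second term is $\ge 0$ because $H = \nabla^2 g(\tilde u(w))$ is positive semidefinite by convexity of $g$. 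Subtracting a nonnegative quantity from a nonpositive one keeps the result $\le 0$, so $r(w)^\top \Theta\,\delta w \le 0$ and therefore $J'(w;\delta w) = \alpha\, r(w)^\top \Theta\,\delta w \le 0$.

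There is no real obstacle here; the only points requiring care are (i) invoking \eqref{DC} to ensure $J'(w)$ exists and equals $\alpha\, r(w)^\top\Theta(w)$, and (ii) the symmetry of $\Theta(w)$, which is exactly what makes the cross term collapse into the manifestly signed expression $(\Theta\,\delta w)^\top H (\Theta\,\delta w)$ rather than an indefinite bilinear form. Everything else is a one-line algebraic manipulation using the already established sign properties of $\Theta(w)$ and $\nabla^2 g$.
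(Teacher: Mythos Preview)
Your argument is correct and in fact more elementary than the paper's. Both proofs start from $J'(w;\delta w) = \alpha\, r(w)^\top \Theta(w)\,\delta w = -\alpha\, r(w)^\top \Theta(w)\,Dr(w)^{-1} r(w)$, but then diverge. The paper shows directly that the matrix $\Theta(w)\,Dr(w)^{-1}$ is symmetric and positive semidefinite: it takes the square root $V$ of $-\Theta(w)$, applies the Sherman--Morrison--Woodbury identity to $(I+UV)^{-1}$ with $U = \tfrac1\alpha \nabla^2 g(\tilde u(w)) V$, and arrives at $\Theta(w)\,Dr(w)^{-1} = V(I+VU)^{-1}V$, which is manifestly symmetric positive semidefinite. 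You instead substitute $r(w) = (I - \tfrac1\alpha H\Theta)\,\delta w$ into the scalar quantity $r(w)^\top \Theta\,\delta w$ and split it as $\delta w^\top \Theta\,\delta w - \tfrac1\alpha (\Theta\,\delta w)^\top H(\Theta\,\delta w)$, each term having the right sign. Your route avoids both the matrix square root and Sherman--Morrison--Woodbury; the paper's route buys the slightly stronger structural fact that $\Theta(w)\,Dr(w)^{-1}$ itself is positive semidefinite, not merely that this particular quadratic form is nonnegative.
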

\begin{proof}
	\Cref{thm:J_r_tilde u_W_differentiable_for_DC} yields
	$J'(\xi;\delta \xi) = -\alpha r(\xi)^\top \Theta(\xi) Dr(\xi)^{-1} r(\xi)$.
	We check that the matrix $\Theta(\xi) Dr(\xi)^{-1}$
	is symmetric and positive semidefinite.
	The Sherman--Morrison--Woodbury formula
	yields
	$(I+UV)^{-1} = I - U (I + VU)^{-1} V$ if $I+UV$ is invertible,
	where $I$ is the identity matrix.
	Let $V$ be the square root of
	the symmetric and positive semidefinite matrix $-\Theta(\xi)$,
	see \itemref{lem:properties of Theta:i},
	and
	$U := \frac 1 {\alpha} \nabla^2 g(\tilde u(\xi)) V$.
	Therefore,
	\begin{align*}
		\Theta(\xi) Dr(\xi)^{-1}
		&=
		V^2 (I+UV)^{-1}
		=
		V^2 \left[ I - U (I + VU)^{-1} V \right]
		\\
		&=
		V \left[ I - VU (I + VU)^{-1} \right] V
		=
		V (I + VU)^{-1} V
		.
	\end{align*}
	The matrix $I+VU$ is symmetric positive definite
	and
	$V$ is symmetric.
	Consequently, $\Theta(\xi) Dr(\xi)^{-1}$
	is symmetric and positive semidefinite.
\end{proof}

In the case that \eqref{DC}
is not satisfied at $\xi$,
we only get the formula
$J'(\xi;\delta \xi) = -\alpha r(\xi)^\top \hat\Theta Dr(\xi)^{-1} r(\xi)$
for some $\hat\Theta$ from the generalized Jacobian of $\tilde u$,
see \cref{lem:PC1-functions:form of derivative}.
In general, this expression can be positive.
Note that
the points violating \eqref{DC} have measure zero,
see \cref{thm:degenerated sectors are zero-set}.
In our numerical computations in \cref{sec:numerics},
we never observed a situation in which the
Newton direction is an ascent direction for $J$.

\section{Numerical examples}
\label{sec:numerics}

We present some numerical results.
We address the optimal control problem \eqref{eq:OCP}.
In order to simplify the implementation,
we further restrict ourselves to
$u_d$ being the Lebesgue measure, cf.\ \cref{subsec:Lebesgue_case},
and
$\Omega \subset \R^2$ being an open and bounded polygon.

The state equation is discretized using finite elements.
That is,
we consider a triangulation of $\cl(\Omega)$
and the state $y$ and the test function $\varphi$ are approximated by
the space of
continuous and piecewise linear functions.
Further, we assume
that the control set $F$ is a subset of the nodes of the triangulation of $\cl(\Omega)$.
As described in \cref{subsec:notation_assumptions},
a control $u$ is identified with a vector in $\R^n$.

Now, the discretized state equation is given by
\(
	K y = B u,
\)
where $K$ is the stiffness matrix
reduced to the interior nodes of the triangulation
and $B$ is a matrix (containing only zeros and ones)
which relates the vertices $a_i \in F$
with the inner nodes of the triangulation.
Similarly, the tracking term is discretized by
\(
	\frac12 (y - y_d)^\top M (y - y_d)
\)
with the mass matrix $M$ (restricted to interior nodes)
and a discretization of the desired state $y_d$.

Unless stated otherwise,
we use the following problem data:
\begin{align*}
	\alpha &= 10^{-3}
	,
	\quad
	O = \Omega = (-1, 1)^2
	,
	\quad
	D = [-1,1]^2
	,
	\quad
	\beta = 0,
	\\
	y_d(x_1, x_2) &= 
	\frac 12 \cos(\pi x_1/2) \cos(\pi x_2/2) \exp(x_2)
	.
\end{align*}
The set $F$ itself will be the set of all nodes of the triangulation.

We stop our iterations as soon as
$\abs{r(\xi_k)}_\infty < 10^{-6}$ holds for some iterate $\xi_k \in \R^n$.
This is motivated by \cref{lem:optimality of xi on J}.

The Matlab source code for the computations
can be found in the GitHub repository
\url{https://github.com/gerw/transport_control},
see also \cite{BorchardWachsmuth2025:2}.

\subsection{Results for standard problem data}

We solve the discretized problem using
the algorithms presented in \cref{subsec:fixed_point} and \cref{subsec:Newton on r}.
We were not able to prove convergence of the fixed-point iteration in \cref{subsec:fixed_point},
but in the numerical experiments,
we did not observe any problems.
In our implementation of the fixed-point iteration
we used a strong Wolfe line search
(see \cite[Algorithmus~6.2]{GeigerKanzow1999})
to obtain the step size $\tau_k$.
Similarly, we used a strong Wolfe line search
to globalize the semismooth Newton method.
Although this is not covered by our theory,
both implementations work very reliably.

The solution of the discretized problem
(with the parameters given above)
on a grid with $8321$ nodes
is shown in \cref{fig:solution}.
\begin{figure}[ht]
	\centering
	\includegraphics[scale=0.09]{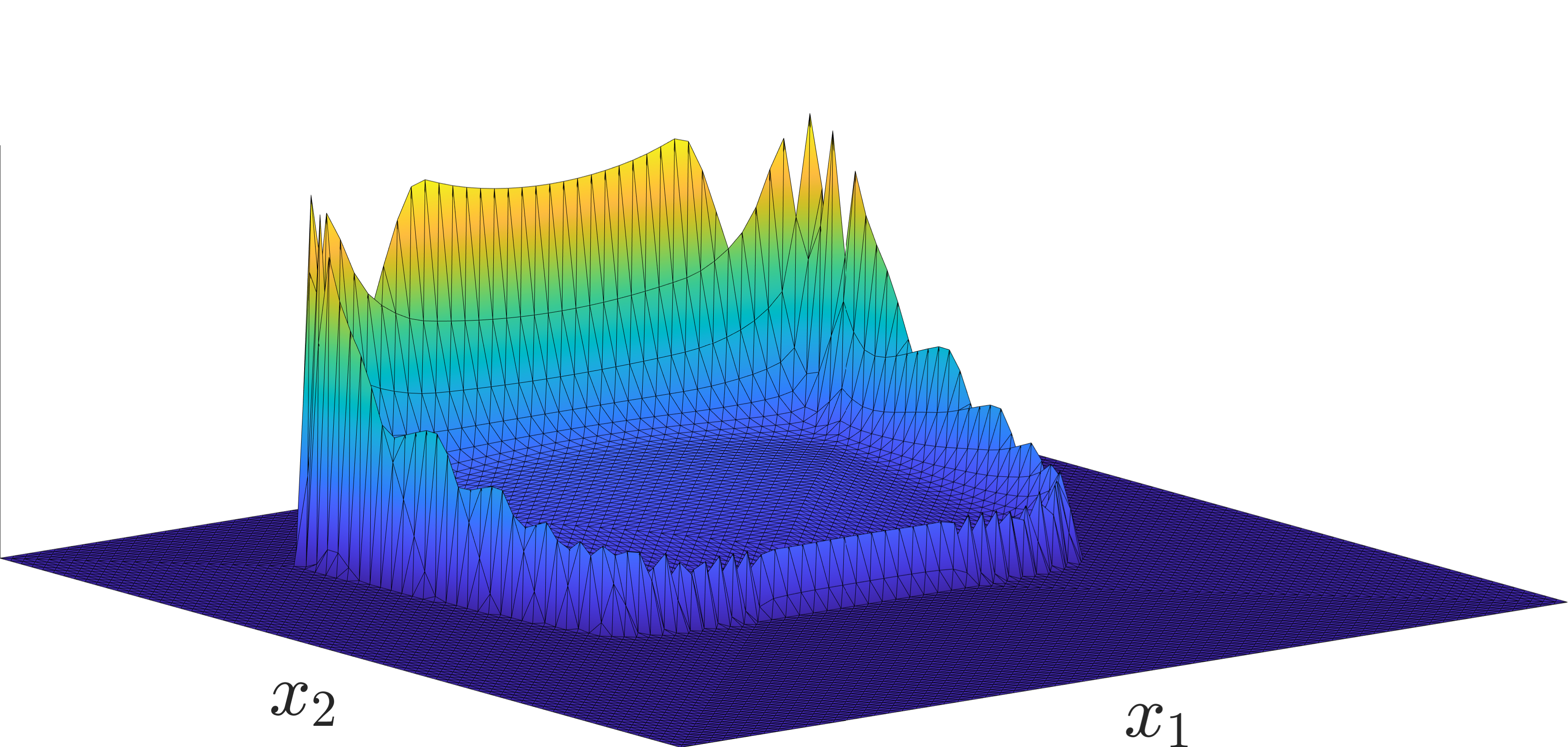}
	\qquad
	\includegraphics[scale=0.09]{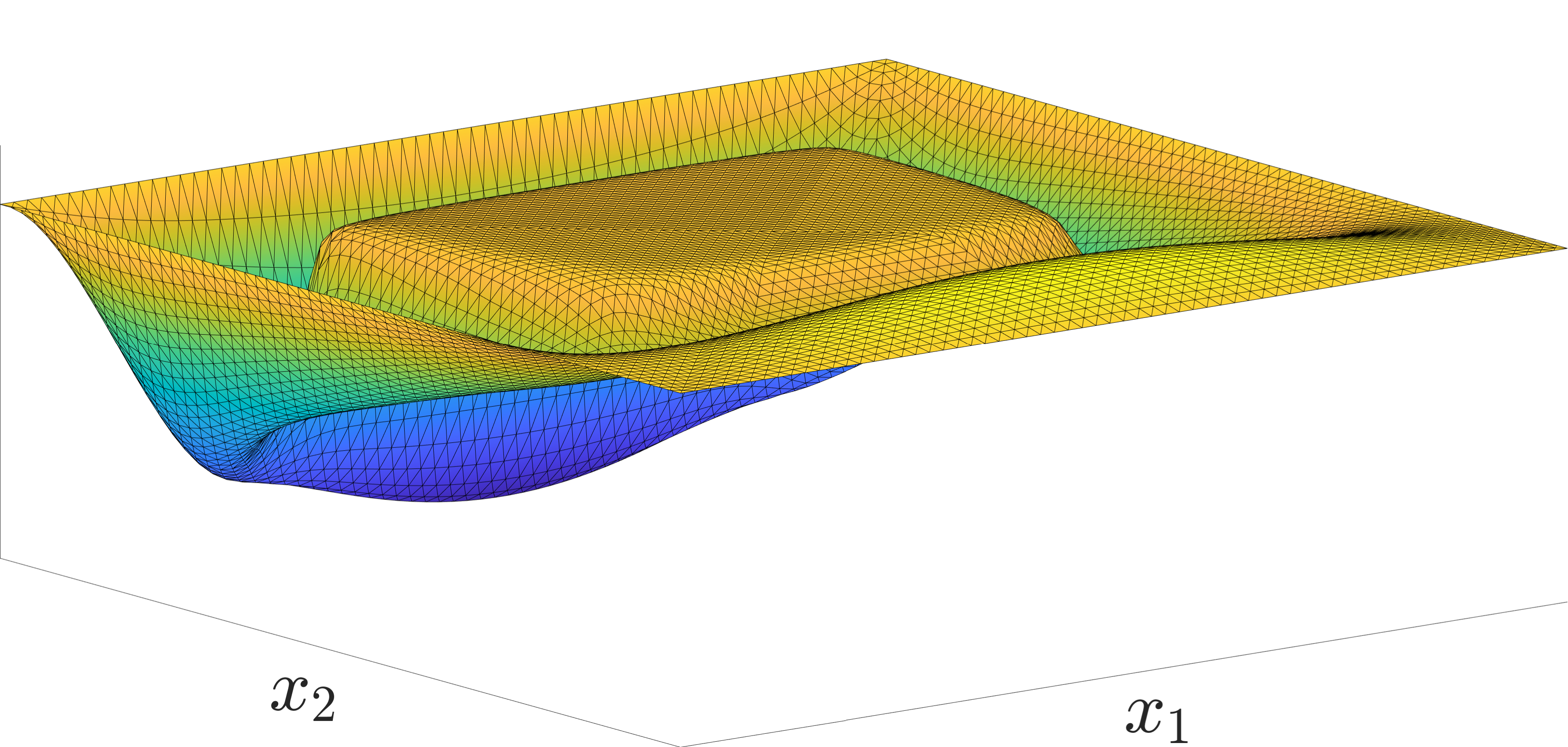}
	\caption{Optimal solution $\bar u$ (left) and residual $\bar y - y_d$ (right).}
	\label{fig:solution}
\end{figure}
We briefly explain the visualization
of the measure $\bar u$ in the left part of \cref{fig:solution}.
Recall that the discretized $\bar u$
is a linear combination of Dirac measures
in the nodes of the triangulation.
Since many coefficients are nonzero,
it is not enlightening to plot all the
individual Dirac measures.
Instead, we divide each component
$\bar u_i$
by $\int_\Omega \varphi_i \d x$,
where $\varphi_i$ is the nodal basis function
associated with the node $a_i$,
and we plot the resulting
piecewise linear function
$
\sum_{i = 1}^n
\bar u_i \parens{\int_\Omega \varphi_i \d x}^{-1}
\varphi_i
.
$

In \cref{fig:transport},
we
illustrate the transport from $u_d$ to the optimal $\bar u = \tilde u(\bar \xi)$
on a rather coarse mesh.
\begin{figure}[ht]
	\centering
	\includegraphics[align=c,scale=0.11]{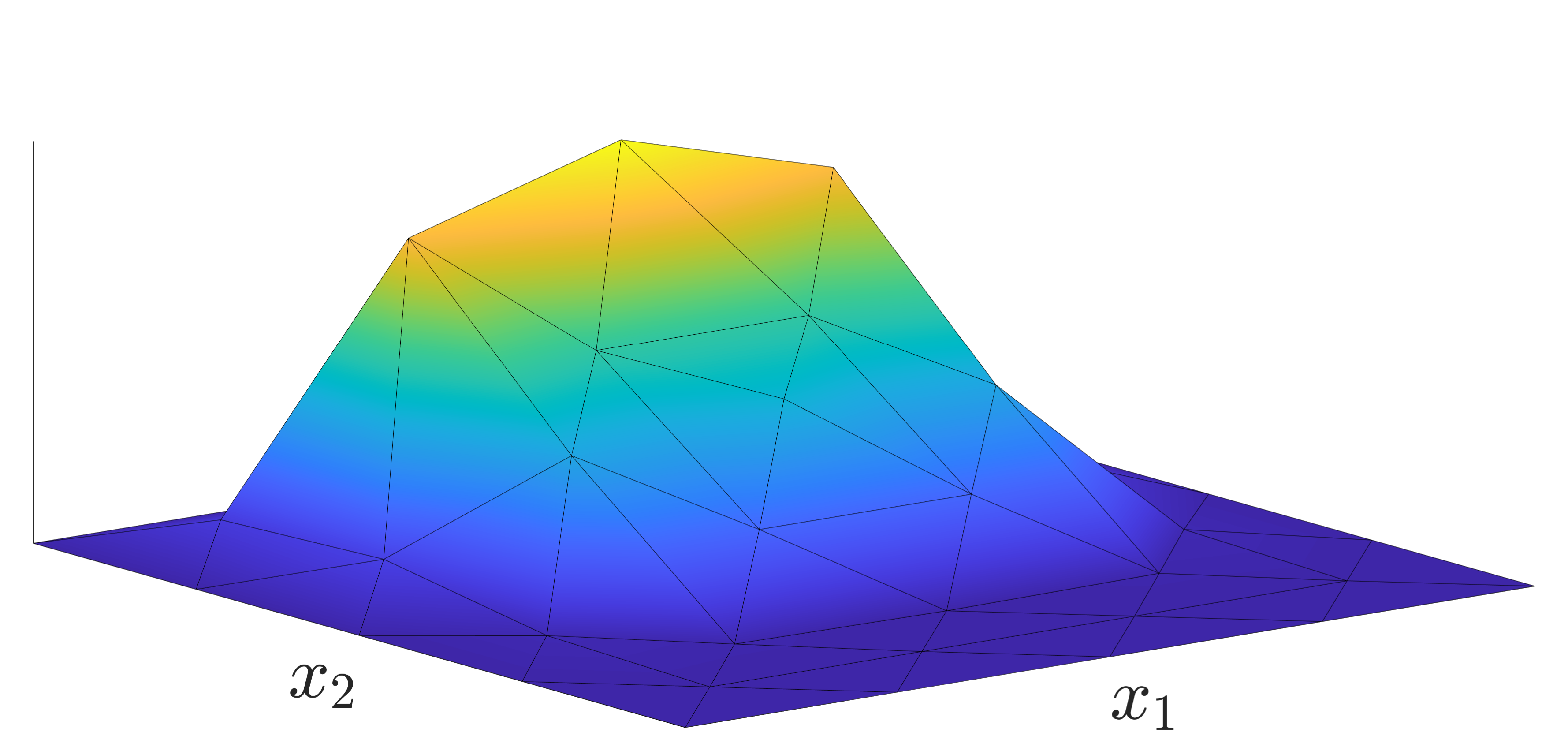}
	\qquad
	\includegraphics[align=c,scale=0.16]{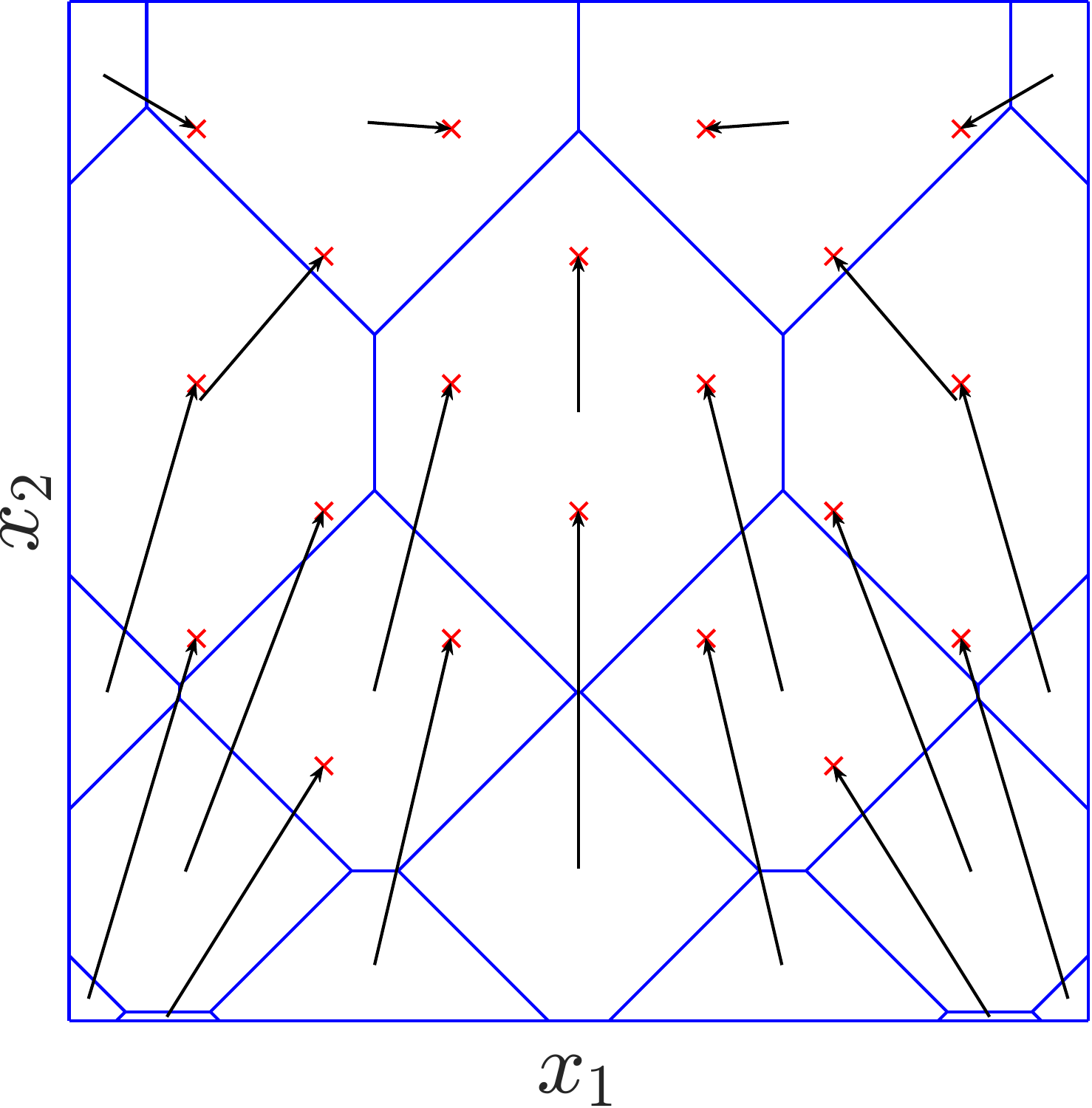}
	\caption{Optimal solution $\bar u$ and corresponding transport plan on a coarse mesh.}
	\label{fig:transport}
\end{figure}
In the right part of this figure,
one can see the nonempty dominating regions $D_i(\bar \xi)$
(blue polygons)
and the corresponding arrow points to the associated $a_i$
(red crosses).
Recall that the mass of $u_d$ located in $D_i(\bar \xi)$
is transported to $a_i$.

Next,
we investigate the performance of our two algorithms
on different refinements of the domain.
The results are reported in \cref{tab:algorithms}.
\begin{table}[ht]
	\centering
	\begin{tabular}{rr|rrr|rrr}
		\toprule
		\multirow{2}{*}{Ref} &
		\multirow{2}{*}{\# nodes} &
		\multicolumn{3}{c|}{fixed-point method} &
		\multicolumn{3}{c}{semismooth Newton method} \\
		&& 
		iter & time [s] & avg time [s] & iter & time [s]& avg time [s] \\
		\midrule
		4 &        545 &        532 &       2.50 & 4.704e$-$03 &         11 &       0.07 & 6.244e$-$03 \\
		5 &       2113 &        536 &       8.29 & 1.547e$-$02 &         13 &       0.33 & 2.519e$-$02 \\
		6 &       8321 &        542 &      33.47 & 6.175e$-$02 &         13 &       1.65 & 1.270e$-$01 \\
		7 &      33025 &        517 &     138.21 & 2.673e$-$01 &         12 &       9.20 & 7.670e$-$01 \\
		8 &     131585 &        574 &     678.78 & 1.183e$+$00 &         12 &      58.74 & 4.895e$+$00 \\
		\bottomrule
	\end{tabular}
	\caption{Number of iterations, total calculation time and average time (per iteration) for different refinements of a coarse mesh.}
	\label{tab:algorithms}
\end{table}
As expected, the fixed-point algorithm uses a big number of steps, however, these can be calculated rather fast.
On the other hand, the Newton steps take more time but only a small number of steps is required to obtain the same accuracy.

For both methods, the number of iterations is roughly constant
and does not depend on the discretization level.
This is also supported by \cref{fig:rinfnorm},
which shows how the infinity norm of the residual $r(\xi_k)$ decreases over the iterations $k$
of the semismooth Newton method
for different discretization levels.
\begin{figure}[ht]
	\centering
	\includegraphics[scale=0.2]{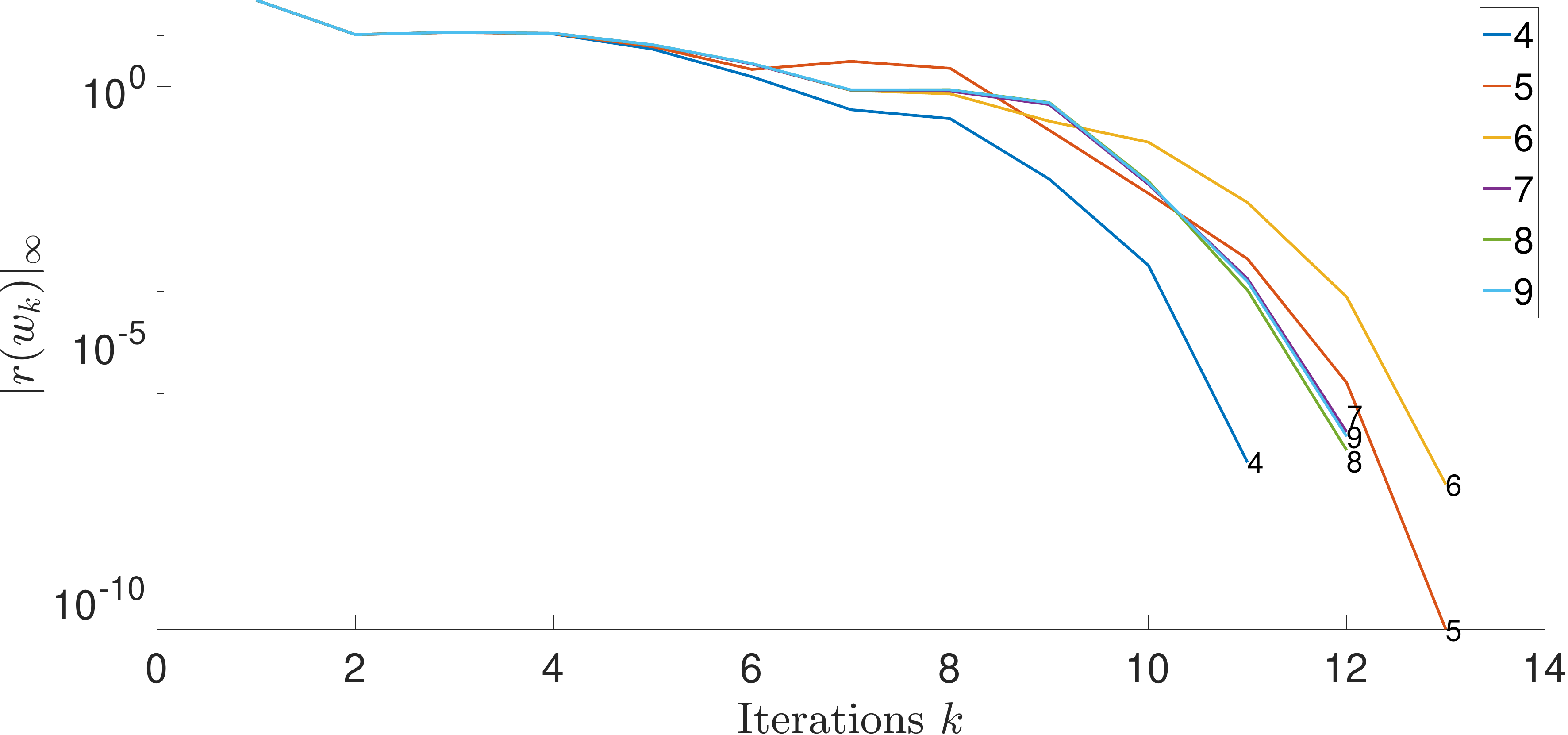}
	\caption{%
	Plot of $\abs{r(\xi_k)}_\infty$ for the semismooth Newton method over the iterations for different numbers of refinements.}
	\label{fig:rinfnorm}
\end{figure}
This observed mesh independence
is rather surprising,
since our analysis depends
crucially on the finite-dimensional setting
and, in particular,
the theory of $PC^1$ functions
which is not available
in infinite dimensions.

We shortly want to discuss the gradient algorithm for constant but small step sizes as it has been discussed in \cref{thm:gradient_method_constant_stepsize_R_linear}.
Of course,
this is highly dependent on the concrete value of the step size $\tau > 0$.
For example, for the choice $\tau = 10^{-2}$ we observe convergence
while
$\tau = 2\cdot10^{-2}$ results in divergence.
This behaviour seems to be independent of the level of discretization.
Even with small values of $\tau > 0$,
the function value does not necessarily decrease in every iteration,
but this does not impede convergence of the sequence.
We emphasize that we even observe global convergence
although our theoretical result only provides local convergence.
This is subject to future research.

\subsection{Convection-diffusion problem}
We present numerical examples
for the motivation problem from the introduction.
To this end,
we change the problem data to
$\Omega = (0,1)^2$,
$\beta = (16,32)$,
$y_d \equiv 0$.
The observation domain $O$ is the square with the vertices
$(0.55, 0.7)$, $(0.55, 0.8)$, $(0.65, 0.8)$, and $(0.65, 0.7)$.
The domain $D$ of the prior is the square with the vertices
$(0.4,0.5)$, $(0.3,0.4)$, $(0.4,0.3)$, and $(0.5,0.4)$.
In \cref{fig:convection_diffusion}
we depict the optimal state $y$
and (the density of) the optimal control $u$
for different values of $\alpha$.
\begin{figure}[ht]
	\centering
	\includegraphics[scale=0.28]{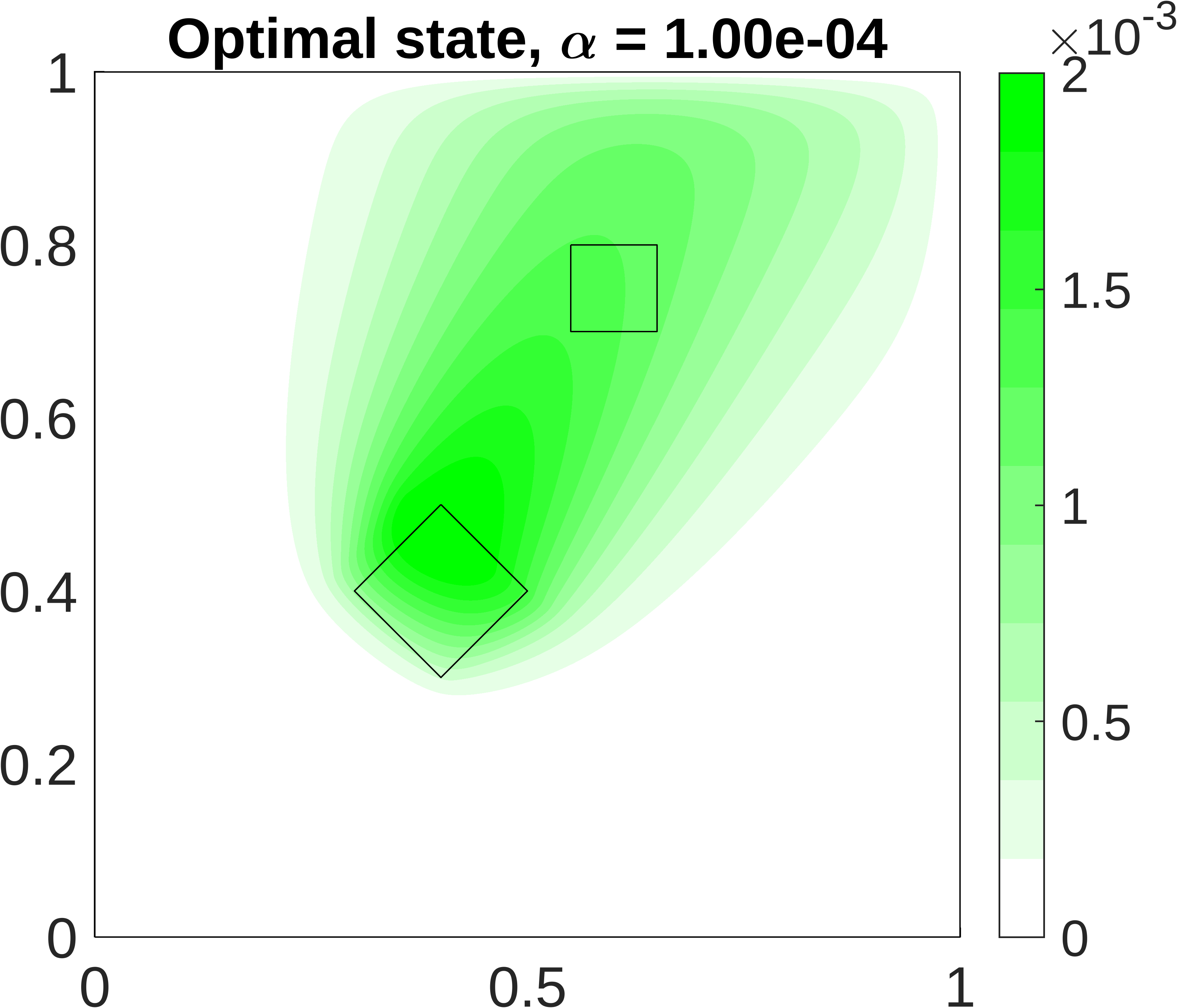}%
	\hfill%
	\includegraphics[scale=0.28]{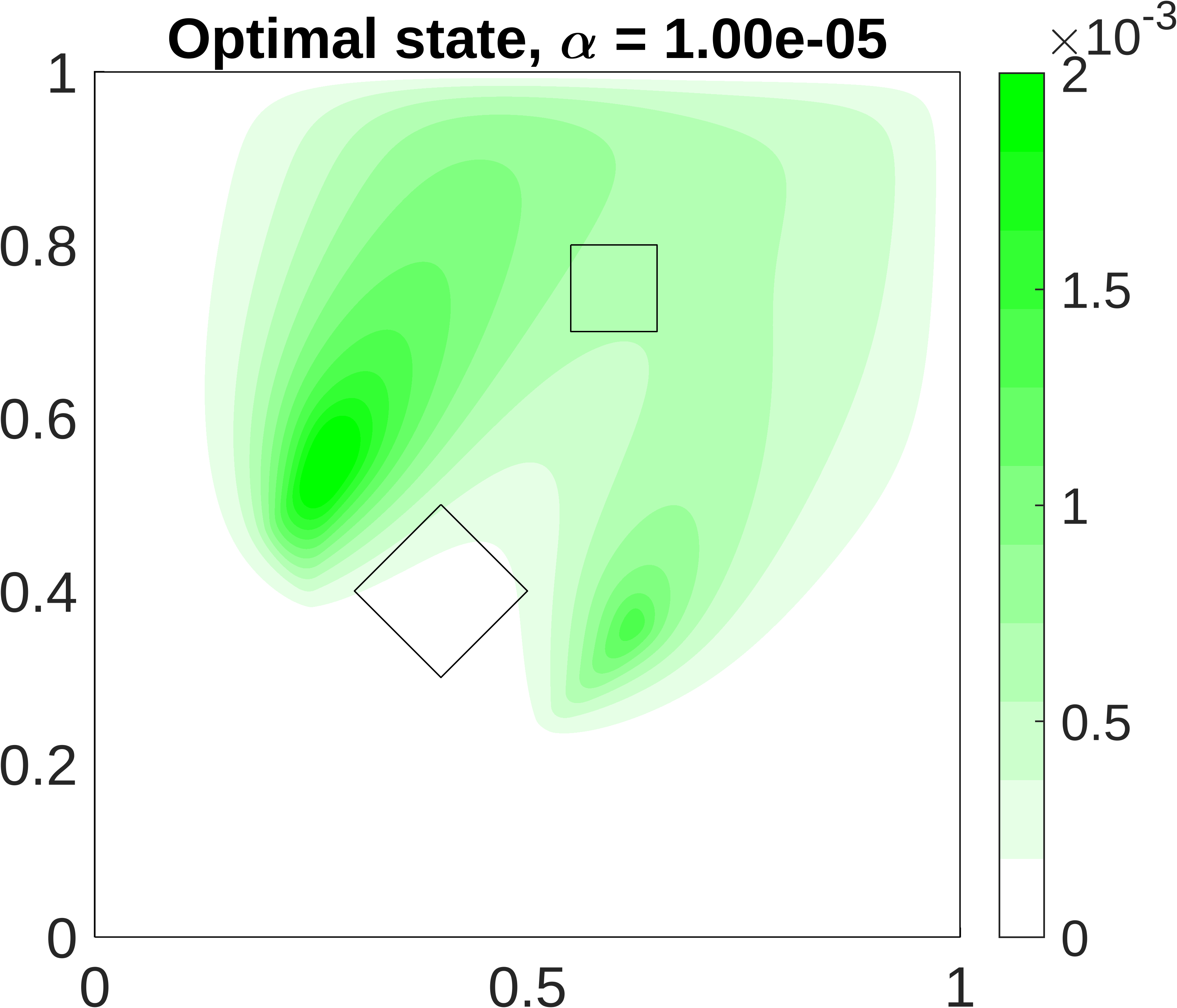}%
	\hfill%
	\includegraphics[scale=0.28]{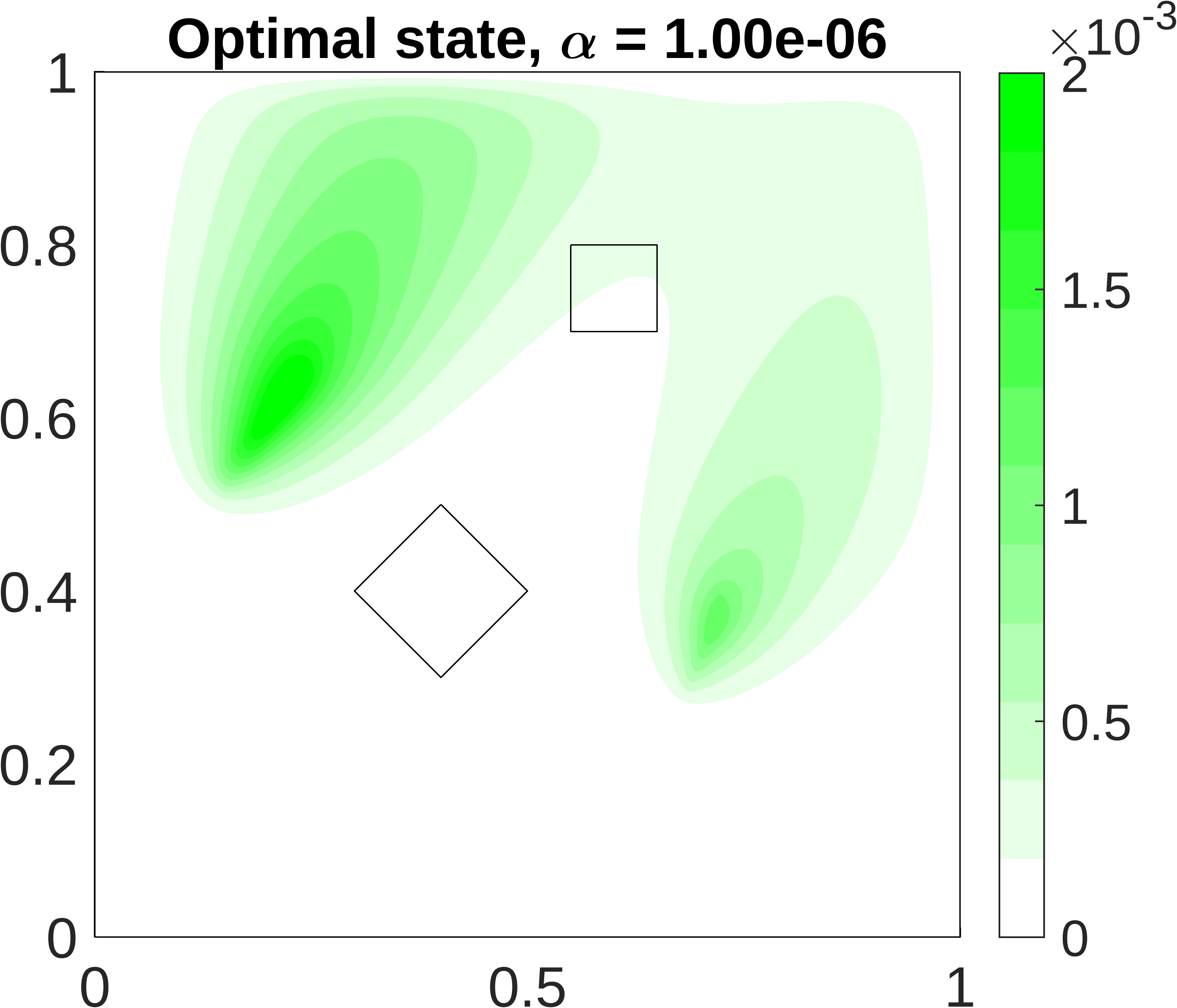}\\[.5cm]
	\includegraphics[scale=0.28]{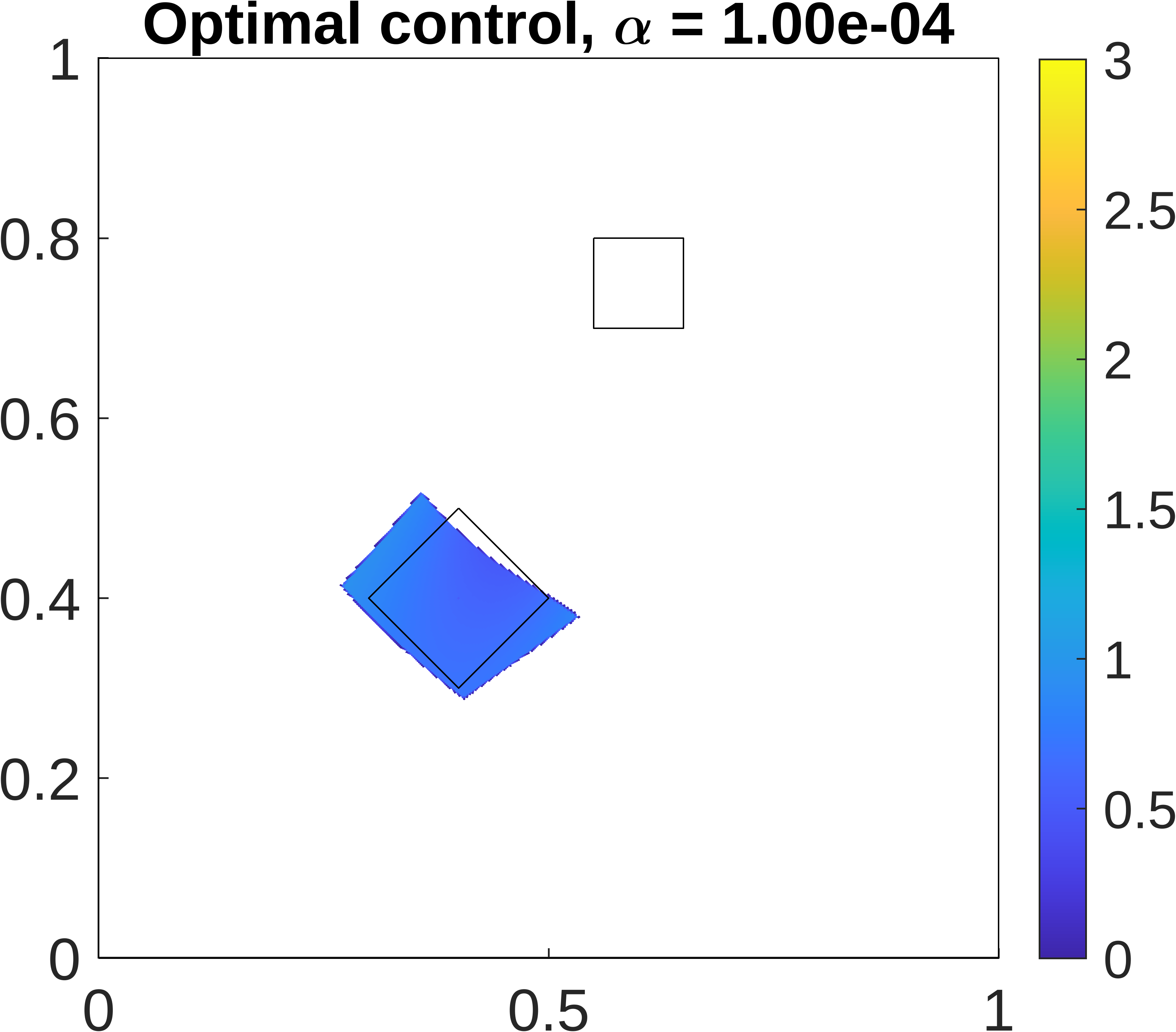}%
	\hfill%
	\includegraphics[scale=0.28]{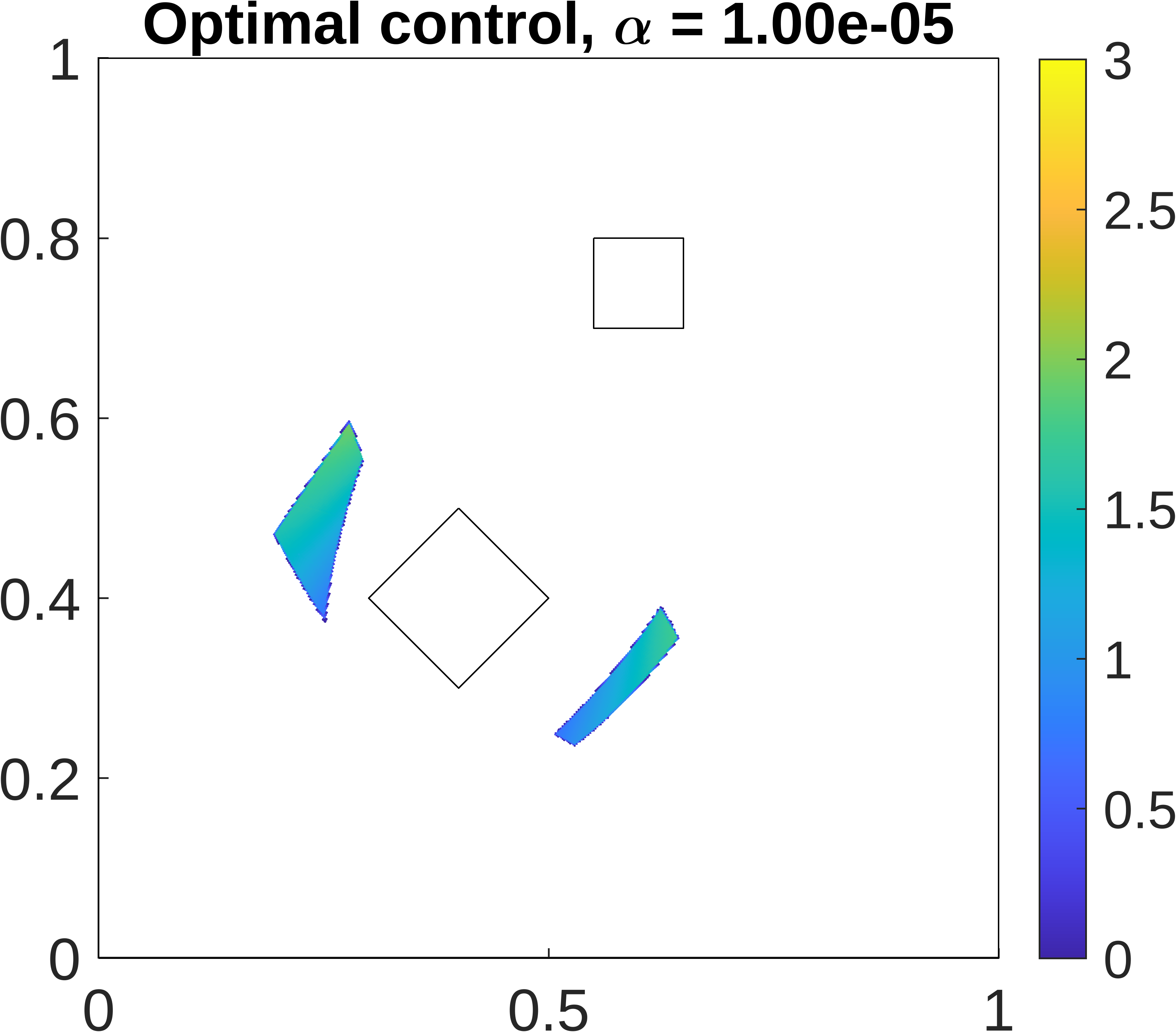}%
	\hfill%
	\includegraphics[scale=0.28]{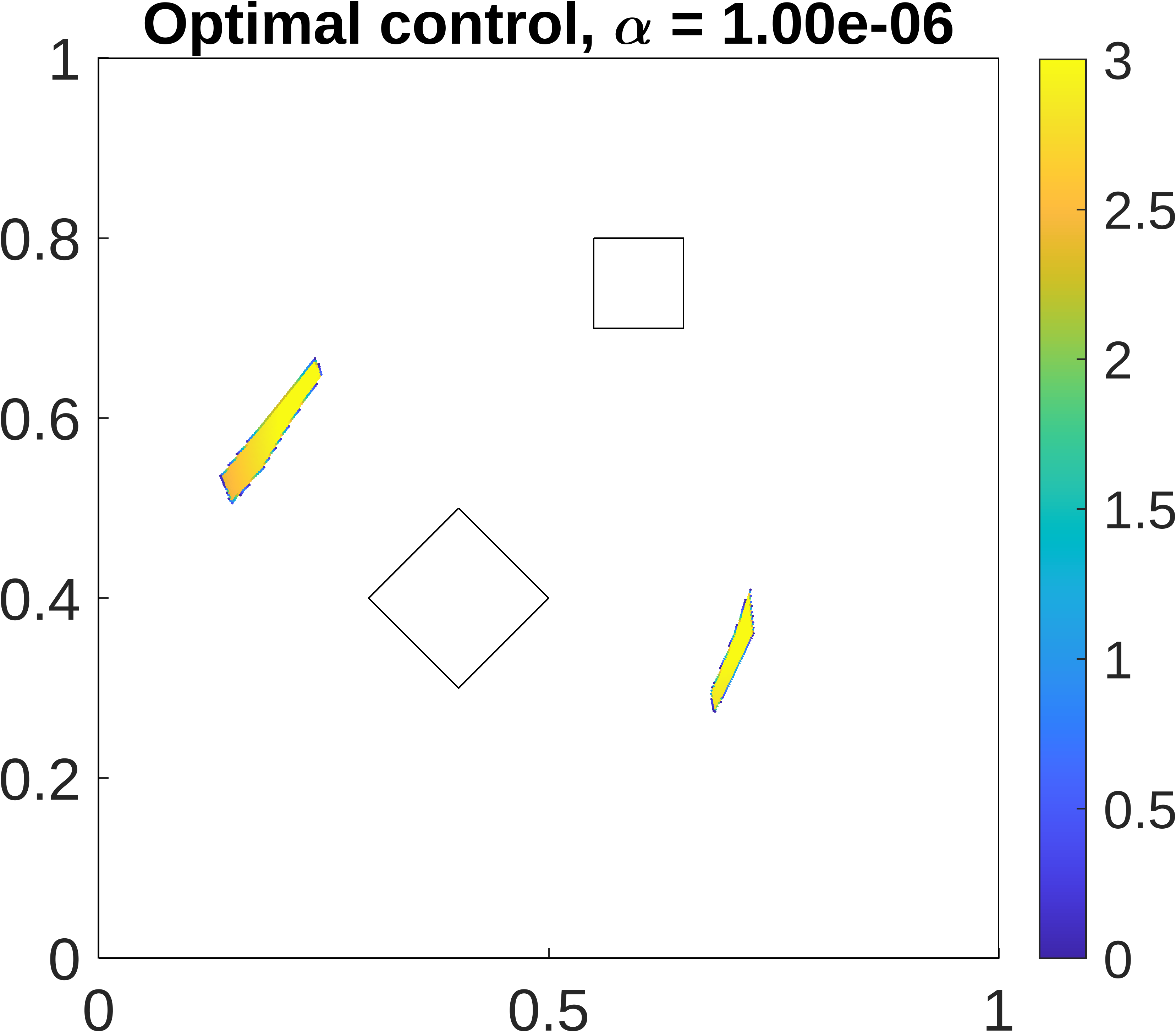}\\
	\caption{Optimal state and control for the convection-diffusion problem for various values of $\alpha$.}
	\label{fig:convection_diffusion}
\end{figure}
We see that for large values of $\alpha$,
the optimal control coincides almost with the prior $u_d$,
whereas for small values of $\alpha$,
the control $u$ is moved away from $D$
and
the state variable is very small on the observation domain $O$.

\subsection{Control on a subdomain}

We also implemented the case that the control is only non-zero on the subset $[0,1]^2$,
i.e.,
for the control set $F$ we choose all nodes of the triangulation that lie inside the top-right subsquare $[0,1]^2$.
In \cite{MeyerWachsmuth2025},
we prove that
under certain assumptions,
the optimal solution $\bar u$
can only contain Diracs on the boundary of the control set $F$.
Our
numerical examples,
see, e.g., \cref{fig:subset_control},
support this finding.
It seems that a Dirac measure is located in the vertex $(0,0)$
while a line measures is supported on the boundary segments of
$[0,1]^2$.
\begin{figure}[ht]
	\centering
	\includegraphics[scale=0.18]{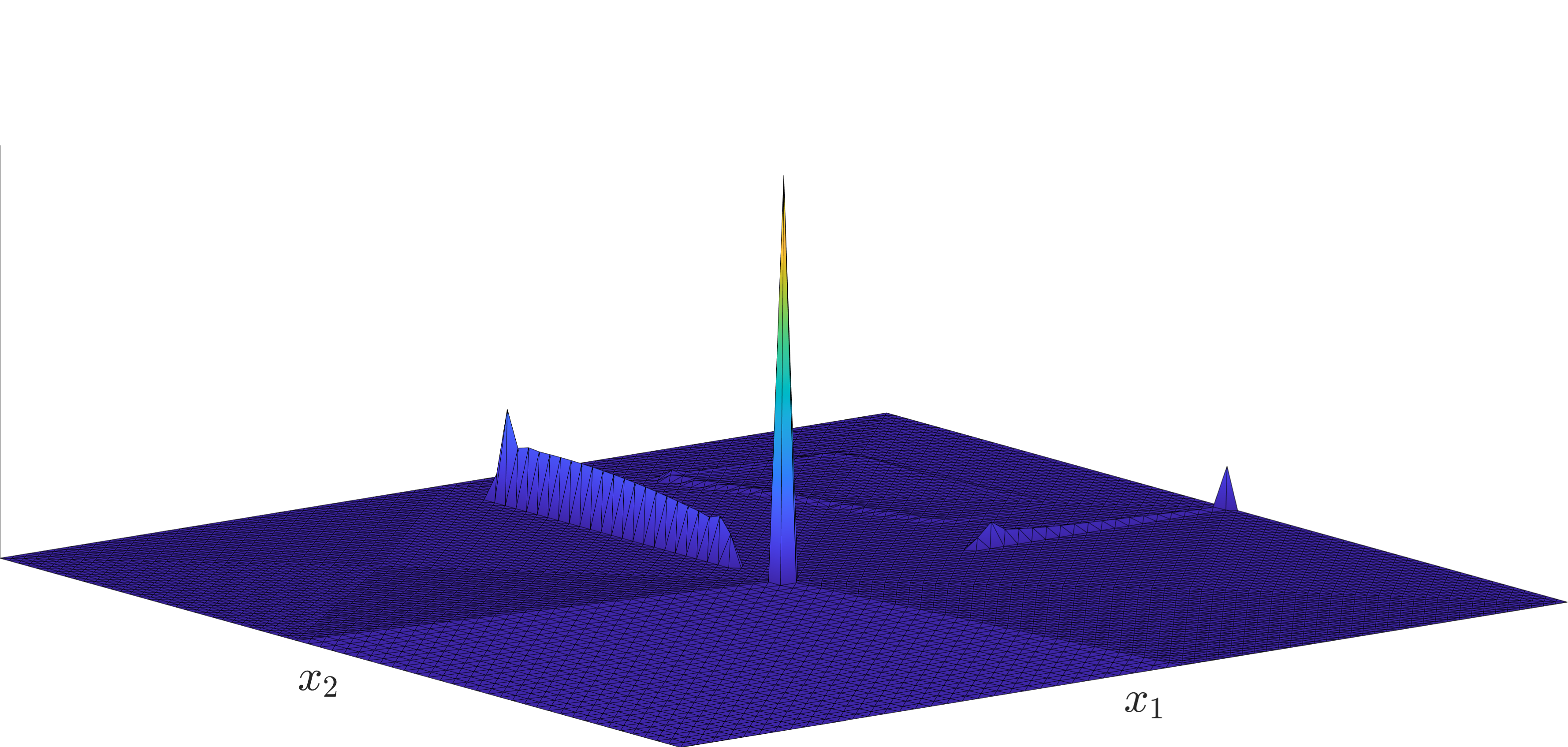}
	\caption{Optimal solution $\bar u$ for control only on the top-right subsquare.}
	\label{fig:subset_control}
\end{figure}

\subsection{Nonconvex objective}

We now consider a nonconvex $g$.
In particular,
we use
\begin{equation*}
	g(u)
	:=
	\left( \frac12 \norm{y_u - y_d}_{L^2(\Omega)}^2 \right)
	\left( \frac12 \norm{y_u - y_{d,2}}_{L^2(\Omega)}^2 \right)
	,
\end{equation*}
i.e., the product of two different tracking terms.
The function $y_d$ is chosen as above
and for the second desired state
we use
\[
	y_{d,2}(x_1, x_2)
	:=
	\frac 1 4
	\cos(\pi x_1 / 2 + \pi)
	\sin(\pi x_2)
	.
\]

Numerical examples show that our fixed-point algorithm (with line search) works in the nonconvex case
as well.
The Newton algorithm in \cref{thm:convergence of Newton method on r} also works for some examples
although we assumed convexity of $g$ in \cref{thm:convergence of Newton method on r},
again with a line search as globalization.
Note that the Newton differentiability
of the residual $r$
also holds in the nonconvex case.
However, the invertibility of the derivative
is not clear, cf.\ \cref{lem:inverses S^xi in Newton method bounded}.
The fixed-point algorithm for constant step sizes, cf.\ \cref{thm:gradient_method_constant_stepsize_R_linear}, seems to convergence for suitable step sizes as well.

In the convex case,
\cref{lem:optimality of xi on J}
provides the existence of
a unique zero of $r$, which corresponds to a minimizer.
For the above nonconvex $g$,
we numerically observed two different zeros,
see \cref{fig:nonconvex}.
\begin{figure}[ht]
	\centering
	\includegraphics[scale=0.09]{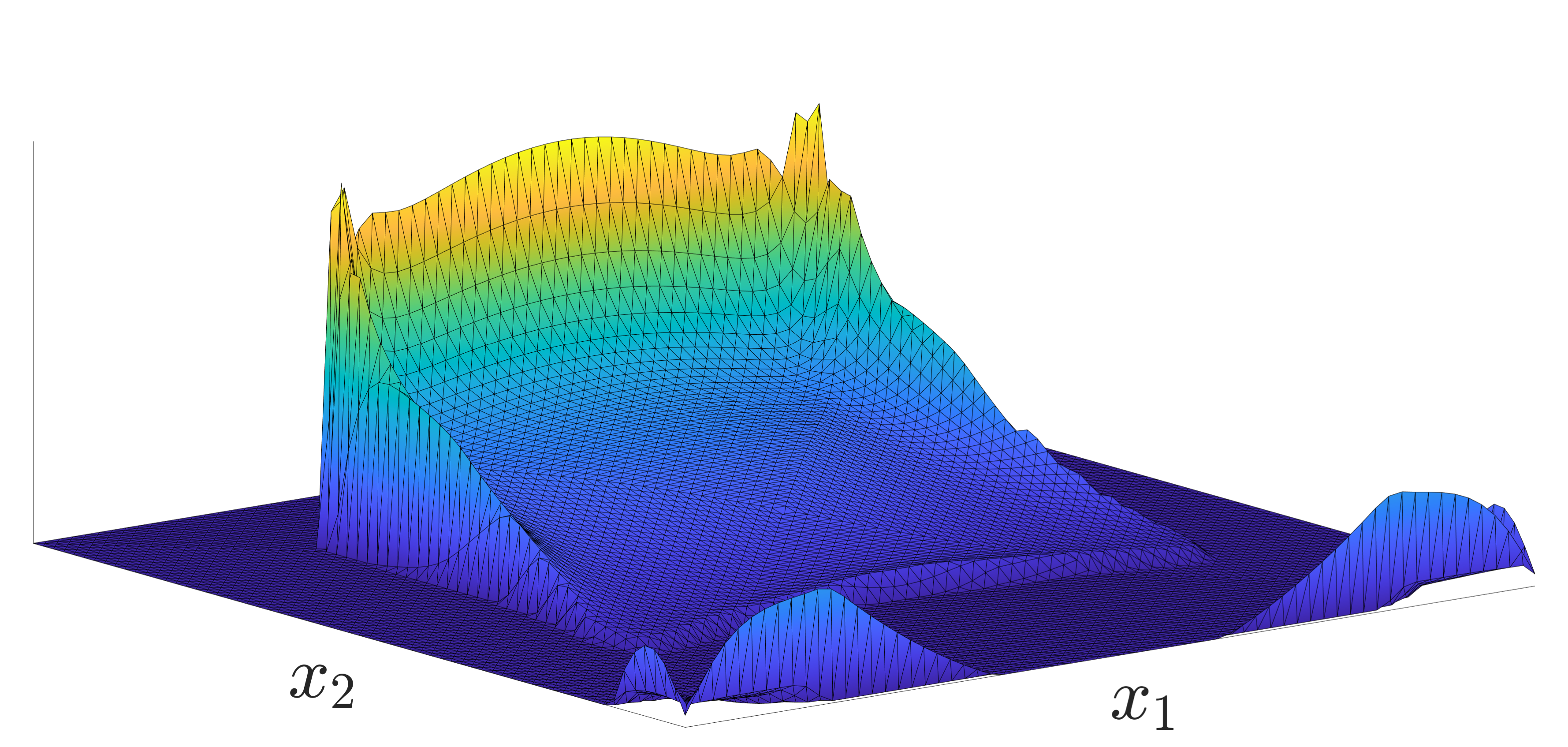}
	\qquad
	\includegraphics[scale=0.09]{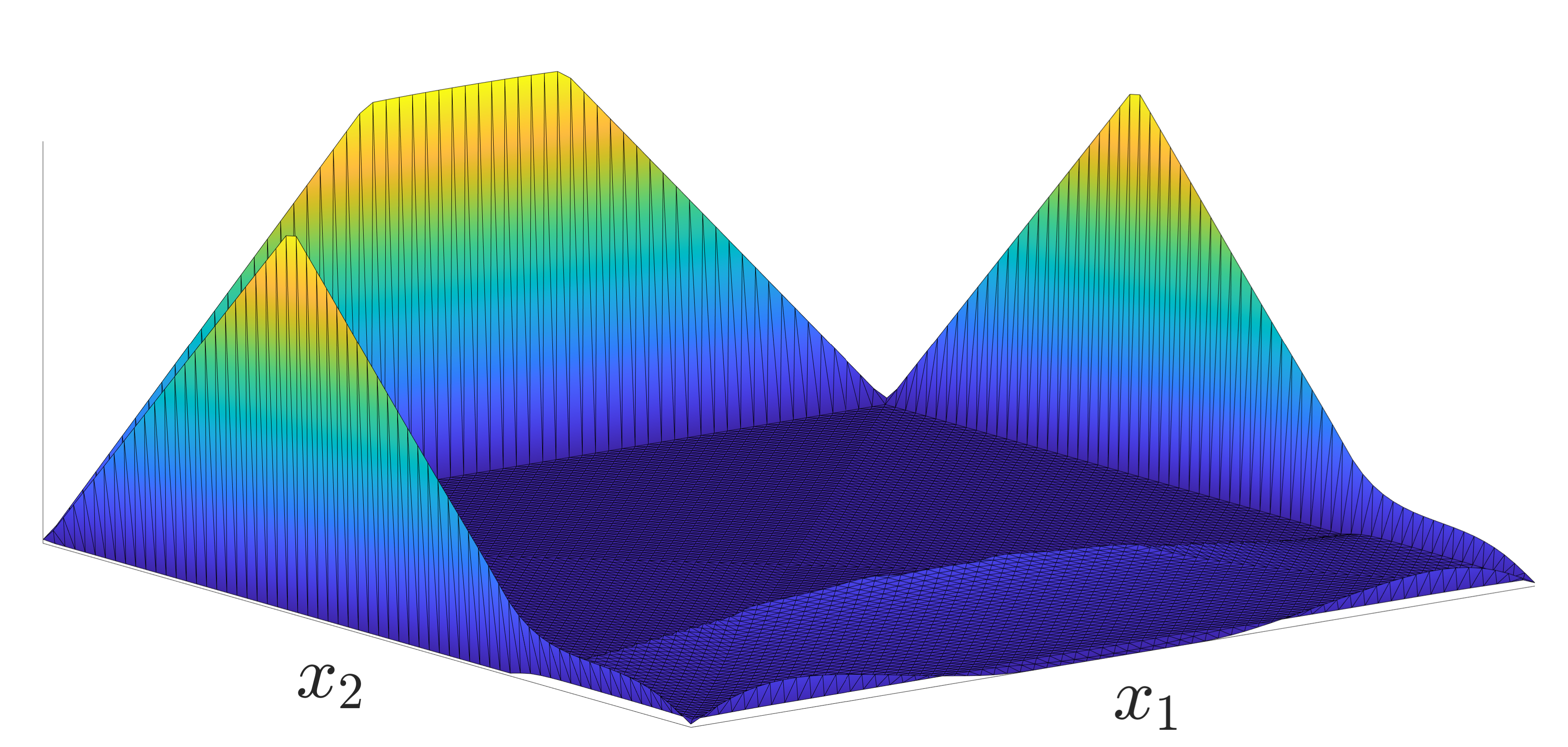}
	\includegraphics[scale=0.09]{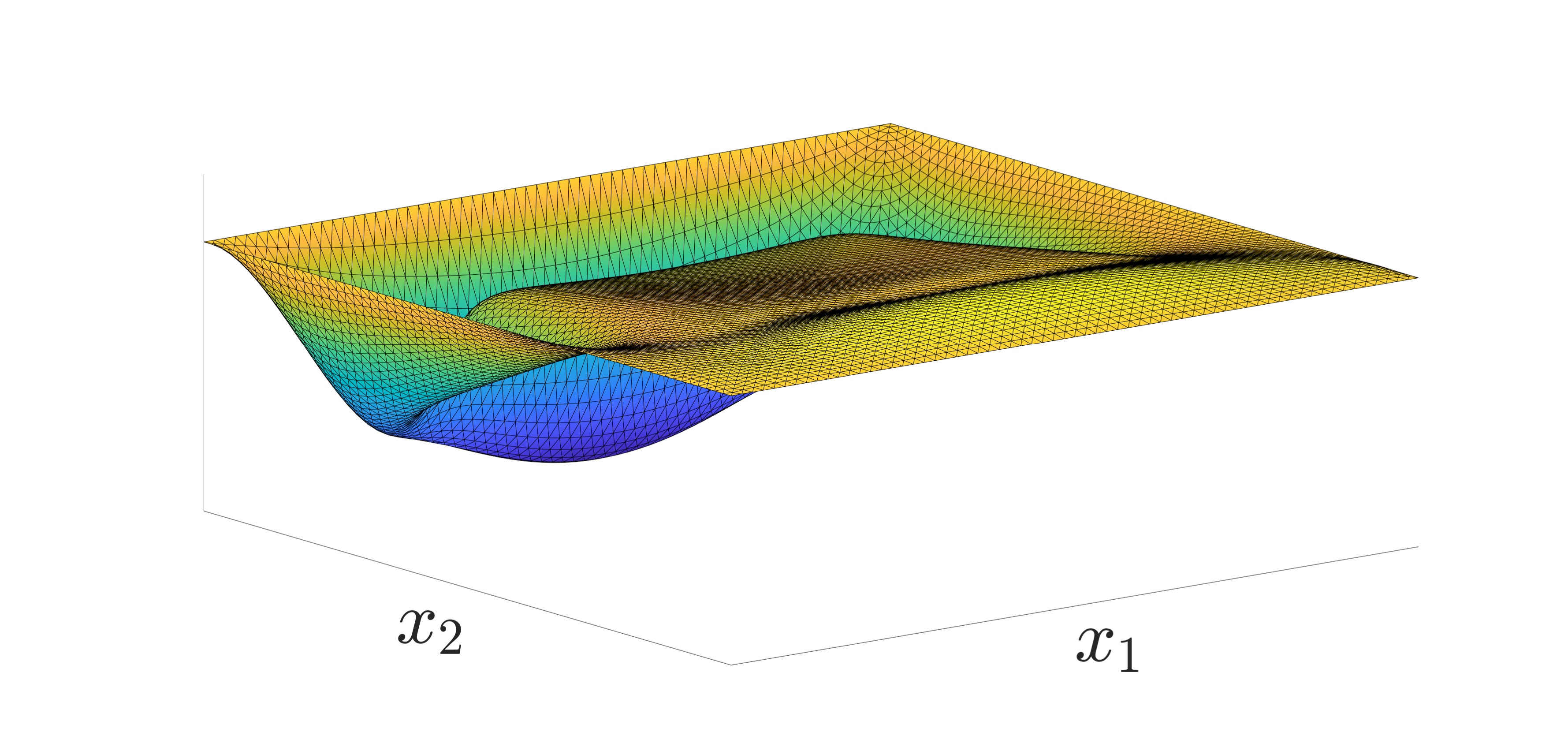}
	\includegraphics[scale=0.09]{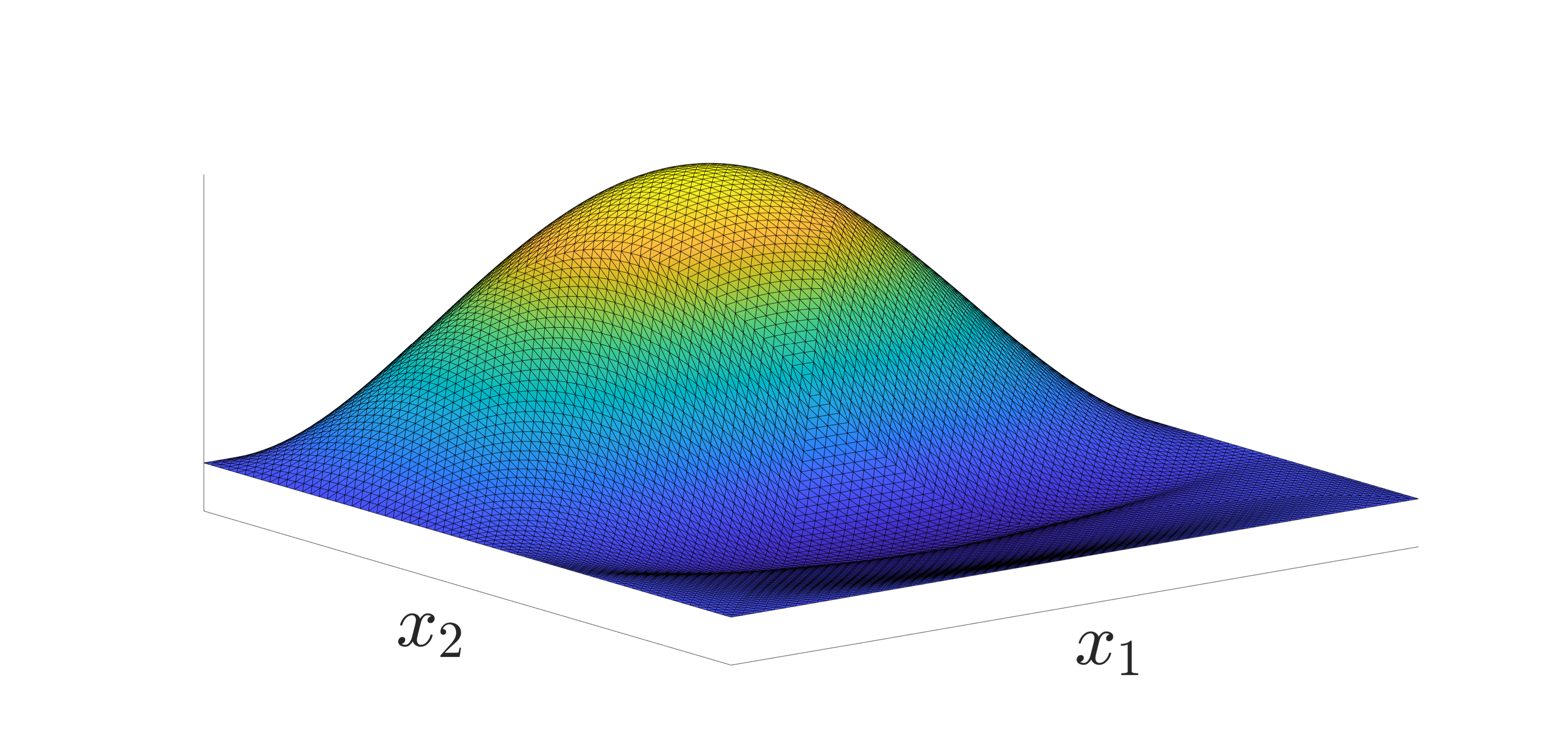}
	\caption{Two local solutions in the nonconvex case.
	The top row shows the optimal control whereas the bottom row shows the residuals.}
	\label{fig:nonconvex}
\end{figure}
The plots in this figure show
the two solutions $\bar u_1$ and $\bar u_2$ (top row)
as well as
the residuals $\bar y_1 - y_{d}$ and $\bar y_2 - y_{d,2}$ (bottom row).

\section*{Acknowledgments}
The authors would like to thank the anonymous referees
for the many helpful remarks.
\bibliographystyle{siamplain}
\bibliography{references}

\end{document}